\documentclass[reqno]{amsart}

\usepackage[utf8]{inputenc}
\usepackage{amssymb, amsmath, amsthm, color, enumerate, mathabx, mathrsfs, mathtools}
\usepackage{bbm}
\usepackage{thmtools}

\usepackage{pdfpages}

\usepackage{tcolorbox}
\usepackage{esint} 
\usepackage{marginnote}

\usepackage{array}

\usepackage{upgreek}

\usepackage{circuitikz}

\usepackage{amsbsy}

\usepackage{hyperref}

\numberwithin{equation}{section}
\theoremstyle{plain}
\newtheorem{theorem}{Theorem}[section]
\newtheorem{lemma}[theorem]{Lemma}
\newtheorem{proposition}[theorem]{Proposition}
\newtheorem{corollary}[theorem]{Corollary}
\newtheorem{conjecture}[theorem]{Conjecture}

\newtheorem*{claim}{Claim}
\theoremstyle{definition}
\newtheorem{definition}[theorem]{Definition}

\newtheorem{remark}[theorem]{Remark}

\RenewCommandCopy{\theHtheorem}{\thetheorem}
\RenewCommandCopy{\theHequation}{\theequation}
\RenewCommandCopy{\theHlemma}{\thelemma}
\RenewCommandCopy{\theHdefinition}{\thedefinition}
\RenewCommandCopy{\theHproposition}{\theproposition}

\newcommand{\C}{\mathbb{C}}

\newcommand{\N}{\mathbb{N}}

\newcommand{\R}{\mathbb{R}}

\newcommand{\T}{\mathbb{T}}

\newcommand{\Z}{\mathbb{Z}}

\newcommand{\cE}{\mathcal{E}}

\newcommand{\cI}{\mathcal{I}}

\newcommand{\cK}{\mathcal{K}}
\newcommand{\cL}{\mathcal{L}}

\newcommand{\cS}{\mathcal{S}}

\newcommand{\fkh}{\mathfrak{h}}

\newcommand{\fkG}{\mathfrak{G}}

\newcommand{\fkN}{\mathfrak{N}}

\newcommand{\fkS}{\mathfrak{S}}

\newcommand{\bdC}{\mathbf{C}}

\newcommand{\bdM}{\mathbf{M}}

\newcommand{\bdell}{\boldsymbol{\ell}}
\newcommand{\bl}{{\bdell}}
\newcommand{\tbdell}{\boldsymbol{\tilde{\ell}}}

\newcommand{\bdone}{\mathbf{1}}

\newcommand{\bdnu}{\boldsymbol{\nu}}

\newcommand{\bdrho}{\boldsymbol{\rho}}

\newcommand{\ud}{\mathrm{d}}

\newcommand{\ceil}[1]{\lceil #1 \rceil }
\newcommand{\supp}[1]{\mathrm{supp}_{#1}\,}

\newcommand{\dist}{\mathrm{dist}}

\newcommand{\Dil}{{\operatorname{Dil}}}

\newcommand{\overtheta}{\overline{\Theta}\vphantom{\Theta}}

\usepackage[normalem]{ulem}

\usepackage{tikz}
\usepackage{circuitikz}
\usepackage{pgfplots}
\pgfplotsset{compat=1.18}

\begin{document}

\title[Space curve restriction for orthonormal functions]{Space curve Fourier restriction for orthonormal functions}

\begin{abstract}
We prove a variant of the $L^2$ Fourier restriction inequality for non-degenerate curves in the setting of orthonormal systems. To achieve this, we develop a Hilbert space approach to studying the $L^2$ theory for Fourier restriction for curves. Our arguments also rely on novel $L^p$-improving bounds for associated convolution operators, which take averages over neighbourhoods of cones generated by the curve. The $L^p$-improving bound are established using a variant of the \textit{high-low} method, in particular using induction-on-scale. 
\end{abstract}

\author[J-C. Cuenin]{ Jean-Claude Cuenin }
\address{Jean-Claude Cuenin: Department of Mathematical Sciences, Loughborough University, Loughborough, Leicestershire, LE11 3TU, United Kingdom}
\email{J.Cuenin@lboro.ac.uk}

\author[J. Denson]{Jacob Denson}
\address{Jacob Denson: School of Mathematics and Maxwell Institute for Mathematical Sciences, James Clerk Maxwell Building, The King’s Buildings, Peter Guthrie Tait
Road, Edinburgh, EH9 3FD, UK}
\email{Jacob.Denson@ed.ac.uk}

\author[J. Hickman]{ Jonathan Hickman }
\address{Jonathan Hickman: School of Mathematics and Maxwell Institute for Mathematical Sciences, James Clerk Maxwell Building, The King's Buildings, Peter Guthrie Tait Road, Edinburgh, EH9 3FD, UK.}
\email{jonathan.hickman@ed.ac.uk}

\maketitle




\section{Introduction}




\subsection{Main result}\label{subsec: main result} We say a $C^\infty$ function $\gamma \colon [-2,2] \to \R^n$ is (a parametrisation of) a \textit{smooth, non-degenerate curve} if
\begin{equation*}
    |\gamma^{(1)}(s) \wedge \cdots \wedge \gamma^{(n)}(s)| \neq 0 \qquad \textrm{for all $s \in [-2,2]$.}
\end{equation*}
Fix $\alpha \in C_c^\infty(\R)$ non-negative and not identically zero with $\supp{} \alpha \subseteq [-1,1]$, and define the \textit{Fourier extension operator}
\begin{equation*}
    \cE_\gamma g(x) \coloneq 
    \int_{\R} e^{2 \pi i x \cdot \gamma(t)} g(t) \alpha(t)\,\ud t \qquad \textrm{for all $g \in L^1(\R) + L^\infty(\R)$.} 
\end{equation*}
In this article, we study Fourier restriction estimates for non-degenerate curves in the setting of orthonormal systems. Defining the exponents
\begin{equation*}
    q_n \coloneq n(n+1) \qquad \textrm{and} \qquad  r_n(q) \coloneq \frac{2q}{n(n+1)} \quad \textrm{for $1 \leq q \leq \infty$,}
\end{equation*}
our main result is the following square function estimate.

\begin{theorem} \label{thm: main}
    Let $n \geq 3$ and $\gamma \colon [-2,2] \to \R^n$ be a smooth, non-degenerate curve. For $q > q_n$ and $1 \leq r < r_n(q)$, the inequality\footnote{Given quantities $A,B \geq 0$, and a family of parameters $\theta$, we write $A \lesssim_\theta B$ or $B \gtrsim_{\theta} A$ to indicate that $A \leq C_{\theta} B$ for a constant $C_{\theta} > 0$ depending only on the dimension $n$ and the parameters $\theta$. We write $A \sim_{\theta} B$ to indicate $A \lesssim_{\theta} B$ and $B \lesssim_{\theta} A$. We let $O_{\theta}(B)$ denote a constant $A$ such that $A \lesssim_\theta B$, though different occurences of $O_\theta(B)$ will denote different constants.}
    \begin{equation}\label{eq: orthonormal Fourier rest}
        \left\|\sum_{j \in J}\nu_j |\cE_\gamma g_j|^2 \right\|_{L^{q/2}(\R^n)} \lesssim_{\gamma,\alpha, q, r} \left( \sum_{j \in J} |\nu_j|^r \right)^{1/r}
    \end{equation}
    holds whenever $(g_j)_{j \in J}$ are orthonormal functions in $L^2(\R)$ and $(\nu_j)_{j \in J}$ are complex coefficients. Here the indexing set $J$ can be taken to be finite or countable.
\end{theorem}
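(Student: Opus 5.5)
The plan is to follow the Frank--Sabin duality framework and reduce \eqref{eq: orthonormal Fourier rest} to a Schatten-class bound. By the standard duality principle, \eqref{eq: orthonormal Fourier rest} with exponent $r$ is equivalent to
\begin{equation*}
    \|W \cE_\gamma \cE_\gamma^* \overline{W}\|_{\fkS^{r'}} \lesssim \|W\|_{L^{2(q/2)'}(\R^n)}^2 \qquad \textrm{for all } W,
\end{equation*}
where $\fkS^{r'}$ is the Schatten class and $\cE_\gamma\cE_\gamma^*$ is convolution with the kernel $K = (\alpha^2 \,\ud t)\circ\gamma^{-1}$ transformed, i.e.\ $\widehat{\alpha^2\sigma_\gamma}$. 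So the real task is a Schatten bound for $W(\widehat{\alpha^2\sigma_\gamma}*\,\cdot\,)\overline{W}$. The endpoint $r=1$ (equivalently $\fkS^\infty$) follows from the classical Drury restriction theorem for $q>q_n$, so it suffices to reach $r$ close to $r_n(q)$ and then interpolate.

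To prove the Schatten bound, I will decompose dyadically: write $\alpha^2 \sigma_\gamma$'s Fourier transform as $\sum_k T_k$, where $T_k$ has kernel $\widehat{\alpha^2\sigma_\gamma}\,\psi(2^{-k}\cdot)$, supported where $|x|\sim 2^k$, with decay $|\widehat{\sigma_\gamma}(x)|\lesssim |x|^{-1/n}$. For each piece I will prove two estimates: an $\fkS^2$ (Hilbert--Schmidt) bound, computed directly as $\|W\|_{L^{p}}^2$-type norms via the kernel's $L^{s}$ size, which is a convolution estimate $\int\int |W(x)|^2|K_k(x-y)|^2|W(y)|^2$; and an $\fkS^\infty$ bound, from the operator norm of multiplication-convolution, which after a Hilbert space / $TT^*$ argument reduces to an $L^2$-restriction estimate localised to frequency scale $2^k$. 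Complex interpolation (Stein's analytic family, inserting $|x|^{z}$-type weights as in Frank--Sabin) between the $\fkS^2$ and $\fkS^\infty$ bounds, followed by summing the dyadic pieces with geometric gain, gives $\fkS^{r'}$ for the claimed range.

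The key point is the $\fkS^2$ piece: $|K_k|^2$ is not just bounded by its pointwise decay, since $\widehat{\sigma_\gamma}$ is only large near the cone generated by the curve (directions normal to osculating flags). So I would bound $|K_k|^2 \lesssim 2^{-2k/n}$ times an average over a $2^k$-scale neighbourhood of the cone $\{\lambda \gamma^{(n)}\text{-normal directions}\}$, with appropriate weights for the intermediate degeneracy levels, and then need $L^p$-improving estimates $L^{p}\to L^{p'}$ for convolution with these cone-neighbourhood averages. These are exactly the $L^p$-improving bounds advertised in the abstract, which I would prove by the high--low method: split the averaging measure into a low-frequency part (controlled by $L^1\to L^\infty$ trivially) and a high-frequency part (controlled by $L^2$ via Fourier decay of the cone measure), with induction on scale to handle the degenerate layers of the cone.

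I expect the main obstacle to be these $L^p$-improving bounds for the cone averages with the sharp numerology matching $r_n(q)=2q/(n(n+1))$; the curved cone is degenerate (flat along rays and with varying curvature near the curve's higher derivatives), so the high-frequency $L^2$ gain is only partial and the induction-on-scale must absorb losses, which is why only the open range $r<r_n(q)$ is obtained.
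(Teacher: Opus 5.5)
There is a genuine gap, and it is in the interpolation step, not in the $L^p$-improving analysis. Your plan for the Hilbert--Schmidt part agrees in substance with the paper. You bound $\iint |W(x)|^2|\cK_\gamma(x-y)|^2|W(y)|^2$ via an $L^p\to L^{p'}$ estimate for convolution with $|\cK_\gamma|^2$. You dominate that kernel by weighted averages over anisotropic neighbourhoods of the cones. You then prove those bounds by a high--low argument with induction on scale.

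The problem is that complex interpolation between $\fkS^2$ and $\fkS^\infty$ can only produce $\fkS^{r'}$ with $2\le r'\le\infty$, that is $r\le 2$. Splitting into dyadic pieces and summing does not change the Schatten index. Since $r_n(q)=2q/(n(n+1))>2$ for every $q>q_n$, your scheme never reaches the range $2<r<r_n(q)$, which for large $q$ is most of the theorem. Concretely, even with the $\fkS^2$ bound at every $q>q_n$ and the Drury $\fkS^\infty$ bound at every $q\ge q_n$, the best you obtain at exponent $q$ is $r=2$.

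The missing ingredient is the trace-class endpoint at $q=\infty$, which is elementary here. Write $W_1U_\gamma W_2=(W_1\cE_\gamma)(\overline{W_2}\cE_\gamma)^*$, where $U_\gamma=\cE_\gamma\cE_\gamma^*$. The operator $W\cE_\gamma$ is Hilbert--Schmidt with kernel of modulus $|W(x)||\alpha(t)|$. H\"older's inequality in Schatten classes then gives
\[
\|W_1U_\gamma W_2\|_{\fkS^1}\lesssim_\alpha\|W_1\|_{L^2}\|W_2\|_{L^2}.
\]
This is the point $(1/q,1/r)=(0,0)$. Your Hilbert--Schmidt bound with $W\in L^{2p}$, $p<p_n$, is the point $(1/q,1/r)=(1/2-1/(2p),1/2)$, which approaches $(1/q_n,1/2)$ as $p\to p_n$. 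Bilinear complex interpolation between these two points traces out the line $r=r_n(q)$ in the limit. Monotonicity of $\ell^r$ norms then fills in $1\le r<r_n(q)$.

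Once you have this endpoint, several pieces of your plan become unnecessary. These are the Drury $\fkS^\infty$ input, the piece-by-piece Schatten interpolation, and the Frank--Sabin analytic family. The paper proves the $\fkS^2$ bound for the whole operator at once. It uses the kernel decomposition and Cauchy--Schwarz to dominate $|\cK_\gamma|^2$ pointwise by $\sum_k k^n\sum_{\bdell}2^{-2\|\bdell\|}\bdone_{\Lambda_\gamma(k;\bdell;C)}$, plus finitely many low-frequency terms. The summation in $k$ is then carried out at the level of the $L^p\to L^{p'}$ estimates, with gain $2^{-\delta(p)k}$.
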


For $q \geq q_n$, classical $L^2$ Fourier restriction for curves, together with the triangle inequality, implies that \eqref{eq: orthonormal Fourier rest} holds for $r = 1$ when $(g_j)_{j \in J}$ are arbitrary $L^2$ normalised functions, not necessarily orthonormal. Without additional assumptions, this bound is sharp, in the sense that the $\ell^1$ norm cannot be replaced with an $\ell^r$ norm for any $r > 1$. For $q > q_n$, Theorem~\ref{thm: main} implies such bounds can be improved when the functions $(g_j)_{j \in J}$ are assumed to be orthonormal. In particular, in \eqref{eq: orthonormal Fourier rest} we have a smaller $\ell^r$ norm for $1 < r < r_n(q)$ on the right-hand side. This shows that the extension operator preserves orthogonality, in a quantitative $L^q$ sense.

In \S\ref{sec: nec}, we prove that if \eqref{eq: orthonormal Fourier rest} holds for some exponents $q$ and $r$, then necessarily $q \geq q_n$ and $r \leq q/2n$. Consequently, Theorem~\ref{thm: main} is optimal up to endpoints when $n = 3$. For $n \geq 4$, stronger estimates may hold. In lieu of evidence to the contrary, we conjecture this to be the case.

\begin{conjecture}\label{conj: CDH} 
Let $n \geq 2$ and $\gamma \colon [-2,2] \to \R^n$ be a smooth, non-degenerate curve. For $q \geq q_n$ and $1 \leq r \leq q/2n$, the inequality
\begin{equation}\label{eq: ON Fourier rest 2}
    \left\|\sum_{j \in J}\nu_j |\cE_{\gamma} g_j|^2 \right\|_{L^{q/2}(\R^n)} \lesssim_{\gamma, \alpha} \left( \sum_{j \in J} |\nu_j|^r \right)^{1/r}
\end{equation}
holds whenever $(g_j)_{j \in J}$ are orthonormal functions in $L^2(\R)$ and $(\nu_j)_{j \in J}$ are complex coefficients. Here the indexing set $J$ can be taken to be finite or countable.
\end{conjecture}

Conjecture~\ref{conj: CDH} is known for $n=2$ due to Frank--Sabin~\cite{FS2017}. For $n \geq 3$, Theorem~\ref{thm: main} verifies the bound \eqref{eq: ON Fourier rest 2} for the more restrictive range $q > q_n$ and $r < r_n(q)$. In particular,  when $n = 3$ this proves Conjecture~\ref{conj: CDH} up to endpoints. For $n \geq 4$, there is a significant gap between Theorem~\ref{thm: main} and Conjecture~\ref{conj: CDH}: see Figure~\ref{fig: type}. 

\begin{figure}
    \centering
    \includegraphics[width=0.6\linewidth]{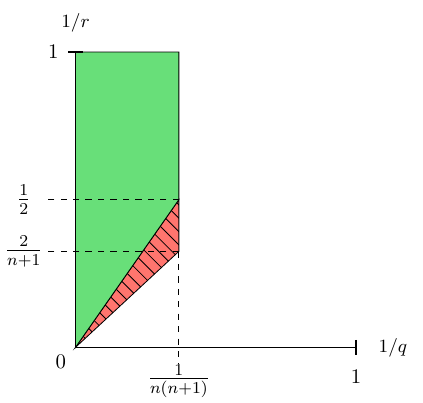}
    \caption{Type diagram for estimates considered. The shaded region depicts the bounds of Theorem~\ref{thm: main}, and the hatched region the bounds conjectured in Conjecture~\ref{conj: CDH}. The main difficulty in extending our results is obtaining bounds with $2 < r < \infty$ when $n \geq 4$ without recourse to interpolation between $r = 2$ and $r = \infty$.}
    \label{fig: type}
\end{figure}

The main difficulty in proving further estimates essentially arises from the fact that the critical exponent
\begin{equation*}
    r_n \coloneq \frac{q_n}{2n} = \frac{n+1}{2}
\end{equation*}
in Conjecture~\ref{conj: CDH} satisfies $r_n > 2$ for $n \geq 4$. As discussed in \S\ref{subsec: methodology} below, our approach follows a duality argument of Frank--Sabin \cite{FS2017} which recasts the problem in terms of Schatten norm estimates. Our arguments are based on proving Hilbert--Schmidt norm bounds in the dual setting. For $n \geq 4$, proving \eqref{eq: ON Fourier rest 2} for $2q/n(n+1) < r \leq q/2n$, once dualised, requires Schatten $r'$-norm bounds for some $1 < r' < 2$. These bounds do not appear to follow via interpolation between Hilbert--Schmidt and trace norm bounds. This is challenging, since obtaining Schatten $r'$-norm bounds for $1 < r' < 2$ directly, without the use of interpolation, typically requires genuinely spectral techniques. 



\subsection{History and motivation for the problem} The estimates considered here are part of a line of research originating in the work of Lieb and Thirring \cite{LiebThirring1975,LiebThirring1976}. These ideas first appeared in the context of Fourier restriction / Strichartz estimates through the work of Frank--Lewin--Lieb--Seiringer \cite{MR3254332} and by Frank--Sabin \cite{FS2017}.

Frank--Sabin \cite{FS2017} established the orthonormal systems analogue of the Stein--Tomas theorem for compact hypersurfaces of nonvanishing Gaussian curvature. They obtained sharp estimates and established several applications of their results to PDE and spectral theory. Furthermore, Frank--Sabin \cite{FS2017} introduced a number of general tools for proving estimates involving orthonormal systems, including a duality principle which reformulates the problem in the setting of Schatten spaces. 

The initial works \cite{MR3254332,FS2017} inspired a large body of literature on Strichartz estimates for orthonormal systems, including \cite{MR3250975,MR3985036,MR4125984,MR4068269,MR4202486,MR4833971,MR4725147}. Furthermore, variable coefficient versions of the Stein--Tomas theorem have been studied in this context in, for example, \cite{MR3682666,MR4656689,cuenin2025spectralclusterboundsorthonormal,jian2025sharpspectralclusterrestrictionbounds}. However, as far as the authors are aware, hitherto orthonormal Fourier restriction has not been considered for space curves. 

Fourier restriction theory for curves has developed in parallel to that for hypersurfaces, although the former is much better understood than the latter in higher dimensions. The full Fourier restriction conjecture for non-degenerate curves was established by Drury~\cite{Drury1985}, building upon earlier work of Prestini~\cite{Prestini1978} and Christ~\cite{Christ1985}. More recently, the theory has seen a resurgence in interest, due to its intimate connection with certain problems in number theory, including Vinogradov's mean value theorem \cite{BDG2016} and estimates for the Riemann zeta function along the critical line \cite{Bourgain2017}.

One motivation for Theorem~\ref{thm: main} arises from the method of proof. The $L^2$ Fourier restriction theory for hypersurfaces relies on Hilbert space techniques, in particular the $T^*T$ method developed by Stein~\cite{Stein1984} and Tomas~\cite{Tomas1975}. By contrast, to the best of our knowledge, Hilbert space techniques have not been effectively exploited in the study of Fourier restriction for curves (see \cite{CFdO} for partial results). Instead, primarily multilinear techniques~\cite{Prestini1978, Christ1985, Drury1985} are used, which yield estimates beyond $L^2$ and are therefore fundamentally different from the Stein--Tomas approach. However, the restriction problem for orthonormal functions is inherently $L^2$-based. Consequently, proving Theorem~\ref{thm: main} requires developing a Hilbert space approach to Fourier restriction for curves. Theorem~\ref{thm: main} implies optimal $L^2 \to L^q$ bounds for $\cE_\gamma$ away from the endpoint, by considering the special case of a single function rather than a collection of orthonormal functions, and thus also provides such a Hilbert space approach to the classical $L^2$ restriction theorem for curves. We explore this topic further in a companion paper~\cite{CDH}.

A second motivation for studying Theorem~\ref{thm: main} is that the bounds for Fourier restriction to curves exhibit qualitatively new behaviour relative to Fourier restriction to curved hypersurfaces. This introduces new considerations, related to the discussion at the end of \S\ref{subsec: main result}, which are not present in the study of orthonormal systems for Fourier restriction to curved hypersurfaces. Much of our arguments centre around Hilbert--Schmidt bounds for certain auxiliary operators, arising from the duality argument used to prove Theorem~\ref{thm: main}. In the work of Frank--Sabin~\cite{FS2017}, Hilbert--Schmidt bounds also play a role, but follow from elementary arguments. By contrast, here the Hilbert--Schmidt bounds are deeper and are obtained as a consequence of certain $L^p$-improving inequalities for convolution operators. The $L^p$-improving estimates are novel, and their proof constitutes the main part of our argument. We discuss this further in the next subsection.




\subsection{Methodology}\label{subsec: methodology} The proof of Theorem~\ref{thm: main} follows the general framework introduced by Frank--Sabin~\cite{FS2017}. Using the duality principle from \cite{FS2017}, we reformulate Theorem~\ref{thm: main} in terms of Schatten norm bounds for the operator $\cE_\gamma (\cE_\gamma)^*$. The main novelty of the paper is our analysis of the Schatten bounds. There are three main features of our argument:\medskip

\noindent \textit{1. Reduction to $L^p$-improving estimates}. Our argument reduces Theorem~\ref{thm: main} to the study of optimal Hilbert--Schmidt norm estimates associated with $\cE_\gamma (\cE_\gamma)^*$. The operator $\cE_\gamma (\cE_\gamma)^*$ is given by convolution against $\cK_{\gamma} \coloneq \widecheck{\sigma}_{\gamma}$, where $\sigma_{\gamma}$ is a smooth measure on the underlying curve $\gamma$ and $\smash{\widecheck{\sigma}_{\gamma}}$ denotes its inverse Fourier transform. In Lemma~\ref{lem: Lp improv red}, we show the desired Hilbert--Schmidt bounds arise from $L^p \to L^{p'}$ estimates for
\begin{equation*}
    T_{\gamma} f \coloneq |\cK_{\gamma}|^2 \ast f.
\end{equation*} 
This reduction is elementary, yet the relevance of $L^p$-improving estimates is a new and interesting feature of the problem compared with the hypersurface case \cite{FS2017}.\medskip

\noindent \textit{2. $L^p$-improving estimates: decomposition of the kernel}. It remains to bound the $L^p \to L^{p'}$ operator norm for $T_{\gamma}$. At its heart, these bounds rely on a rather intricate analysis that exploits the \emph{direction-sensitive} Fourier decay of the measure $\sigma_\gamma$.

Because $\gamma$ is non-degenerate, van der Corput's Lemma implies the uniform bound 
\begin{equation}\label{eq: slow decay}
    |\widecheck{\sigma}_{\gamma}(x)| \lesssim (1+|x|)^{-1/n} \qquad \textrm{for all $x \in \R^n$.}
\end{equation} 
The estimate is sharp, as there is a cone of points $x \in \R^n$ where the reverse inequality holds for $|x|$ large. Yet naive application of this uniform bound does not lead to sharp results. This is because the cone of points where \eqref{eq: slow decay} is sharp is only two-dimensional. For most $x \in \R^n$, the significantly stronger decay estimate 
\begin{equation*}
    |\widecheck{\sigma}_\gamma(x)| \lesssim (1 + |x|)^{-1/2}
\end{equation*}
holds. More generally, for each $2 \leq m \leq n-1$ there is an $n+1-m$ dimensional cone $\Gamma_m \subseteq \R^n$ such that
\begin{equation*}
   |\widecheck{\sigma}_\gamma(x)| \lesssim (1 + |x|)^{-1/m} 
\end{equation*}
holds away from $\Gamma_m$.
 
To obtain more effective bounds, we carefully decompose $\R^n$ into different regions associated to the cones $\Gamma_2,\dots,\Gamma_{n-1}$, and thereby according to the decay of $\widecheck{\sigma}_{\gamma}$. In particular, we use a pointwise variant of a decomposition argument developed in \cite{PS2007, KLO2023, GovindanSheri2024} and related works to study averaging operators associated to non-degenerate curves. \medskip

\noindent \textit{3. $L^p$-improving estimates: a high-low argument.} The argument alluded to above induces a decomposition of the kernel $\smash{\cK_\gamma}$ into pieces supported on various anisotropic neighbourhoods of the cones discussed above. We thus reduce matters to studying $L^p \to L^{p'}$ bounds for averaging operators associated to these neighbourhoods.

Here we exploit the curvature of the cones to obtain an $L^p$-improving estimate in the spirit of Christ~\cite{Christ1998}. The work of Christ~\cite{Christ1998} introduced powerful combinatorial methods for establishing $L^p$-improving estimates. However, owing to the anisotropy of our setup, theses methods do not easily adapt to our problem. Instead, we rely on Fourier analysis. This is possible partly because we are only interested in bounds on the line of duality: that is, we only require $L^p \to L^{p'}$ bounds rather than more general $L^p \to L^q$ bounds. Such bounds are more amenable to Fourier analytic methods, as they may be obtained by interpolating between $L^2 \to L^2$ estimates (proved using Fourier analysis) and $L^1 \to L^\infty$ estimates (proved by direct analysis of the kernel).

Our proof employs a variant of the \emph{high-low method} (see, for instance, \cite{GWZ2020}), which has seen many recent applications in the harmonic analysis literature. The method involves breaking the operator into high and low frequency components. The high frequency component can be estimated directly using Plancherel's theorem and certain disjointness properties of the kernel's frequency support. The low frequency component can be estimated either trivially or via an appeal to induction-on-scale. We refer to \S\ref{sec: Lp improv III} for the full details.




\subsection*{Acknowledgements} The first author is supported by New Investigator Award EP/X011488/1. The second and third authors are supported by New Investigator Award UKRI097.

\subsection*{Tool and Computational Resource Disclosure} Very minor use of AI tools were made to help address \LaTeX\ formatting issues.



\section{Spectral theory and reduction to \texorpdfstring{$L^p$-improving}{improving} estimates} 

To prove \eqref{eq: orthonormal Fourier rest}, we use duality to convert the inequality into certain \emph{Schatten space} bounds. Thus, we begin by recalling relevant results and definitions from spectral theory.




\subsection{Schatten norms} Recall that for two Hilbert spaces $\fkh_1$ and $\fkh_2$, the \emph{singular values} of a compact operator $A: \fkh_1 \to \fkh_2$ are the eigenvalues of the non-negative operator $(AA^*)^{1/2}$. Since $A$ is compact, the eigenvalues of $(AA^*)^{1/2}$ form a countable set of non-negative numbers accumulating only at the origin. For $m \in \N$, we let $s_m(A)$ denote the $m$th largest singular value of $A$, repeated up to multiplicity.

For $A$ as above and $1\leq  p < \infty$, we define the $\fkS^p$ \emph{Schatten norm}
\begin{equation*}
  \| A \|_{\fkS^p(\fkh_1, \fkh_2)} \coloneq \Bigg( \sum_{m = 1}^\infty s_m(A)^p \Bigg)^{1/p}.  
\end{equation*}
We also let
\begin{equation*}
 \| A \|_{\fkS^{\infty}(\fkh_1, \fkh_2)} \coloneq \max_{m \in \N} s_m(A) = s_1(A) = \| A \|_{\fkh_1 \to \fkh_2},    
\end{equation*}
where the last norm denotes the operator norm of $A$.
For $1 \leq p \leq \infty$, we then let $\fkS^p(\fkh_1,\fkh_2)$ denote the space of all compact operators $A \colon \fkh_1 \to \fkh_2$ for which $\|A\|_{\fkS^{p}(\fkh_1,\fkh_2)} < \infty$. For a Hilbert space $\fkh$, we also define $\fkS^{p}(\fkh) \coloneq \fkS^{p}(\fkh,\fkh)$.




\subsection{Hilbert--Schmidt integral operators}\label{subsec: HS} For $m$, $n \in \N$, we recall that the Schatten space $\fkS^2(L^2(\R^m),L^2(\R^n))$ is precisely the space of Hilbert--Schmidt operators. That is, a bounded operator $T \colon L^2(\R^m) \to L^2(\R^n)$ is an element of $\fkS^2(L^2(\R^m),L^2(\R^n))$ if and only if there exists a function $\cK \in L^2(\R^n \times \R^m)$ which is an \emph{integral kernel} for $T$, in the sense that
\begin{equation*}
    Tf(x) = \int_{\R^m} \cK(x,y) f(y)\, \ud y \qquad \textrm{for all $f \in L^2(\R^m)$.}
\end{equation*}
Here the integral is well defined for almost every $x \in \R^n$ by Fubini's theorem, uniquely specifying an element of $L^2(\R^n)$ by H\"{o}lder's inequality. The function $\cK$ is necessarily unique, and 
\begin{equation*}
  \| T \|_{\fkS^2(L^2(\R^m),L^2(\R^n))} = \| \cK \|_{L^2(\R^n \times \R^m)}.  
\end{equation*}
In particular, one may calculate the $\fkS^2(L^2(\R^m), L^2(\R^n))$ norm of an operator by producing an integral kernel for the operator and taking its $L^2(\R^n \times \R^m)$ norm. See \cite[pp. 108-111]{GK_book} or \cite[Thm. 3.8.5]{MR3364494} for details.




\subsection{Duality and interpolation}\label{subsec: duality and interpolation} Suppose $2 \leq q \leq \infty$, and define $p \in [1,\infty]$ by the H\"older relationship $1/(2p) + 1/q = 1/2$. Any $W \in L^{2p}(\R^n)$ then defines a bounded operator from $L^q(\R^n)$ to $L^2(\R^n)$ acting by pointwise multiplication, with operator norm $\| W \|_{L^{2p}(\R^n)}$. Abusing notation, we also denote this operator by $W$. Thus, it follows that
\begin{equation*}
   \|\overline{W}\|_{L^2(\R^n) \to L^{q'}(\R^n)} = \|W\|_{L^q(\R^n) \to L^2(\R^n)} = \|W\|_{L^{2p}(\R^n)} \quad \textrm{for $W \in L^{2p}(\R^n)$.}
\end{equation*}
We now recall a duality principle of Frank--Sabin~\cite{FS2017}.

\begin{proposition}[{\cite[Lemma 3]{FS2017}}]\label{prop: duality principle}
    Let $1 \leq p$, $q$, $r \leq \infty$ satisfy $1/(2p) + 1/q = 1/2$ and $\fkh$ be a complex separable Hilbert space. For a bounded operator $A \colon \fkh \to L^q(\R^n)$ and $C > 0$, the following are equivalent:
    \begin{enumerate}[\normalfont(a)]
        \item For any orthonormal elements $(u_j)_{j\in J}$ of $\fkh$ and any complex sequence $(\nu_j)_{j\in J}$, we have
        \begin{equation*}
            \Bigg\| \sum_{j \in J}\nu_j |Au_j|^2 \Bigg\|_{L^{q/2}(\R^n)} \leq C \Bigg( \sum_{j \in J} |\nu_j|^r \Bigg)^{1/r}.  
        \end{equation*}

        \item For any $W \in L^{2p}(\R^n)$, we have
        \begin{equation*}
            \| W  AA^*  \overline{W}\|_{\fkS^{r'}(L^2(\R^n))}\leq C \|W\|_{L^{2p}(\R^n)}^2.
        \end{equation*}
    \end{enumerate}
\end{proposition}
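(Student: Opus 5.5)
The plan is the standard duality argument of Frank--Sabin: express the left-hand side of (a) as a trace against a test function, then rewrite that trace as the trace of $W AA^* \overline{W}$ against the positive operator $\gamma := \sum_j \nu_j |u_j\rangle\langle u_j|$. Throughout, $q' $ and $(q/2)'$ are the usual conjugates. The H\"older relation gives $1/(2p)+1/q=1/2$, equivalently $(q/2)' = p$. Consequently, the dual of $L^{q/2}$ is $L^{p}$, and a test function $V \in L^{p}$ can be written as $V = |W|^2$, or more generally $V = W_1 \overline{W_2}$, with $W_i \in L^{2p}$.

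\textbf{(b) $\Rightarrow$ (a).} First assume $J$ is finite and all $\nu_j$ are nonzero; the general case follows by monotone convergence and splitting $\nu$ into real and imaginary, positive and negative parts (or by handling complex $V$ directly). For $V \in L^{p}$, write $V = W\overline{W'}$ with $|W| = |W'| = |V|^{1/2}$ and the phase placed on $W'$. Then
\[ \int V \sum_j \nu_j |Au_j|^2 = \sum_j \nu_j \langle \overline{W'} A u_j, \overline{W} A u_j\rangle = \operatorname{Tr}\bigl(\overline{W} A \gamma A^* W'\bigr). \]
Using the cyclic structure, this equals $\sum_j \nu_j \langle A^* W' \,\cdot\,, \ldots\rangle$. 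More cleanly: set $B = \overline{W} A \colon \fkh \to L^2$. Then $BB^* = \overline{W}AA^*W$, and the trace is $\operatorname{Tr}(B\gamma B'^*)$ with $B' = \overline{W'}A$.

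Apply H\"older's inequality for Schatten norms: $|\operatorname{Tr}(B\gamma B'^*)| \le \|\gamma\|_{\fkS^r}\|B'^* B\|_{\fkS^{r'}}$, where $\|\gamma\|_{\fkS^r} = \|\nu\|_{\ell^r}$ by orthonormality. To bound $\|B'^*B\|_{\fkS^{r'}}$, use $\|B'^*B\|_{\fkS^{r'}} \le \|B'\|_{\fkS^{2r'}}\|B\|_{\fkS^{2r'}}$ together with $\|B\|_{\fkS^{2r'}}^2 = \|BB^*\|_{\fkS^{r'}} \le C\|W\|_{L^{2p}}^2$ from (b); the conjugation convention there is harmless. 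This yields $|\int V \sum_j \nu_j|Au_j|^2| \le C\|V\|_{L^p}\|\nu\|_{\ell^r}$, and duality $L^{p} = (L^{q/2})'$ gives (a).

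\textbf{(a) $\Rightarrow$ (b).} The operator $W AA^*\overline{W} = BB^*$ with $B = WA$ is nonnegative, so its $\fkS^{r'}$ norm equals $\sup \operatorname{Tr}(BB^* \gamma)$ over finite-rank nonnegative $\gamma = \sum_j \nu_j |v_j\rangle\langle v_j|$ on $L^2(\R^n)$ with $\|\nu\|_{\ell^r} \le 1$. By cyclicity, $\operatorname{Tr}(BB^*\gamma) = \operatorname{Tr}(B^*\gamma B)$, and $B^*\gamma B$ is a nonnegative operator on $\fkh$. Diagonalize it as $\sum_k \mu_k |u_k\rangle\langle u_k|$ with $(u_k)$ orthonormal.

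At this point the argument gets awkward. Instead I will use the known form of Frank--Sabin's argument: $\operatorname{Tr}(\gamma WAA^*\overline{W}) = \operatorname{Tr}(A^*\overline{W}\gamma W A)$. I have not checked this step, and it is where I expect the main care to be needed; the cleaner route is to dualize via $\fkS^{r'} = (\fkS^r)'$ applied to the operator $\gamma$ acting on $\fkh$ instead. Concretely, write
\[ \|BB^*\|_{\fkS^{r'}} = \|B^*B\|_{\fkS^{r'}(\fkh)} = \sup_{\gamma} \operatorname{Tr}(\gamma B^* B), \]
where the supremum runs over $\gamma = \sum_j \nu_j |u_j\rangle\langle u_j|$ on $\fkh$ with $(u_j)$ orthonormal in $\fkh$ and $\|\nu\|_{\ell^r}\le1$. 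This is legitimate since $B^*B$ is nonnegative; restrict to finite rank and pass to the limit. Then
\[ \operatorname{Tr}(\gamma B^*B) = \sum_j \nu_j \|WAu_j\|_2^2 = \int |W|^2 \sum_j \nu_j |Au_j|^2. \]
H\"older with $(q/2)' = p$ bounds this by $\|W\|_{L^{2p}}^2 \cdot C\|\nu\|_{\ell^r}$ using (a), which proves (b).

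The main technicalities are the endpoint cases ($r=\infty$ or $p=\infty$), where one uses compactness or a limiting argument, and justifying the finite-rank reductions. The core of the proof is the identity $\operatorname{Tr}(\gamma B^*B) = \int |W|^2\rho_\gamma$ combined with Schatten duality.
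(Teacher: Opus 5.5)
Your final argument is correct and is essentially the Frank--Sabin proof, which the paper cites rather than reproduces: Schatten--H\"older applied to $\operatorname{Tr}(\gamma B'^*B)$ with $V = W\overline{W'}$ gives (b)$\Rightarrow$(a). For (a)$\Rightarrow$(b) you combine $\|BB^*\|_{\fkS^{r'}} = \|B^*B\|_{\fkS^{r'}(\fkh)}$, $\fkS^r$--$\fkS^{r'}$ duality on $\fkh$, and $\operatorname{Tr}(\gamma B^*B) = \int |W|^2 \sum_j \nu_j |Au_j|^2$.

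One point should be made explicit. For $r'<\infty$, the finite-rank supremum bound must also be shown to force $B^*B$ to be compact. Otherwise some spectral subspace $\bdone_{[\epsilon,\infty)}(B^*B)\fkh$ is infinite-dimensional, and testing against $m$ orthonormal vectors in it with $\nu_j = m^{-1/r}$ gives $\operatorname{Tr}(\gamma B^*B)\geq \epsilon m^{1/r'}\to\infty$, contradicting (a).

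You should also drop the abandoned detours, including the trace identity you flagged as unchecked: it is just cyclicity of the trace, and your final argument does not use it.
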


In this paper, we apply Proposition~\ref{prop: duality principle} with $\fkh = L^2(\R)$ and with $A = \cE_\gamma$. Recall that $q_n \coloneq n(n+1)$ is the exponent in Theorem~\ref{thm: main}. Define the operator $U_\gamma$ and exponent $p_n$ by
%
%
\begin{equation}\label{eq: U gamma}
    U_\gamma \coloneq \cE_\gamma (\cE_\gamma)^*  \qquad \textrm{and} \qquad p_n' \coloneq \frac{n(n+1)}{2}.
\end{equation}
We note then that
\[ \frac{1}{2p_n} + \frac{1}{q_n} = \frac{1}{2}. \]
Proposition~\ref{prop: duality principle} may be combined with an interpolation argument to reduce the proof of Theorem~\ref{thm: main} to the following theorem.

\begin{theorem}\label{thm: endpoint}
    For $n \geq 3$ and $1 \leq p < p_n$, the inequality
    \begin{equation}\label{eq: endpoint}
        \| W_1 U_\gamma W_2 \|_{\fkS^2(L^2(\R^n))} \lesssim_{\gamma, \alpha} \| W _1\|_{L^{2p}(\R^n)}\| W_2 \|_{L^{2p}(\R^n)}
    \end{equation}
    holds whenever $W_1$, $W_2 \in L^{2p}(\R^n)$.
\end{theorem}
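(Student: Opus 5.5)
The plan is to compute the Hilbert--Schmidt norm via the integral kernel, as in \S\ref{subsec: HS}. The operator $U_\gamma = \cE_\gamma(\cE_\gamma)^*$ is convolution with $\cK_\gamma = \widecheck{\sigma}_\gamma$, where $\ud\sigma_\gamma$ is the pushforward of $\alpha(t)^2\,\ud t$ under $\gamma$. Hence $W_1 U_\gamma W_2$ has kernel $W_1(x)\cK_\gamma(x-y)W_2(y)$, and
\begin{equation*}
\| W_1 U_\gamma W_2 \|_{\fkS^2}^2 = \int_{\R^n}\int_{\R^n} |W_1(x)|^2 |\cK_\gamma(x-y)|^2 |W_2(y)|^2 \,\ud x\,\ud y = \langle |W_1|^2, T_\gamma |W_2|^2\rangle .
\end{equation*}
By H\"older's inequality, \eqref{eq: endpoint} therefore follows from the $L^p$-improving estimate
\begin{equation*}
\| T_\gamma f\|_{L^{p'}(\R^n)} \lesssim \|f\|_{L^p(\R^n)} \qquad \textrm{for } 1\le p<p_n,
\end{equation*}
applied with $f=|W_2|^2$ and paired against $|W_1|^2 \in L^p$. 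This is the reduction recorded as Lemma~\ref{lem: Lp improv red}. Since $p=1$ is trivial ($|\cK_\gamma|^2\le C$ gives $L^1\to L^\infty$), interpolation reduces everything to $p$ close to $p_n$.

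To bound $T_\gamma$, I will decompose the kernel $|\cK_\gamma|^2$ according to direction-sensitive decay. First I split dyadically in $|x|\sim 2^k$. Then, inside each shell, I use a pointwise version of the Pramanik--Seeger / Ko--Lee--Oh decomposition. This splits the shell into anisotropic neighbourhoods of the cones $\Gamma_m$, $2\le m\le n$, on each of which $|\cK_\gamma(x)|^2\lesssim 2^{-2k/m}$ times an appropriate gain. Concretely, one parametrises $x$ by the point $s$ where the phase $x\cdot\gamma(t)$ degenerates to order $m$. The region where $|x\cdot\gamma^{(j)}(s)|\lesssim \delta_j$ for $j<m$ is then essentially a union, over a $2^{-k\beta}$-separated family of $s$, of boxes adapted to the Frenet frame of $\gamma$ at $s$. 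On each piece, the kernel is bounded by the van der Corput bound of order $m$ together with stationary-phase improvements. I will write $|\cK_\gamma|^2\lesssim\sum_{k,m,\vec\delta} c_{k,m,\vec\delta}\,\chi_{k,m,\vec\delta}$, where $\chi$ is the indicator of such a neighbourhood of a cone-sheet and $c$ the corresponding decay.

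For each piece I will prove an $L^p\to L^{p'}$ bound by interpolating two estimates:
\begin{enumerate}[(i)]
\item the $L^1\to L^\infty$ bound $c_{k,m,\vec\delta}$;
\item an $L^2\to L^2$ bound $c_{k,m,\vec\delta}\|\widehat{\chi}\|_\infty$.
\end{enumerate}
The size of $\|\widehat{\chi}\|_\infty$ is the crux. Trivially it is the volume of the neighbourhood, which is too large. The gain must come from curvature of the cone as $s$ varies. Here I will use the high--low method. Write $\chi=\sum_s \chi_s$, a sum of boxes $B_s$. Take a frequency cutoff at a scale $K$ adapted to the box dimensions.
\begin{itemize}
\item The high-frequency parts $\widehat{\chi_s}$ live on essentially disjoint dual slabs because the Frenet frames rotate. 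By Plancherel and almost orthogonality, they contribute only about $\max_s|B_s|$ in operator norm, rather than $\sum_s|B_s|$.
\item The low-frequency part is $\chi$ convolved with a bump at a coarser scale. This is comparable to a neighbourhood of the same cone at coarser parameters, so it can be handled by induction on scale (or trivially at the base scale).
\end{itemize}
Summing the resulting bounds over $\vec\delta$, $m$ and $k$ must give a geometric series exactly when $p<p_n$. The endpoint exponent $p_n'=n(n+1)/2$ should emerge from the $m=n$ (slowest-decay, two-dimensional cone) piece, and the strict inequality absorbs logarithmic losses.

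The main obstacle is step (ii): establishing the sharp $L^2$ bound for the averaging operators over anisotropic cone neighbourhoods, and closing the induction. In particular, the high-frequency disjointness requires a quantitative transversality of the dual slabs indexed by $s$, derived from non-degeneracy via the Frenet frame. The low-frequency term must also reproduce a neighbourhood of the same type, with an acceptable constant, so that the induction does not accumulate losses. I would first carry this out for the moment curve $\gamma(t)=(t,t^2/2,\dots,t^n/n!)$, where the box structure is explicit, and then perturb to general non-degenerate $\gamma$ by local approximation on short intervals.
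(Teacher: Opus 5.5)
Your proposal follows essentially the same route as the paper. You reduce via the Hilbert--Schmidt kernel to $L^p\to L^{p'}$ bounds for $T_\gamma$ (Lemma~\ref{lem: Lp improv red}), split the kernel by a pointwise Pramanik--Seeger/Ko--Lee--Oh decomposition into pieces $\cK_\gamma[a^{k,\bdell}]$ of size $O(2^{-\|\bdell\|})$ supported on unions of plates, and bound the resulting averaging operators by a high--low argument: Plancherel plus dual-plate overlap at high frequency, direct kernel bounds at low frequency, and induction on scale into coarser plate families in between, with the polynomial-in-$k$ losses absorbed by $p<p_n$.

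The only divergence is your proposed detour through the moment curve. The paper never needs it, because every geometric lemma (plate separation, dual-plate overlap, rescaling) is proved directly for the quantitative class $\fkG(B)$ by Taylor expansion. Working in that class is also what a correct version of your route would require, since a naive local perturbation from the moment curve is not uniform across frequency scales.
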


\begin{proof}[Proof (of Theorem~\ref{thm: main}, assuming Theorem~\ref{thm: endpoint})] Let $W_1$, $W_2 \in L^2(\R^n)$. Since 
\begin{equation*}
   W_1 U_\gamma W_2 = (W_1 \cE_\gamma) (\overline{W_2}\cE_\gamma)^*, 
\end{equation*}
a variant of H\"older's inequality in Schatten spaces (see \cite[Theorem 2.8]{MR2154153}) implies
\begin{equation*}
    \| W_1 U_\gamma W_2 \|_{\fkS^1(L^2(\R^n))} \leq \| W_1 \cE_\gamma \|_{\fkS^2(L^2(\R),L^2(\R^n))} \| \overline{W_2} \cE_\gamma \|_{\fkS^2(L^2(\R),L^2(\R^n))}.
\end{equation*}
For $W \in L^2(\R^n)$, it is a simple matter to verify $W \cE_{\gamma}$ has integral kernel 
\begin{equation*}
    \cK(x,t) \coloneq W(x) \alpha(t) \quad \textrm{satisfying} \quad \|\cK\|_{L^2(\R^n \times \R)} \lesssim_{\alpha} \|W\|_{L^2(\R^n)}. 
\end{equation*}
Thus, by the discussion in \S\ref{subsec: HS}, we have
\begin{equation} \label{eq: Schatten interp 1}
    \| W_1 U_\gamma W_2 \|_{\fkS^1(L^2(\R^n))} \lesssim_{\alpha} \| W_1 \|_{L^2(\R^n)} \| W_2 \|_{L^2(\R^n)}.
\end{equation}
We may then interpolate\footnote{More precisely, we apply the bilinear complex interpolation method to $(W_1,W_2)\mapsto W_1 U_\gamma W_2$ (see, for instance, \cite[Theorem 4.4.1]{MR482275}). For this, we use the interpolant identity $(\mathfrak{S}^{p_1},\mathfrak{S}^{p_2})_{\theta}=\mathfrak{S}^{p_{\theta}}$, which holds when $1/p_\theta = (1-\theta)/p_1 + \theta/p_2$ and $1\leq p_1,p_2\leq\infty$ (see \cite[p. 137, Theorem 13.1]{GK_book} or \cite[Theorem 2.9]{MR2154153}).} between \eqref{eq: endpoint} and \eqref{eq: Schatten interp 1} to conclude that for $q > q_n$, $1 \leq r < r_n(q)$, and $1/(2p) + 1/q = 1/2$, we have
\begin{equation*}
     \| W_1 U_\gamma W_2 \|_{\fkS^{r'}(L^2(\R^n))} \lesssim_{\gamma, \alpha} \| W_1 \|_{L^{2p}(\R^n)}\| W_2 \|_{L^{2p}(\R^n)}.
\end{equation*}
By the duality principle from Proposition~\ref{prop: duality principle}, this implies \eqref{eq: orthonormal Fourier rest} for the same range of exponents as in Theorem~\ref{thm: main}, which completes the proof.
\end{proof}

\subsection{Reduction to \texorpdfstring{$L^p$}{}-improving} 

Having reduced Theorem~\ref{thm: main} to Theorem~\ref{thm: endpoint}, we further reduce the problem to bounding certain convolution operators with nonnegative kernels.

\begin{lemma}\label{lem: Lp improv red}
 Let $\cK \colon \R^n \to \C$ be measurable and define $T f \coloneq |\cK|^2 * f$ for all measurable functions $f \geq 0$. Assume that, for some $1 \leq p \leq 2$ and $C > 0$, and all non-negative $f \in L^p(\R^n)$, the
  $L^p$-improving bound
  \begin{equation}\label{eq: Lp improv red a}
    \| T f \|_{L^{p'}(\R^n)} \leq C \, \| f \|_{L^p(\R^n)}
  \end{equation}
  holds. Then for all
  $W_1$, $W_2 \in L^{2p}(\R^n)$ the function
  \begin{equation*}
    K(x,y) \coloneq W_1(x) \, \cK(x-y) \, W_2(y)
  \end{equation*}
  belongs to $L^2(\R^n \times \R^n)$, with
  \begin{equation}\label{eq: Lp improv red b}
    \| K \|_{L^2(\R^n \times \R^n)}
      \leq \sqrt{C} \, \| W_1 \|_{L^{2p}(\R^n)} \| W_2 \|_{L^{2p}(\R^n)}.
  \end{equation}
  Equivalently, the integral operator with kernel $K$, denoted by $W_1 U W_2$, is Hilbert--Schmidt, and
  \[ \| W_1 U W_2 \|_{\fkS^2(L^2(\R^n)}  \leq \sqrt{C} \, \| W_1 \|_{L^{2p}(\R^n)} \| W_2 \|_{L^{2p}(\R^n)}. \]
\end{lemma}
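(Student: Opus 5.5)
We compute the $L^2$ norm of $K$ directly. By Tonelli's theorem,
\begin{equation*}
  \| K \|_{L^2(\R^n \times \R^n)}^2 = \int_{\R^n}\int_{\R^n} |W_1(x)|^2 \, |\cK(x-y)|^2 \, |W_2(y)|^2 \,\ud y \,\ud x = \int_{\R^n} |W_1(x)|^2 \, T\big(|W_2|^2\big)(x) \,\ud x.
\end{equation*}
All integrands are non-negative measurable functions, so this identity holds in $[0,\infty]$ without any a priori integrability assumptions. We apply it with $f \coloneq |W_2|^2$ and $g \coloneq |W_1|^2$. Since $W_i \in L^{2p}(\R^n)$, both $f$ and $g$ are non-negative functions in $L^p(\R^n)$, with $\|f\|_{L^p} = \|W_2\|_{L^{2p}}^2$ and $\|g\|_{L^p} = \|W_1\|_{L^{2p}}^2$.

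Next we apply H\"older's inequality with the dual pair $(p,p')$ and then the hypothesis \eqref{eq: Lp improv red a}:
\begin{equation*}
  \int_{\R^n} g \, Tf \leq \|g\|_{L^p(\R^n)} \, \|Tf\|_{L^{p'}(\R^n)} \leq C \, \|g\|_{L^p(\R^n)} \, \|f\|_{L^p(\R^n)} = C\, \|W_1\|_{L^{2p}(\R^n)}^2 \|W_2\|_{L^{2p}(\R^n)}^2.
\end{equation*}
Taking square roots gives \eqref{eq: Lp improv red b}, and in particular shows that $K \in L^2(\R^n \times \R^n)$.

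For the operator statement, we invoke \S\ref{subsec: HS}. Any kernel in $L^2(\R^n \times \R^n)$ defines a Hilbert--Schmidt operator whose $\fkS^2$ norm equals the $L^2$ norm of the kernel. The integral operator with kernel $K$ is precisely $f \mapsto W_1 \, (\cK * (W_2 f))$, which is what is meant by $W_1 U W_2$. Hence the $\fkS^2$ bound follows immediately. The only points requiring care are the use of Tonelli's theorem, which is legitimate because everything is non-negative, and the observation that the hypothesis is only needed for non-negative $f$. Neither presents a genuine obstacle; the argument is elementary.
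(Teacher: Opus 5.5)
Your proof is correct and follows the paper's argument essentially verbatim: you rewrite $\|K\|_{L^2}^2$ as $\int |W_1|^2\, T(|W_2|^2)$, apply H\"older's inequality with the pair $(p,p')$ followed by the hypothesised $L^p \to L^{p'}$ bound, take square roots, and invoke the Hilbert--Schmidt discussion of \S\ref{subsec: HS}. Your explicit remark that Tonelli's theorem applies because all integrands are non-negative is a small point the paper leaves implicit.
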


\begin{proof} Observe that
\begin{align*}
\| K \|_{L^2(\R^n \times \R^n)}^2
        &= \int_{\R^n} \int_{\R^n} |W_1(x)|^2 |\cK(x-y)|^2 |W_2(y)|^2\, \ud x\, \ud y \\
        &= \int_{\R^n} |W_1(x)|^2\, T\big(|W_2|^2\big)(x)\,\ud x.
\end{align*}
Applying H\"{o}lder's inequality and then the hypothesised bound \eqref{eq: Lp improv red a}, we deduce
\begin{align*}
   \| K \|_{L^2(\R^n \times \R^n)}^2 
    &\leq \| |W_1|^2 \|_{L^{p}(\R^n)} \big\| T \big( |W_2|^2 \big) \big\|_{L^{p'}(\R^n)} \\
    &\leq C\, \| W_1 \|_{L^{2p}(\R^n)}^2 \| W_2 \|_{L^{2p}(\R^n)}^2.
\end{align*}
Taking square roots yields \eqref{eq: Lp improv red b}. Finally, by the discussion in \S\ref{subsec: HS}, the operator $W_1 U W_2$ is Hilbert--Schmidt and $\| W_1 U W_2 \|_{\fkS^2(L^2(\R^n))} = \| K \|_{L^2(\R^n \times \R^n)}$.
\end{proof}

We apply Lemma~\ref{lem: Lp improv red} in the special case of the operator $U_{\gamma} = \cE_{\gamma} (\cE_{\gamma})^*$ introduced in \eqref{eq: U gamma}. A simple computation shows that
\begin{equation}\label{eq: kernel}
U_{\gamma}f = \cK_{\gamma} \ast f \qquad \textrm{where} \qquad    \cK_{\gamma}(x) \coloneq \int_{\R} e^{2 \pi i x \cdot \gamma(t)} |\alpha(t)|^2\,\ud t. 
\end{equation}
Consistent with the discussion in \S\ref{subsec: methodology}, we may interpret $\cK_{\gamma}$ as the inverse Fourier transform $\widecheck{\sigma}_{\gamma}$ of the pushforward measure $\sigma_\gamma \coloneq \gamma_*(|\alpha|^2\,\ud t)$. Lemma~\ref{lem: Lp improv red} reduces matters to proving $L^p$-improving bounds for the operator $T_{\gamma} f \coloneq |\cK_{\gamma}|^2 \ast f$.

For the purposes of the proof, it is convenient to embed $\cK_{\gamma}$ above in a larger class of kernels. To this end, we introduce the following symbol class.

\begin{definition}\label{dfn: intial symb} For $A \geq 1$, let $\cS(A)$ denote the class of all\footnote{Here $C^{\infty}_tL^{\infty}_x(\R \times \R^n)$ denotes the space of bounded, measurable functions $a \colon \R \times \R^n$ which are smooth in the first variable.} $a \in C^{\infty}_tL^{\infty}_x(\R \times \R^n)$ non-negative which satisfy
\begin{equation*}
  \supp{} a \subseteq [-1, 1] \times \R^n \quad \textrm{and} \quad  \sup_{(t,x) \in \R \times \R^n}|\partial_t^j a(t, x)| \leq A \qquad \textrm{for $0 \leq j \leq n$.}
\end{equation*}
\end{definition}

\begin{definition} 
For $\gamma \colon [-2,2] \to \R^n$ a smooth, non-degenerate curve and $a \in \cS(A)$ for $A \geq 1$, we let $T_{\gamma}[a]$ be the operator given by convolution with $|\cK_{\gamma}[a]|^2$, where
\begin{equation*}
     \cK_{\gamma}[a](x) \coloneq \int_{\R} e^{2 \pi i \gamma(t) \cdot x} a(t, x) \, \ud t.
\end{equation*}
\end{definition}

In view of Lemma~\ref{lem: Lp improv red}, Theorem~\ref{thm: endpoint} is a consequence of the following theorem.

\begin{restatable}{theorem}{symbolconvolutionbound} \label{thm: symbolconvolutionbound}
     Let $n \geq 3$ and $\gamma \colon [-2,2] \to \R^n$ be a smooth, non-degenerate curve and $a \in \cS(A)$ for $A \geq 1$. For $1 \leq p < p_n$ and non-negative $f \in L^p(\R^n)$, we have
     \begin{equation*}
         \|T_{\gamma}[a] f \|_{L^{p'}(\R^n)} \lesssim_{\gamma, A, p} \| f \|_{L^p(\R^n)}. 
     \end{equation*}
     Thus, $T_\gamma[a]$ uniquely extends to a bounded linear operator from $L^p(\R^n)$ to $L^{p'}(\R^n)$.
\end{restatable}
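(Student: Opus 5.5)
We will prove Theorem~\ref{thm: symbolconvolutionbound} by decomposing the kernel $|\cK_{\gamma}[a]|^2$ into pieces according to the size of its decay, proving $L^p \to L^{p'}$ bounds for each piece by interpolating an $L^1 \to L^\infty$ bound with an $L^2 \to L^2$ bound, and then summing.

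\textbf{Step 1: decomposition of the kernel.} Dyadically decompose $\R^n$ in $|x| \sim 2^k$; the region $|x| \lesssim 1$ is harmless since $|\cK_\gamma[a]|^2 \le 4A^2$ is bounded there. For fixed $k$, follow the decomposition of \cite{PS2007, KLO2023, GovindanSheri2024}. For $x$ in the annulus, the phase $t \mapsto \gamma(t)\cdot x$ has some critical order $m \in \{2,\dots,n\}$. More precisely, a point $x$ lies, up to an anisotropic neighbourhood, in the cone $\Gamma_m$ generated by the normal spaces $\langle \gamma^{(1)}(s),\dots,\gamma^{(m-1)}(s)\rangle^\perp$, $|s|\le 1$.

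Localize $t$ to intervals of length $2^{-k\beta}$ around such $s$, with $\beta$ chosen so that the derivatives $\gamma^{(j)}(s)\cdot x$, $j<m$, are of size at most $2^{k(1-\beta j)}$. Van der Corput's lemma, which only needs $\partial_t^j a$ bounded for $j\le n$ and is why $\cS(A)$ is defined that way, then gives $|\cK_\gamma[a](x)|^2 \lesssim 2^{-2k/m}$ on the piece. Combined with a gain from nearby nondegeneracy, this yields pointwise bounds $|\cK_\gamma[a]|^2 \lesssim \sum_{k}\sum_{m,\ell} 2^{-\theta(k,m,\ell)} \chi_{k,m,\ell}$. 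Here each $\chi_{k,m,\ell}$ is a bounded, non-negative, smooth bump adapted to an anisotropic neighbourhood $N_{k,m,\ell}$ of a piece of the cone $\Gamma_m$ at scale $2^k$, and $\ell$ records the dyadic distance to $\Gamma_{m+1}$. We work with this pointwise majorant, which suffices since $f\ge0$.

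\textbf{Step 2: bounds for each averaging operator.} For each $N=N_{k,m,\ell}$ we need $\|\chi_N * f\|_{L^{p'}} \lesssim M_N^{1-2/p'}\cdots$. The $L^1\to L^\infty$ bound is trivial: it equals $\|\chi_N\|_\infty\lesssim 1$. The $L^2\to L^2$ bound is $\|\widehat{\chi_N}\|_\infty \le |N|$, which is too lossy. Interpolating these two naively reproduces only the uniform decay estimate \eqref{eq: slow decay}, and hence a non-sharp range of $p$.

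The gain comes from the curvature of the cone, via a high--low argument as in \cite{GWZ2020}. Write $\chi_N = \chi_N * \phi_{\text{low}} + \chi_N*(\delta-\phi_{\text{low}})$, cutting frequencies at a scale adapted to the thickness of $N$ in each direction.

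For the high part, $N$ is a union of thin slabs along cone directions, one for each $s$. Their Fourier transforms concentrate on dual slabs pointing in essentially disjoint directions, owing to the nondegeneracy of the generating curve. Plancherel together with this finite-overlap property gives an $L^2$ bound much smaller than $|N|$.

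The low part is a smoothed average over a fatter neighbourhood. It is dominated pointwise by a rescaled kernel of the same type at a coarser scale, which is controlled by an induction-on-scale hypothesis on the $L^p \to L^{p'}$ constant. An affine rescaling adapted to $\gamma$ near $s$ keeps the curve in a uniform nondegenerate class.

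Interpolating the improved $L^2$ bound with the $L^1\to L^\infty$ bound gives
\begin{equation*}
\|\chi_N*f\|_{L^{p'}}\lesssim 2^{\epsilon(p,k,m,\ell)}\|f\|_{L^p},
\end{equation*}
and the exponents are then checked against the decay factor $2^{-\theta}$.

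\textbf{Step 3: summation and the main obstacle.} Multiplying by $2^{-\theta(k,m,\ell)}$, one needs the exponents to be negative, uniformly with geometric decay in $k$ and $\ell$, for all $p<p_n$. The critical case should be $m=n$, the two-dimensional cone $\Gamma_n$ with decay $2^{-2k/n}$, where the numerology $p_n'=n(n+1)/2$ should appear; the strict inequality absorbs logarithmic losses from summing over $m$ and $\ell$.

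The main obstacle is Step 2's high-frequency disjointness in the anisotropic setting. It requires a precise description of the Fourier support of the neighbourhood $N_{k,m,\ell}$ and a quantitative transversality statement showing that the dual slabs for $|s-s'|\gtrsim\delta$ overlap boundedly. I would first establish this by Taylor-expanding $\gamma$ around $s$ and using the Wronskian condition. I would also make sure that the induction hypothesis is closed by stating it uniformly over the class $\cS(A)$ and over curves with bounded nondegeneracy constants.
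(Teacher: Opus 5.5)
Your outline has the same architecture as the paper. It uses a KLO-type decomposition into pieces with an $L^\infty$ kernel bound, supported on unions of anisotropic plates over all centres $s$. It dominates $|\cK_\gamma[a]|^2$ (using positivity, plus Cauchy--Schwarz over the pieces) by indicators of these unions. It proves $L^p\to L^{p'}$ bounds for each union by a high--low argument, and it ends with an exponent count in which the piece $\bdell=(\lceil k/n\rceil)$ near $\Gamma_n$ is critical.

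The gap is in Step~2, which is where essentially all the work lies. As written it does not close, and the main obstacle is not the high-frequency disjointness.

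Take a plate family with radii $\rho_1\le\rho_2\le\cdots\le\rho_n$, consisting of about $\rho_2/\rho_1=2^{\|\bdell\|}$ plates with $\rho_1/\rho_2$-separated centres. Lemma~\ref{lem: dual overlap} only shows that a frequency $\xi$ with $|\xi|\gtrsim\rho_2^{-1}$ lies in $O(1+\rho_1^{-1}|\xi|^{-1})$ dual plates. So bounded overlap is available only for $|\xi|\gtrsim\rho_1^{-1}$. This forces a dilemma for a single cut:
\begin{enumerate}[(a)]
\item If you cut at $\rho_1^{-1}$, the low part is your $\chi_N$ smoothed at its own thinnest scale $\rho_1$. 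That is a family with the same index, so there is nothing coarser to induct on.
\item If you cut at $R_0\ll\rho_1^{-1}$, the Plancherel bound you can get for the high part is only $(\rho_1R_0)^{-1}|\Theta|$, where $|\Theta|$ is the volume of one plate. The interpolated estimate then loses $(\rho_1R_0)^{-2/p'}$.
\end{enumerate}
The real obstacle is the $\|\bdell\|$ dyadic frequency scales between $\rho_2^{-1}$ and $\rho_1^{-1}$. There the dual plates overlap too much for Plancherel, and the kernel is not blurred enough for Young.

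The low part also cannot be handled by an affine rescaling adapted to $\gamma$ near one $s$. The set is a union over all $s\in[-1,1]$, the gain comes from the interaction of different directions, and no single rescaling maps it to a kernel of the same type. Your one-parameter index $\ell$ is likewise too coarse: the radii $\rho_j(\bdell)=2^{-D_j(\bdell)}$ depend on the whole chain $\bdell=(\ell_M,\dots,\ell_N)$.

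What is missing is a comparison of the smoothed kernel with a coarser family on the \emph{same} curve, together with the reason this comparison gains.
\begin{itemize}
\item Suppose $\rho_1\le R^{-1}\le 2^{-K}\rho_2$. Lemma~\ref{lem: plate sep} shows the $R^{-1}$-fattened plates still overlap boundedly. This needs $M\ge3$; pieces with $M\le2$ are first absorbed into $M=3$ families via Lemma~\ref{lem: Lambda contain}, which is why only $E(\bdell)$ appears in Proposition~\ref{prop: Lambda conv}.
\item Hence the smoothed kernel is $\lesssim R\rho_1\,\bdone_{\Lambda_\gamma(\bdell';C')}$ for a coarser index $\bdell'$ with $\rho_1(\bdell')\approx R^{-1}$ and $\|\bdell'\|<\|\bdell\|$.
\item The induction runs on $\|\bdell\|$. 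It closes because this height gain beats the worse normalisation of the coarser family:
\[ D_1(\bdell)-D_1(\bdell')-\tfrac{2}{p'}\big(D(\bdell)-D(\bdell')\big)\ge\delta(p)\big(D_1(\bdell)-D_1(\bdell')\big), \]
which is Lemma~\ref{lem: D exp}.
\end{itemize}
Your proposal identifies neither this domination nor this gain.

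The paper applies the comparison band by band in a Calder\'on--Littlewood--Paley decomposition. It uses Young's inequality for $R\lesssim 2^K\rho_2^{-1}$, Plancherel (interpolated with $L^1\to L^\infty$) for $R\gtrsim 2^{-K}\rho_1^{-1}$, and the coarsening in between. It then reassembles the bands with the square function, using $p\le2\le p'$, without loss.

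Your two-piece split can also be rescued, but only by cutting at $R_0=2^{-K}\rho_1^{-1}$, with $\phi_{\mathrm{low}}\ge0$ an $L^1$-normalised bump supported in a ball of radius $R_0^{-1}$. The high part $h$ then has $\|\widehat h\|_\infty\lesssim 2^K|\Theta|$. This follows from the frequency-dependent overlap bound together with $|1-\widehat{\phi}_{\mathrm{low}}(\xi)|\lesssim|\xi|/R_0$. The low part gains $2^{-\delta(p)K}$ exactly as above.
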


We postpone the proof of Theorem~\ref{thm: symbolconvolutionbound} to the following sections. Assuming it holds, it is a simple matter to deduce Theorem~\ref{thm: endpoint} holds and therefore conclude the proof of Theorem~\ref{thm: main}.

\begin{proof}[Proof (of Theorem~\ref{thm: endpoint}, assuming Theorem~\ref{thm: symbolconvolutionbound})] Defining $a \colon \R \times \R^n \to [0, \infty)$ by $a(t,x) \coloneq |\alpha(t)|^2$ for all $(t,x) \in \R \times \R^n$, it follows that $a \in \cS(A)$ for some $A = O_{\alpha}(1)$. Furthermore, $\cK_{\gamma}[a] = \cK_{\gamma}$, where the latter is defined as in \eqref{eq: kernel}. Thus, $T_{\gamma}[a] = T_{\gamma}$ and therefore Theorem~\ref{thm: symbolconvolutionbound} guarantees $T_\gamma$ satisfies the hypothesis \eqref{eq: Lp improv red a} of Lemma~\ref{lem: Lp improv red} for some $C = O_{\gamma, \alpha}(1)$ and all $1 \leq p < p_n$. Lemma~\ref{lem: Lp improv red} now implies the desired bound \eqref{eq: endpoint} holds for all $1 \leq p < p_n$.    
\end{proof}

\begin{remark} In the proof of the proposition, we decompose the function $a$ into pieces supported in various regions of $\R \times \R^n$. It is therefore natural to consider the more general context where $a$ is allowed to depend on the $x$ variable, rather than just considering the particular case used in the proof of Theorem~\ref{thm: endpoint}.
\end{remark}

We have reduced Theorem~\ref{thm: main} to the $L^p$-improving estimate in Theorem~\ref{thm: symbolconvolutionbound}. The use of $L^p$-improving bounds to control $\| W_1 U_{\gamma} W_2 \|_{\fkS^2(L^2(\R^n))}$ is apparently a new phenomenon, particular to the study of restriction estimates for curves. For the Fourier extension operator $\cE_{\Sigma}$ associated to a hypersurface $\Sigma$ of nonvanishing Gaussian curvature, as treated by Frank--Sabin~\cite{FS2017}, the analogous Hilbert--Schmidt norm can be effectively estimated simply by pointwise bounding the kernel of $\cE_\Sigma (\cE_{\Sigma})^*$ using the Fourier decay of surface measures on $\Sigma$. This simple argument does not effectively generalise to the setting of space curves, owing to the anisotropic nature of the Fourier decay in this setting (see the discussion in \S\ref{subsec: methodology} above). 

The proof of Theorem~\ref{thm: symbolconvolutionbound} forms the bulk of the paper and is presented in \S\S\ref{sec: Lp improv I}-\ref{sec: Lp improv III} below. As an indication of the complexity of the problem, we note that Young's convolution inequality does not capture the optimal $L^p$-improving estimates for $T_\gamma$.\footnote{This is in contrast to the proof of \cite[Theorem 9]{FS2017}, where a weak-type version of Young's inequality (the Hardy-Littlewood-Sobolev inequality) is used to prove Schatten versions of Strichartz estimates.} Indeed, classical work of Arkhipov--Chubarikov--Karatsuba \cite{ACK_book, ACK1979} implies that the kernel $|\cK_{\gamma}|^2$ of $T_\gamma$ belongs to $L^{p'/2}(\R^n)$ if and only if $p' > (n^2 + n + 2)/2$, and thus Young's inequality can only guarantee $L^p(\R^n) \to L^{p'}(\R^n)$ boundedness in this range. This is much more restrictive than the optimal range $p' \geq p_n' = (n^2 + n)/2$, which we obtain up to the endpoint. Thus, we must develop a more nuanced approach to analyse $T_\gamma$.




\section{Proof of \texorpdfstring{$L^p$-improving}{Lp improving} I: Technical preliminaries}\label{sec: Lp improv I}

We now turn to the proof of Theorem~\ref{thm: symbolconvolutionbound}, which will occupy this section and the following two sections. Here we discuss various technical preliminaries.




\subsection{Quantitative non-degeneracy} It is helpful to quantify the non-degeneracy condition imposed on the curve $\gamma$. 

\begin{definition}\label{dfn: G class} For $B \geq 1$ and $1 \leq N \leq n$, we let $\fkG(B; N)$ denote the set of all smooth, non-degenerate curves $\gamma \colon [-2,2] \to \R^n$ such that
\begin{equation} \label{eq: G class 1}
   \max_{1 \leq j \leq 2n} \| |\gamma^{(j)}| \|_{L^\infty([-2,2])} \leq B \quad \textrm{and} \quad
    \inf_{s \in [-1,1]}|\gamma^{(1)}(s) \wedge \cdots \wedge \gamma^{(N)}(s)| \geq B^{-1}.
\end{equation}
For $n = N$, we write $\fkG(B) \coloneq \fkG(B; n)$. 
\end{definition}

Clearly, given any smooth, non-degenerate curve $\gamma \colon [-2,2] \to \R^n$, there exists some $B = O_{\gamma}(1)$ such that $\gamma \in \fkG(B)$. 

For $N = n$, the wedge product in \eqref{eq: G class 1} can be interpreted as the determinant of the $n \times n$ matrix
\begin{equation}\label{eq: M matrix}
    \bdM_{\gamma}(s) \coloneq \begin{pmatrix}
\gamma^{(1)}(s) & \cdots & \gamma^{(n)}(s)
    \end{pmatrix},
\end{equation}
whose columns are given by the first $n$ derivatives of the curve $\gamma$. Given $\gamma \in \fkG(B)$ for $B \geq 1$, by \eqref{eq: G class 1}, for each $s \in [-1,1]$ it follows that $\bdM_{\gamma}(s)$ and its transpose $\bdM_{\gamma}(s)^{\top}$ are invertible. Moreover, by combining Cramer's rule with \eqref{eq: G class 1}, the inverse transpose $\bdM_{\gamma}(s)^{-\top}$ has operator norm $B^{O(1)}$. This implies that
    \begin{equation} \label{eq: G class 4}
        |x| = |\bdM_{\gamma}(s)^{-\top}\bdM_{\gamma}(s)^{\top}x| \leq B^{O(1)} \max_{1 \leq j \leq n} |\gamma^{(j)}(s) \cdot x| 
    \end{equation}
    for $x \in \R^n$, $s \in [-1,1]$.
%




\subsection{Plates} Here we introduce a family of parallelepipeds associated to a smooth, non-degenerate curve, which we use in the kernel decomposition in \S\ref{sec: Lp improv II} below.

\begin{definition}\label{dfn: plate} Let $\gamma \colon [-2,2] \to \R^n$ be a smooth, non-degenerate curve. For $s \in [-2,2]$, $\bdrho = (\rho_1,\dots,\rho_n) \in (0, \infty)^n$ and $C \geq 1$, define the parallelepiped
\begin{equation*}
    \Theta(s,\bdrho, C) \coloneq \big\{ x \in \R^n: |\gamma^{(j)}(s) \cdot x| \leq C \rho_j, \, 1 \leq j \leq n\big\}.
\end{equation*}
Furthermore, for integers $3 \leq M \leq N \leq n$, define
\begin{equation*}
  \Theta_{M,N}(s, \bdrho, C) \coloneq \Theta(s,\bdrho, C)  \cap \big\{ x \in \R^n: |\gamma^{(j)}(s) \cdot x| \geq (2C)^{-1}\rho_j, \, M \leq j \leq N\big\}.  
\end{equation*} 
We refer to such sets \emph{plates}, and $s$ and $\bdrho$ as the \emph{centre} and \emph{principal radii} of the given plate, respectively.
\end{definition}

In later applications, we always have $C = O_\gamma(1)$, so implicit constants depending on this parameter should be viewed as unproblematic. Here we record a separation property of the plates, which is later exploited in \S\ref{sec: Lp improv III}.

\begin{lemma} \label{lem: plate sep}
    For $B$, $C \geq 1$, there exists $\varepsilon_{B,C} > 0$ with $\varepsilon_{B,C} \gtrsim_B C^{-2}$ such that for $0 < \varepsilon \leq \varepsilon_{B,C}$ the following holds. Suppose $3 \leq M \leq N \leq n$ and $s_1$, $s_2 \in [-1,1]$, $\bdrho = (\rho_1,\dots,\rho_n) \in (0, \infty)^n$ satisfy:
    \begin{enumerate}[\normalfont(i)]
        \item $\rho_1 \leq \rho_2 \leq \cdots \leq \rho_N = \rho_{N+1} = \cdots = \rho_n$;
        \item $\varepsilon^{-1}\rho_M^{-1} \rho_{M-1} \leq |s_2 - s_1| \leq \varepsilon$;
        \item For $M \leq i \leq N-1$, either $|s_2 - s_1| \geq \varepsilon^{-1} \rho_{i+1}^{-1} \rho_i$ or $|s_2 - s_1| \leq \varepsilon \rho_{i+1}^{-1} \rho_i$.
    \end{enumerate}
    If $\gamma \in \fkG(B)$, then
    \begin{equation*}
         d \big( \Theta_{M,N}(s_1,\bdrho,C), \Theta_{M,N}(s_2,\bdrho,C) \big) \gtrsim_B C^{-1} \rho_M |s_2 - s_1|.
    \end{equation*}
\end{lemma}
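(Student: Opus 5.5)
The plan is to fix arbitrary points $x_1 \in \Theta_{M,N}(s_1,\bdrho,C)$ and $x_2 \in \Theta_{M,N}(s_2,\bdrho,C)$ and find one index $j$ for which the linear functional $x \mapsto \gamma^{(j)}(s_2)\cdot x$ separates them. That is, we aim to show
\begin{equation*}
|\gamma^{(j)}(s_2)\cdot(x_1-x_2)| \gtrsim_B C^{-1}\rho_M h, \qquad h \coloneq s_2 - s_1 .
\end{equation*}
Since $|\gamma^{(j)}(s_2)| \leq B$, this gives $|x_1 - x_2| \gtrsim_B C^{-1}\rho_M|h|$, which is the claim. The lower bound on $x_2$'s side is automatic, because $|\gamma^{(j)}(s_2)\cdot x_2| \leq C\rho_j$. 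So the work lies in showing that $|\gamma^{(j)}(s_2)\cdot x_1|$ is much larger than $C\rho_j$. We get this by Taylor expanding $\gamma^{(j)}(s_2)$ about $s_1$, where the size of each $\gamma^{(k)}(s_1)\cdot x_1$ is controlled by the plate at $s_1$.

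\emph{Choice of index.} Let $m$ be the largest index in $\{M,\dots,N\}$ with $|h|\rho_m/\rho_{m-1} \geq \varepsilon^{-1}$. Such an index exists since $m = M$ qualifies by (ii). By maximality and the dichotomy (iii), for every $m \leq i \leq N-1$ we have $|h|\rho_{i+1}/\rho_i \leq \varepsilon$. For $i \geq N$ we have $\rho_{i+1} = \rho_i$ by (i), so the same bound holds because $|h| \leq \varepsilon$. We take $j = m-1 \geq 2$ and expand
\begin{equation*}
\gamma^{(m-1)}(s_2)\cdot x_1 = \sum_{k=0}^{K-1} \frac{h^k}{k!}\,\gamma^{(m-1+k)}(s_1)\cdot x_1 + R ,
\end{equation*}
with $K = n-m+2$, so that only derivatives of order at most $n+1 \leq 2n$ enter and the remainder is controlled by \eqref{eq: G class 1}.

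\emph{Estimating the terms.} The $k=1$ term is the main term. Since $m \in [M,N]$, the definition of $\Theta_{M,N}$ gives $|h\,\gamma^{(m)}(s_1)\cdot x_1| \geq (2C)^{-1}|h|\rho_m$. The other terms are estimated as follows.
\begin{itemize}
\item The $k=0$ term is at most $C\rho_{m-1} \leq C\varepsilon|h|\rho_m$, by the choice of $m$.
\item For $2 \leq k \leq n-m+1$, telescoping the ratio bounds gives $|h|^k C\rho_{m-1+k} \leq C\varepsilon^{k-1}|h|\rho_m$.
\item For the remainder, we have $|R| \lesssim_B |h|^K|x_1|$ and, by \eqref{eq: G class 4}, $|x_1| \leq B^{O(1)}C\rho_n$. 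Moreover $\rho_n/\rho_m \leq (\varepsilon/|h|)^{n-m}$, so $|R| \lesssim_B C\varepsilon^{n-m}|h|^{2}\rho_m \leq C\varepsilon|h|\rho_m$.
\end{itemize}
Hence everything except the main term is $O_B(C\varepsilon|h|\rho_m)$.

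\emph{Conclusion and main obstacle.} The total error, together with the bound $C\rho_{m-1} \leq C\varepsilon|h|\rho_m$ for the $x_2$ contribution, is $O_B(C\varepsilon)\,|h|\rho_m$. This is at most $(4C)^{-1}|h|\rho_m$ once $\varepsilon \leq \varepsilon_{B,C} \sim_B C^{-2}$. We then obtain
\begin{equation*}
|\gamma^{(m-1)}(s_2)\cdot(x_1-x_2)| \gtrsim C^{-1}|h|\rho_m \geq C^{-1}|h|\rho_M ,
\end{equation*}
as required. The main obstacle is the choice of $m$. A naive choice $j = M-1$ fails whenever some ratio $\rho_{i+1}/\rho_i$ with $i \geq M$ is large, because the higher Taylor terms then dominate. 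The dichotomy (iii) is precisely what allows us to move to the last scale at which the ratio is large, beyond which every term decays geometrically in $\varepsilon$. Care is also needed with the remainder, which must be bounded using $\rho_n$ rather than $\rho_m$, and with confirming that the required number of derivatives stays within the $2n$ controlled in $\fkG(B)$.
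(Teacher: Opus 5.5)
Your proposal is correct and follows essentially the same route as the paper: the same choice of $m$ (the largest index in $[M,N]$ with $|s_2-s_1|\rho_m/\rho_{m-1}\geq\varepsilon^{-1}$), the same upgrade of the later ratios via (iii), a Taylor expansion of $\gamma^{(m-1)}\cdot x_1$ about $s_1$ with the remainder controlled through \eqref{eq: G class 4}, and separation via the single functional $\gamma^{(m-1)}(s_2)$. The only difference is cosmetic: the paper handles $m=N$ as a separate case using the mean value theorem, whereas your expansion, carried one order further, covers that case uniformly.
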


Here, given non-empty sets $E$, $F \subseteq \R^n$,
\[ d(E,F) \coloneq \inf\{ |x-y| : x \in E,\, y \in F \}. \]

\begin{proof}[Proof (of Lemma~\ref{lem: plate sep})]
       Fix $x_j \in \Theta_{M,N}(s_j, \bdrho, C)$ for $j = 1$ and $j = 2$. Our goal is to show that, provided $\varepsilon > 0$ is chosen sufficiently small, depending only on $n$, $A$ and $B$, there exists some $M \leq m \leq N$ such that
    \begin{equation}\label{eq: plate sep 1}
        |s_2 - s_1| \geq \varepsilon^{-1} \rho_m^{-1}\rho_{m-1} 
    \end{equation}
and
\begin{equation}\label{eq: plate sep 2}
        \big| \big(\gamma^{(m-1)}(s_2) - \gamma^{(m-1)}(s_1)\big) \cdot x_1 \big| \geq (4C)^{-1} \rho_m |s_2 - s_1|.
\end{equation}
Once this is established, we may argue as follows. By the Cauchy--Schwarz and triangle inequalities,
    \begin{align*}
        |\gamma^{(m-1)}(s_2)||x_2 - x_1| &\geq |\gamma^{(m-1)}(s_2)\cdot(x_2 - x_1)| \\
        &\geq \big|\big(\gamma^{(m-1)}(s_2) - \gamma^{(m-1)}(s_1)\big)\cdot x_1| - \sum_{j=1}^2|\gamma^{(m-1)}(s_j)\cdot x_j|. 
    \end{align*}
    We apply \eqref{eq: plate sep 2} to lower bound the first term on the right hand side, and use the upper bounds from Definition~\ref{dfn: plate} that arise because $x_j \in \Theta_{M,N}(s_j,\bdrho,C)$ to upper bound the second term on the right hand side. From this, we deduce that
    \begin{align*}
        |\gamma^{(m-1)}(s_2)||x_2 - x_1| &\geq (4C)^{-1}\rho_m |s_1 - s_2| - 2C \rho_{m-1}\\
        &\geq \big((4C)^{-1} - 2C\varepsilon\big) \rho_m |s_1 - s_2|,
    \end{align*}
    where the second inequality follows from using \eqref{eq: plate sep 1} to upper bound $2C \rho_{m-1}$. Provided $\varepsilon > 0$ is sufficiently small, it follows from \eqref{eq: G class 1} that
    \begin{equation*}
        |x_2 - x_1| \geq (8C)^{-1} B^{-1} \rho_m|s_1 - s_2| \geq (8C)^{-1} B^{-1} \rho_M|s_1 - s_2|.
    \end{equation*}
    Taking infima of this equation over all choices of $x_j \in \Theta_{M,N}(s_j, \bdrho, C)$ for $j = 1$ and $j = 2$ gives the required lower bound on $d(\Theta_{M,N}(s_1, \bdrho, C),\Theta_{M,N}(s_2, \bdrho, C))$.

  We now turn to the proof of the existence of such an $m$. Combining \eqref{eq: G class 4} with Definition~\ref{dfn: plate} and hypothesis~(i), we find that
    \begin{equation*}
        |x_1| \leq B^{O(1)} \max_{1 \leq j \leq n} |\gamma^{(j)}(s_1) \cdot x_1| \leq C B^{O(1)} \rho_N.
    \end{equation*}
    Recalling \eqref{eq: G class 1}, this implies that
    \begin{equation}\label{eq: plate sep 3}
        \sup_{s \in [-1,1]} |\gamma^{(j)}(s) \cdot x_1| \leq \| |\gamma^{(j)}| \|_{L^\infty([-1,1])}\, |x_1| \leq C B^{O(1)} \rho_N
    \end{equation}  
    for $1 \leq j \leq 2n$.

    The proof now divides into two cases.\medskip

\noindent \textit{Case 1: $|s_2 - s_1| \geq \varepsilon^{-1} \rho_N^{-1} \rho_{N-1}$.} In this case, we take $m = N$. Then \eqref{eq: plate sep 1} holds by the hypothesis of this case. Given $s \in [-1,1]$ lying between $s_1$ and $s_2$, by the mean value theorem,
\begin{equation*}
    |\gamma^{(N)}(s) \cdot x_1| \geq |\gamma^{(N)}(s_1) \cdot x_1| - \sup_{t \in [-1,1]}|\gamma^{(N+1)}(t) \cdot x_1| |s - s_1|. 
\end{equation*}
Apply the lower bounds on $|\gamma^{(N)}(s_1) \cdot x_1|$ implied by Definition~\ref{dfn: plate}, because $x_1 \in \Theta_{M,N}(s_1,\rho,C)$, to lower bound the first term on the right-hand side. Apply \eqref{eq: plate sep 3} and the bound $|s_1 - s_2| \leq \varepsilon$ from hypothesis~(ii) to upper bound the second term. This yields that
\begin{equation*}
    |\gamma^{(N)}(s) \cdot x_1| \geq \big((2C)^{-1} - C B^{O(1)} \varepsilon\big)\rho_N \geq (4C)^{-1} \rho_N, 
\end{equation*}
 where the last inequality holds provided $\varepsilon > 0$ is suitably small. Applying the mean value theorem, we thus conclude that
    \begin{equation*}
     \big| [\gamma^{(N-1)}(s_1) - \gamma^{(N-1)}(s_2)] \cdot x_1 \big| \geq (4C)^{-1} \rho_N |s_1 - s_2|,   
    \end{equation*}
    which is the required bound \eqref{eq: plate sep 2} for $m = N$. This concludes the proof of this case.\medskip

    \noindent \textit{Case 2: $|s_2 - s_1| < \varepsilon^{-1} \rho_N^{-1} \rho_{N-1}$.} By hypothesis~(ii), we must have $M < N$. Let $M \leq m \leq N-1$ be the least value satisfying 
    \begin{equation}\label{eq: plate sep 4}
       |s_2 - s_1| < \varepsilon^{-1} \rho_{i+1}^{-1} \rho_i \qquad \textrm{for all $m \leq i \leq N-1$,}  
    \end{equation}
    noting that such an $m$ exists by the hypothesis of this case. 
    By the minimality assumption, and hypothesis (ii) in the case $m = M$, we deduce that \eqref{eq: plate sep 1} holds for this value of $m$. It thus remains to show \eqref{eq: plate sep 2}. 

    By hypothesis (iii), we can upgrade \eqref{eq: plate sep 4} to $|s_2 - s_1| \leq \varepsilon \rho_{i+1}^{-1} \rho_i$ for all $m \leq i \leq N-1$. We may recursively apply this bound to deduce that
    \begin{equation*}
      \rho_j|s_2 - s_1|^{j-m} \leq \varepsilon^{j-m} \rho_m \quad \textrm{for all $m \leq j \leq N$.}
    \end{equation*}
    Since our hypotheses ensures $\rho_N = \cdots = \rho_n$ and $|s_2 - s_1| \leq \varepsilon$, we in fact have
    \begin{equation}\label{eq: plate sep 5}
      \rho_j|s_2 - s_1|^{j-m} \leq \varepsilon^{j-m} \rho_m
    \end{equation}
    for all $m \leq j \leq n$.

    By a Taylor expansion of $\gamma^{(m-1)}$, we conclude that
    \begin{multline}\label{eq: plate sep 6}
       \big|\big(\gamma^{(m-1)}(s_2) - \gamma^{(m-1)}(s_1)\big) \cdot x_1\big| \geq |\gamma^{(m)}(s_1) \cdot x_1||s_1 - s_2| \\ - \sum_{j= m + 1}^{n-1} |\gamma^{(j)}(s_1)\cdot x_1||s_2 - s_1|^{j-m +1} - \sup_{t \in [-1,1]}|\gamma^{(n)}(t)\cdot x_1||s_2 - s_1|^{n-m+1}.
    \end{multline}
    We use Definition~\ref{dfn: plate}, through the inclusion $x_1 \in \Theta_{M,N}(s_1,\bdrho,C)$, to lower bound the principal term arising from the Taylor polynomial and upper bound the higher order terms. We further use \eqref{eq: plate sep 3} to upper bound the remainder term. From this, we see the right-hand side of \eqref{eq: plate sep 6} is lower bounded by
    \begin{equation*}
        \Big( (2C)^{-1}\rho_m  - C\sum_{j = m+1}^{n-1} \rho_j |s_2 - s_1|^{j-m} - CB^{O(1)} \rho_n |s_2 - s_1|^{n-m} \Big)|s_2 - s_1|.
    \end{equation*}
    Using \eqref{eq: plate sep 5}, this can be further lower bounded by
    \begin{equation*}
        \big( (2C)^{-1} - nC\varepsilon - CB^{O(1)}\varepsilon \big)\rho_m|s_2 - s_1|  \geq (4C)^{-1}\rho_m|s_2 - s_1|,
    \end{equation*}
    where the last inequality holds provided $\varepsilon > 0$ is suitably small. Thus, 
    \begin{equation*}
        \big| \big(\gamma^{(m-1)}(s_2) - \gamma^{(m-1)}(s_1)\big) \cdot x_1 \big| \geq (4C)^{-1} \rho_m |s_2 - s_1|,
    \end{equation*}
which is the required bound \eqref{eq: plate sep 2}. This concludes the proof.
\end{proof}

Later, we shall apply Lemma~\ref{lem: plate sep} pairwise to a whole family of plates, with centres lying in some set $X \subseteq [-1,1]$. The following technical lemma shall allow us to essentially dispense with hypothesis (iii).

\begin{lemma} \label{lem: force sep}
    For $K \in \N$ and $0 < \delta \leq 1$, there exists some $A \in \N$ with $A \lesssim_K \delta^{-2K}$ such that the following holds. Let $\varepsilon_1, \dots, \varepsilon_K > 0$ and $X \subseteq [-1,1]$. There exists a decomposition $X = \bigcup_{\alpha = 1}^A X_{\alpha}$ satisfying the following property. For all $1 \leq \alpha \leq A$, if $s_1$, $s_2 \in X_\alpha$, then for all $1 \leq i \leq K$ either $|s_1 - s_2| \leq \delta \varepsilon_i$ or $|s_1 - s_2| \geq \delta^{-1} \varepsilon_i$.
\end{lemma}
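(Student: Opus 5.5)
Since $A$ must not depend on the $\varepsilon_i$, I will reduce to the case $K = 1$ and intersect the resulting partitions. If each single scale $\varepsilon_i$ admits a decomposition $X = \bigcup_{\beta=1}^{A_1} X^{(i)}_\beta$ with $A_1 \lesssim \delta^{-2}$ and the required dichotomy for that one $i$, then the common refinement
\[
X_{(\beta_1,\dots,\beta_K)} \coloneq X^{(1)}_{\beta_1}\cap\cdots\cap X^{(K)}_{\beta_K}
\]
has at most $A_1^K \lesssim_K \delta^{-2K}$ pieces. Any two points in the same piece lie in a common piece for every $i$, so they satisfy the dichotomy simultaneously for all $1 \le i \le K$. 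The whole task therefore reduces to the case $K=1$.

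Fix $\varepsilon = \varepsilon_1$. If $\delta^{-1}\varepsilon \ge 2$, the bad case $\delta\varepsilon < |s_1 - s_2| < \delta^{-1}\varepsilon$ can occur, but every pair satisfies $|s_1-s_2| \le 2 \le \delta^{-1}\varepsilon$. Since the dichotomy is needed only modulo the boundary, I will handle the simplest safe version. Partition $[-1,1]$ into consecutive intervals $I_k$ of length $\delta\varepsilon$. Colour $I_k$ by the residue of $k$ modulo $L \coloneq \lceil 2\delta^{-2}\rceil + 2$, and let $X_\beta$ be $X$ intersected with the union of the intervals of colour $\beta$.

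Two points in the same $X_\beta$ come in two kinds. If they lie in the same $I_k$, they are within distance $\delta\varepsilon$. Otherwise they lie in intervals $I_k$, $I_{k'}$ with $|k-k'| \ge L$, so their distance is at least $(L-1)\delta\varepsilon \ge \delta^{-1}\varepsilon$. The number of colours is $L \lesssim \delta^{-2}$, independent of $\varepsilon$, which is exactly what is needed. If $\varepsilon$ is so large that only a few intervals meet $[-1,1]$, the construction still works: it just uses fewer nonempty classes.

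The only point needing care is uniformity: no bound may depend on $\varepsilon_i$ or on the size of $X$. The residue-class colouring handles this automatically, because the count $L$ depends only on $\delta$. With that, the proof is a short construction.
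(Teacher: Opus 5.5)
Your proof is correct and follows essentially the paper's route. The paper also reduces to $K=1$ by recursion, which is equivalent to your common refinement. It then groups the $\delta\varepsilon$-intervals into blocks of length $2\delta^{-1}\varepsilon$ and takes the $\alpha$-th subinterval of each block, which is exactly your residue-class colouring with period $\sim\delta^{-2}$.

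Your aside on the case $\delta^{-1}\varepsilon\ge 2$ is muddled: in that regime the ``far'' alternative is unavailable, so each class must have diameter at most $\delta\varepsilon$. It is harmless, though, because your colouring already handles this case: there are then fewer than $L$ intervals, so each colour class meets at most one of them.
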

\begin{proof}
    By applying the lemma recursively, it suffices to consider the case $K = 1$, with a single parameter $\varepsilon \coloneq \varepsilon_1 >0$. We shall also assume $2\delta^{-1}\varepsilon > 1$; if this is not the case, then the result follows from a simpler variant of the following argument. 
    
    Partition $[-1,1]$ as a unions of disjoint intervals $\{I^{\beta} : 1 \leq \beta \leq B \}$,  for some $B \in \N$, which satisfy 
    \begin{equation*}
        |I^\beta| = 2 \delta^{-1} \varepsilon \quad \textrm{for $1 \leq \beta \leq B-1$} \quad \textrm{and} \quad \inf I^{\beta_1} < \inf I^{\beta_2}, \quad 1 \leq \beta_1 < \beta_2 \leq B.
    \end{equation*}
    For each $1 \leq \beta \leq B$, further partition $I^{\beta}$ as a union of disjoint intervals $\{I^{\beta}_{\alpha} : 1 \leq \alpha \leq A_{\beta}\}$, for some $A_{\beta} \in \N$, which satisfy 
    \begin{equation*}
        |I^{\beta}_{\alpha}| = \delta \varepsilon \quad \textrm{for $1 \leq \beta \leq A_{\beta}-1$} \quad \textrm{and} \quad \inf I^{\beta}_{ \alpha_1} < \inf I^{\beta}_{\alpha_2}, \quad 1 \leq \alpha_1 < \alpha_2 \leq A_{\beta}.
    \end{equation*}
    Note that $A \coloneq \max\{A_{\beta} : 1 \leq \beta \leq B\} \lesssim \delta^{-2}$. Now define 
    \begin{equation*}
      X_{\alpha} \coloneq X \cap \bigcup_{\substack{1 \leq \beta \leq B \\ \alpha \leq A_{\beta}}}I^{\beta}_{\alpha} \qquad \textrm{for $1 \leq \alpha \leq A$.}  
    \end{equation*}
    It is a straightforward matter to verify these sets have the desired property. 
\end{proof}




\subsection{Dual plates} We also introduce a family of \emph{dual plates}. Let $\gamma \colon [-2,2] \to \R^n$ be a smooth, non-degenerate curve. Consider the dual frame $v_1, \dots, v_n \colon [-2,2] \to \R^n$ to the frame $\gamma^{(1)},\dots,\gamma^{(n)} \colon [-2,2] \to \R^n$; that is, for each $s \in [-2,2]$ and $1 \leq i \leq n$, the vector $v_i(s)$ is defined so that 
\begin{equation}\label{eq: dual frame dfn}
v_i(s) \cdot \gamma^{(j)}(s) = \delta_{ij} \qquad \textrm{for all $1 \leq j \leq n$.}
\end{equation}
Explicitly, $v_i(s) = \bdM_{\gamma}(s)^{-\top} e_i$ where $\bdM_{\gamma}(s)$ is the matrix in \eqref{eq: M matrix} and $e_i$ is the $i$th standard basis vector, for all $s \in [-2,2]$ and $1 \leq i \leq n$.

\begin{definition}\label{dfn: dual plate} Let $\gamma \colon [-2,2] \to \R^n$ be a smooth, non-degenerate curve. For $s \in [-2,2]$ and $\bdrho = (\rho_1, \dots, \rho_n) \in (0,\infty)^n$,  define the \emph{dual plate}
    \begin{equation*}
        \Theta^{\circ}(s,\bdrho) \coloneq \Big\{ \xi \in \widehat{\R}^n: |v_j(s) \cdot \xi| \leq \rho_j^{-1}\ \text{for}\ 1 \leq j \leq n \Big\}.
    \end{equation*}
\end{definition}

The set $\Theta^{\circ}(s,\bdrho)$ is essentially the polar dual of the plate $\Theta(s,\bdrho,C)$. Here we record an overlap property of the dual plates, which is later exploited in \S\ref{sec: Lp improv III}.

\begin{lemma} \label{lem: dual overlap} For $B,\lambda \geq 1$, there exists $\varepsilon_B > 0$ such that for all $0<\varepsilon \leq \varepsilon_B$ the following holds. Let $\bdrho = (\rho_1,\dots,\rho_n) \in (0, \infty)^n$ with $0 < \rho_1 \leq \rho_2 \leq \cdots \leq \rho_n$. Suppose $\xi \in \smash{\widehat{\R}^n}$ satisfies $|\xi| \geq \varepsilon^{-1} \lambda \rho_2^{-1}$. If $I \subseteq [-1,1]$ is an interval of length $|I| \leq \varepsilon$ and $\gamma \in \fkG(B)$, then the set
 \begin{equation*}
     \big\{ s \in I : \xi \in \lambda \Theta^{\circ}(s,\bdrho) \big\}
 \end{equation*}
is contained in an interval of length $O( B^{O(1)} \lambda \rho_2^{-1} |\xi|^{-1} )$.
\end{lemma}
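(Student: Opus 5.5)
The idea is to track the scalar functions $\varphi_j(s) \coloneq v_j(s) \cdot \xi$ for $1 \leq j \leq n$. Membership $\xi \in \lambda \Theta^{\circ}(s,\bdrho)$ means $|\varphi_j(s)| \leq \lambda \rho_j^{-1}$ for all $j$. The aim is to show that $\varphi_2$ is strictly monotone on $I$ with $|\varphi_2'| \gtrsim B^{-O(1)}|\xi|$ there. Since the constraint $|\varphi_2(s)| \leq \lambda\rho_2^{-1}$ confines $\varphi_2$ to an interval of length $2\lambda\rho_2^{-1}$, the claim then follows.

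\emph{Step 1: a differential system for the dual frame.} Differentiating \eqref{eq: dual frame dfn} gives $v_i'(s)\cdot\gamma^{(j)}(s) = -v_i(s)\cdot \gamma^{(j+1)}(s)$. This equals $-\delta_{i,j+1}$ for $j<n$, and $-c_i(s) \coloneq -v_i(s)\cdot\gamma^{(n+1)}(s)$ for $j = n$. Expanding in the dual basis, $w = \sum_j (w\cdot\gamma^{(j)}(s))\, v_j(s)$, we obtain
\begin{equation*}
    v_i' = -v_{i-1} - c_i v_n \qquad (v_0 \coloneq 0),
\end{equation*}
and hence $\varphi_i' = -\varphi_{i-1} - c_i \varphi_n$. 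By Cramer's rule and \eqref{eq: G class 1}, $|v_i|, |c_i| \leq B^{O(1)}$ on $[-1,1]$. Also, since $\xi = \sum_j (\xi\cdot v_j)\,\gamma^{(j)}$ (the dual statement to \eqref{eq: G class 4}), we have
\begin{equation*}
    B^{-O(1)}|\xi| \leq \max_j |\varphi_j(s)| \leq B^{O(1)}|\xi|.
\end{equation*}

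\emph{Step 2: $\varphi_1$ is large and $\varphi_n$ is small on $I$.} The set is empty unless some $s_0\in I$ lies in it, so fix such an $s_0$. For $j \geq 2$, the ordering of the $\rho_j$ and the hypothesis on $|\xi|$ give $|\varphi_j(s_0)| \leq \lambda\rho_j^{-1} \leq \lambda\rho_2^{-1} \leq \varepsilon|\xi|$. So for $\varepsilon \leq \varepsilon_B$ small, the lower bound in Step 1 forces $|\varphi_1(s_0)| \geq B^{-O(1)}|\xi|$.

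Since $|\varphi_j'| \leq B^{O(1)}|\xi|$ on $[-1,1]$ for every $j$ and $|I| \leq \varepsilon$, for all $s\in I$ we get
\begin{equation*}
    |\varphi_1(s) - \varphi_1(s_0)| \leq B^{O(1)}\varepsilon|\xi| \qquad\textrm{and}\qquad |\varphi_n(s)| \leq |\varphi_n(s_0)| + B^{O(1)}\varepsilon|\xi| \leq B^{O(1)}\varepsilon|\xi|.
\end{equation*}
Shrinking $\varepsilon_B$ if necessary, $\varphi_2' = -\varphi_1 - c_2\varphi_n$ then has constant sign on $I$ and satisfies $|\varphi_2'| \geq \tfrac12 B^{-O(1)}|\xi|$ there. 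The choice of $\varepsilon_B$ depends only on $B$, because the hypothesis $|\xi| \geq \varepsilon^{-1}\lambda\rho_2^{-1}$ absorbs $\lambda$.

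\emph{Step 3: conclusion.} Take $s_1 < s_2$ in the set. Then $|\varphi_2(s_2) - \varphi_2(s_1)| \leq 2\lambda\rho_2^{-1}$. On the other hand, by the mean value theorem and Step 2 this difference is at least $\tfrac12 B^{-O(1)}|\xi||s_2 - s_1|$. Hence $|s_2 - s_1| \lesssim B^{O(1)}\lambda\rho_2^{-1}|\xi|^{-1}$, so the set lies in an interval of that length.

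The main obstacle is Step 2, specifically controlling the error term $c_2\varphi_n$. The a priori bound $|\varphi_n| \lesssim B^{O(1)}|\xi|$ is not good enough, since it is comparable to $|\varphi_1|$ and could cancel it. The remedy is to use the smallness of $\varphi_n$ at the single point $s_0$ of the set, and to propagate it across $I$ using only the shortness $|I|\leq\varepsilon$.
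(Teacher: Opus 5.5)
Your proof is correct and follows essentially the same route as the paper. Both arguments use the identity $v_2' = -v_1 - (v_2\cdot\gamma^{(n+1)})v_n$, the lower bound $|v_1\cdot\xi| \geq B^{-O(1)}|\xi|$ obtained from membership at one point, and the smallness of $v_n\cdot\xi$, and then run a mean value/Taylor argument for $v_2\cdot\xi$ on the short interval. The only cosmetic difference is that the paper lower-bounds $v_2'\cdot\xi$ at a single point and controls the quadratic remainder via $|v_2''\cdot\xi|\leq B^{O(1)}|\xi|$. You instead propagate the lower bound on $\varphi_2'$ across all of $I$ using first-derivative bounds on $\varphi_1$ and $\varphi_n$.
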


Before proving Lemma~\ref{lem: dual overlap}, we make some basic observations concerning the dual basis introduced above. Let $\gamma \in \fkG(B)$ for some $B \geq 1$. By the formula $v_i(s) = \bdM_{\gamma}(s)^{-\top} e_i$ and \eqref{eq: G class 1}, we have 
\begin{equation} \label{eq: dual G 1}
    |\xi| = |\bdM_{\gamma}(s) \bdM_{\gamma}(s)^{-1} \xi| \leq B^{O(1)} \max_{1 \leq i \leq n} |v_i(s) \cdot \xi|
\end{equation}
for all $\xi \in \widehat{\R}^n$ and $s \in [-1,1]$. On the other hand, since $\gamma$ is a smooth non-degenerate curve, the functions $v_i \colon [-2,2] \to \R^n$ are smooth and, by \eqref{eq: G class 1}, satisfy
\begin{equation} \label{eq: dual G 2}
    \||v_i^{(j)}|\|_{L^{\infty}([-1,1])} \leq B^{O(1)} \quad\text{for $1 \leq i$, $j \leq n$,}
\end{equation}
where $v_i^{(j)}$ denotes the function formed by entry-wise differentiation of $v_i$.

\begin{proof}[Proof (of Lemma~\ref{lem: dual overlap})] Suppose $s_1,s_2 \in I$, and $\xi \in \lambda \Theta^{\circ}(s_1,\bdrho) \cap \lambda \Theta^{\circ}(s_2,\bdrho)$. Our goal is to show 
\begin{equation}\label{eq: dual overlap 1}
   |s_1 - s_2| \lesssim_B \lambda \rho_2^{-1} |\xi|^{-1},
\end{equation}
which implies the conclusion of the lemma.

By using the bounds in Definition~\ref{dfn: dual plate} which follow because $s_1 \in \lambda \Theta^\circ(s_2,\bdrho)$, and the monotonicity hypothesis on $\bdrho$, we have
\begin{equation}\label{eq: dual overlap 2}
  |v_j(s_1) \cdot \xi| \leq \lambda \rho_j^{-1} \leq \lambda \rho_2^{-1} \qquad \textrm{for $2 \leq j \leq n$.}
\end{equation}
By \eqref{eq: dual G 1} and the hypothesis $|\xi| \geq \varepsilon^{-1} \lambda \rho_2^{-1}$, this gives
\begin{equation*}
    |\xi| \leq B^{O(1)} \max\big\{ |v_1(s_1) \cdot \xi|, \lambda \rho_2^{-1}\big\} \leq B^{O(1)} \max\big\{ |v_1(s_1) \cdot \xi|, \varepsilon |\xi| \big\} .
\end{equation*}
Thus we conclude that
\begin{equation} \label{eq: dual overlap 3}
    |v_1(s_1) \cdot \xi| \geq B^{-O(1)} |\xi|,
\end{equation}
provided that $0 < \varepsilon \leq (1/2) B^{-O(1)}$.
   
    Differentiating the relation $v_i \cdot \gamma^{(j)} = \delta_{ij}$ from \eqref{eq: dual frame dfn}, we find that 
    \begin{equation}\label{eq: dual overlap 4}
    v_2'(t) = - v_1(t) - \big(v_2(t) \cdot \gamma^{(n+1)}(t)\big) v_n(t) \qquad \textrm{for all $t \in [-2,2]$.}   
    \end{equation}
    Note \eqref{eq: dual G 2} and \eqref{eq: G class 1} imply that
    \begin{equation} \label{eq: dual overlap 5}
        |v_2(t) \cdot \gamma^{(n+1)}(t)| \leq |v_2(t)| |\gamma^{(n+1)}(t)| \leq B^{O(1)} \qquad \textrm{for all $t \in [-1,1]$.}
    \end{equation}
    By \eqref{eq: dual overlap 4} and the triangle inequality,
    \begin{equation*}
        |v_2'(s_1) \cdot \xi|\geq |v_1(s_1) \cdot \xi| - |v_2(s_1) \cdot \gamma^{(n+1)}(s_1)||v_n(s_1) \cdot \xi|.
    \end{equation*}
    We use \eqref{eq: dual overlap 3} to lower bound the first term on the right-hand side. And we use \eqref{eq: dual overlap 5}, the $j = n$ case of \eqref{eq: dual overlap 2} and the hypothesis $|\xi| \geq \varepsilon^{-1} \lambda \rho_2^{-1}$ to upper bound the second term. Thus,
    \begin{align}
    \nonumber
        |v_2'(s_1) \cdot \xi| &\geq B^{-O(1)}|\xi| - B^{O(1)}\lambda \rho_2^{-1} \\
        \label{eq: dual overlap 6}
        &\geq \big(B^{-O(1)} - \varepsilon B^{O(1)}\big)|\xi| \geq (2B)^{-O(1)}|\xi|,
    \end{align}
    where the last inequality holds provided $\varepsilon$ is suitably small.

    On the other hand, by \eqref{eq: dual G 2}, we have
   \begin{equation}\label{eq: dual overlap 7}
       |v_2''(t) \cdot \xi| \leq |v_2''(t)| |\xi| \leq B^{O(1)} |\xi| \qquad \textrm{for all $t \in [-1,1]$.}
   \end{equation}
    Thus, by Taylor expansion of the function $v_2$, 
    \begin{align*}
        \big| \big(v_2(s_2) - v_2(s_1)\big) \cdot \xi \big| \geq \big((2B)^{-O(1)}  - B^{O(1)}  |s_1 - s_2|\big) |\xi| |s_1 - s_2|,
    \end{align*}
    where we have used \eqref{eq: dual overlap 6} to lower bound the linear term and \eqref{eq: dual overlap 7} to bound the remainder. Since $|s_1 - s_2| \leq |I| \leq \varepsilon$, it follows that
    \begin{equation}\label{eq: dual overlap 8}
       \sum_{j=1}^2 |v_2(s_j) \cdot \xi| \geq \big| \big(v_2(s_2) - v_2(s_1)\big) \cdot \xi \big| \gtrsim B^{-O(1)} |\xi| |s_1 - s_2|,
    \end{equation}
    provided $\varepsilon > 0$ is suitably small. Finally, since $\xi \in \lambda \Theta^{\circ}(s_1,\bdrho) \cap \lambda \Theta^{\circ}(s_2,\bdrho)$, we may apply Definition~\ref{dfn: dual plate} to upper bound the left-hand side of \eqref{eq: dual overlap 8} by $2 \lambda \rho_2^{-1}$, and thereby deduce \eqref{eq: dual overlap 1}, as required. 
\end{proof}




\section{Proof of \texorpdfstring{$L^p$-improving}{Lp improving} II: Kernel decomposition}\label{sec: Lp improv II}

Given $a \in \cS(A)$ for $A \geq 1$, to study the kernel $\cK_{\gamma}[a]$, we perform a rather intricate decomposition of $a$.




\subsection{Initial decomposition} We first perform a routine dyadic decomposition, which partially motivates the following symbol class.

\begin{definition} \label{dfn: symb depth N}
 For $\gamma \colon [-2,2] \to \R^n$ a smooth, non-degenerate curve, $A \geq 1$, $1 \leq N \leq n$ and $\lambda \geq 1$, let $\cS_{\gamma}(A; N; \lambda)$ denote the class of all $a \in \cS(A)$ satisfying:\vspace{2pt}
    \begin{enumerate}[\normalfont(i)]
        \item $\supp{} a \subseteq [    -1, 1] \times \{ x \in \R^n : (2A)^{-1}\lambda \leq |x| \leq 2A\lambda \}$;\vspace{2pt}
    
        \item for all $(s; x) \in \supp{} a$, $\displaystyle \max_{1 \leq j \leq N} | \gamma^{(j)}(s) \cdot x | \geq A^{-1} |x|$.
 \end{enumerate}
\end{definition}

Primarily, we are interested in the case $N = n$, in which case property (ii) follows automatically for some $A = O_{\gamma}(1)$ by the non-degeneracy hypothesis. However, it is useful to work with general $N$, in anticipation of inductive arguments used below.

\begin{lemma}\label{lem: initial decomp} Let $\gamma \colon [-2,2] \to \R^n$ be a smooth, non-degenerate curve and $a \in \cS(A)$ for $A \geq 1$. Then $a = \sum_{k=0}^{\infty} a^k$ where: 
\begin{enumerate}[\normalfont(a)]
    \item $a^0 \in C^{\infty}_tL^{\infty}_x(\R \times \R^n)$ satisfies
    \begin{equation*}
        \|\cK_{\gamma}[a^0]\|_{L^{\infty}(\R^n)} \lesssim_A 1 \qquad \textrm{and} \qquad \supp{} a^0 \subseteq [-1,1] \times \{x \in \R^n : |x| \leq 2\};
    \end{equation*}
    \item $a^k \in \cS_{\gamma}(O(A); n; 2^k)$ for all $k \in \N$. 
\end{enumerate}
\end{lemma}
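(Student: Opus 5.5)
The plan is a routine Littlewood--Paley decomposition in the $x$ variable. Fix a smooth bump $\beta_0 \in C_c^\infty(\R^n)$ with $0 \le \beta_0 \le 1$, $\beta_0(x) = 1$ for $|x| \le 1$ and $\supp{}\beta_0 \subseteq \{|x| \le 2\}$. Set $\beta(x) \coloneq \beta_0(x) - \beta_0(2x)$, which is supported in $\{1/2 \le |x| \le 2\}$, and for $k \in \N$ let $\beta_k(x) \coloneq \beta(2^{-k}x)$. Then $\beta_0 + \sum_{k \ge 1} \beta_k \equiv 1$, with locally finite sum and all terms non-negative. Define
\begin{equation*}
a^0(t,x) \coloneq a(t,x)\beta_0(x) \qquad \textrm{and} \qquad a^k(t,x) \coloneq a(t,x)\beta_k(x) \quad \textrm{for $k \in \N$,}
\end{equation*}
so that $a = \sum_{k=0}^\infty a^k$ pointwise. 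Each $a^k$ is non-negative, bounded, measurable, and smooth in $t$. Since the cutoff does not depend on $t$, we have $|\partial_t^j a^k| \le |\partial_t^j a| \le A$ for $0 \le j \le n$, and $\supp{} a^k \subseteq [-1,1] \times \R^n$. Hence $a^k \in \cS(A) \subseteq \cS(O(A))$.

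For part (a), the support condition holds by construction. The bound on $\cK_\gamma[a^0]$ follows from the trivial estimate
\begin{equation*}
|\cK_\gamma[a^0](x)| \le \int_{-1}^1 |a(t,x)|\,\ud t \le 2A,
\end{equation*}
since $\beta_0 \le 1$.

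For part (b), fix $k \ge 1$ and $\lambda = 2^k$. Property (i) of Definition~\ref{dfn: symb depth N} holds once we replace $A$ by $A' \coloneq C_\gamma A$ with $C_\gamma \ge 1$: the support of $a^k$ lies in $\{2^{k-1} \le |x| \le 2^{k+1}\}$, which is contained in $\{(2A')^{-1}\lambda \le |x| \le 2A'\lambda\}$. Property (ii) with $N = n$ is the only point requiring the curve. By \eqref{eq: G class 4}, applied with $\gamma \in \fkG(B)$ for some $B = O_\gamma(1)$, we get
\begin{equation*}
\max_{1 \le j \le n}|\gamma^{(j)}(s)\cdot x| \ge B^{-O(1)}|x| \qquad \textrm{for all $s \in [-1,1]$.}
\end{equation*}
It therefore suffices to take $C_\gamma \ge B^{O(1)}$, and then (ii) holds with $A'$. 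Enlarging $A$ to $A'$ preserves the derivative bounds in $\cS$, so $a^k \in \cS_\gamma(A'; n; 2^k)$ with $A' = O_\gamma(A)$. This is consistent with the paper's remark that property (ii) "follows automatically for some $A = O_\gamma(1)$".

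I expect no real obstacle. The only care needed is bookkeeping: the constant in $O(A)$ depends on $\gamma$ through $B$, which the paper treats as acceptable. If a strictly $\gamma$-independent constant were intended, one would instead need the non-degeneracy lower bound to be absorbed, but the statement's implicit dependence conventions allow $\gamma$-dependence.
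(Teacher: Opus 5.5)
Your proposal is correct and is exactly the standard dyadic decomposition in $x$ that the paper invokes without giving further detail; property (ii) of Definition~\ref{dfn: symb depth N}, obtained via \eqref{eq: G class 4} and the $t$-support in $[-1,1]$, is indeed the only place the curve enters. Your remark that the constant in $O(A)$ must then depend on $\gamma$ through $B$ matches the paper's own comment that (ii) holds automatically for some $A = O_{\gamma}(1)$ when $N = n$.
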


\begin{proof}
    This follows by a standard dyadic decomposition.
\end{proof}




\subsection{Preliminaries for the main decomposition} Given a dyadically localised symbol $a^k \in \cS_{\gamma}(A; N; 2^k)$, we perform a further decomposition to isolate decay properties of the oscillatory kernel $\cK_{\gamma}[a^k]$. The description of this decomposition is somewhat involved, and requires a number of preliminary definitions. 

We first introduce sets used to index the pieces of the decomposition. 

\begin{definition}\label{dfn: indices 1} For $1 \leq N \leq n$ and $k \in \N$, define $L_N(N;k) \coloneq \ceil{k/N}$. For $1 \leq M \leq N \leq n$ and $\bdell = (\ell_M, \dots, \ell_N) \in \N_0^{N - M + 1}$, further define
\begin{equation*}
  L_m\bigl(N;k;\ell_{m+1},\dots,\ell_N\bigr)
\coloneq
\Big\lceil
\frac{1}{m}
\Big(
k - \sum_{j=m+1}^N j \ell_j
\Big)
\Big\rceil \qquad \textrm{for $M \leq m \leq N-1$.}
\end{equation*}
\end{definition}

\begin{definition}[Indexing set]\label{dfn: indices 2} For $1 \leq M < N \leq n$ and $k \in \N$, define:
\begin{enumerate}[\normalfont(a)]
    \item $\cL(N, N; k) \coloneq \{L_N(N;k)\}$.\vspace{2pt}
    \item Let $\cL(M, N; k)$ denote the set of all $\bdell = (\ell_M, \dots, \ell_N) \in \N_0^{N - M + 1}$ which satisfy:\vspace{2pt}
\begin{enumerate}[\normalfont(i)]
    \item $0 \leq \ell_N < L_N(N;k)$;
    \item $0 \leq \ell_m <
L_m\bigl(N;k;\ell_{m+1},\dots,\ell_N\bigr)$ for all $M+1 \leq m \leq N$;
\item $\ell_M =
L_M\bigl(N;k;\ell_{M+1},\dots,\ell_N\bigr)$.
\end{enumerate}
\item Furthermore, define
\begin{equation}\label{eq: indicies}
    \cL(N; k) \coloneq \bigcup_{M=1}^N \cL(M, N; k) \qquad \textrm{and} \qquad  \cL(k) \coloneq \cL(n; k)
\end{equation}
\end{enumerate}

\end{definition}

Given $a^k \in \cS_{\gamma}(A; N; 2^k)$, we shall write $a^k = \sum_{\bdell \in \cL(N; k)} a^{k, \bdell}$ where each $a^{k, \bdell}$ satisfies certain kernel estimates and support properties. The support properties are described in terms of the plates introduced in Definition~\ref{dfn: plate}. For this, we need to specify the relevant centres and principal radii.

\begin{remark}
    We note that
\[ \cL(M,N;k) = \left\{ \bdell \in \N_0^{N-M+1} : k \leq \sum_{m = M}^N m \ell_m < k + M\ \text{and}\ \sum_{m = M+1}^N m \ell_m < k \right\}. \]
%
%
%
%
%
%
%
%
%
%
%
%
%
%
The recursive description of $\cL(M,N;k)$ in Definition \ref{dfn: indices 2} arises naturally from the recursive construction by which the functions $a^{k,\bdell}$ are defined.
\end{remark}

\begin{definition}[Plate families]\label{dfn: plate family} For $1 \leq M \leq N \leq n$, $C \geq 1$, and $\bdell \in \N_0^{N-M+1}$ with $\bdell = (\ell_M, \dots, \ell_N)$, define the following:
\begin{enumerate}[\normalfont(a)]
    \item Given $\bdnu = (\nu_M, \dots, \nu_N) \in \Z^{N-M+1}$, let 
    \begin{equation}\label{eq: s nu ell centres}
        s_{\bdnu}^{\bdell} \coloneq  \sum_{j = M}^N 2^{-\sum_{i = j}^N \ell_i} \nu_j 
    \end{equation}
    and define
    \begin{equation*}
    \fkN(\bdell) \coloneq  \left\{ (\nu_M, \dots, \nu_N) \in \Z^{N-M+1} : |\nu_j| \leq 2^{\ell_j}, \,M \leq j \leq N\,; \left|s_{\bdnu}^{\bdell}\right| \leq 2\right\}.
\end{equation*}
    \item Let $\bdrho(\bdell)  = (\rho_1(\bdell), \dots, \rho_n(\bdell)) \in (0, 1]^n$ where
        \begin{equation}\label{eq: ej exp}
 \rho_j(\bdell) \coloneq 2^{-D_j(\bdell)}\ \ \textrm{for}\ \ D_j(\bdell) \coloneq \sum_{i = \max\{j+1, M\}}^N (i - j) \ell_i,\ \ 1 \leq j \leq N-1,
    \end{equation}
    and $D_j(\bdell) \coloneq 0$ for $N \leq j \leq n$.
    \item Given $\gamma \colon [-2,2] \to \R^n$ a smooth, non-degenerate curve, let 
\begin{equation}\label{eq: theta nu ell}
\Theta_{\bdnu}^{\bdell} \coloneq \Theta_{M,N}(s_{\bdnu}^{\bdell}, \bdrho(\bdell), C)
\end{equation}
for all $\bdnu = (\nu_M, \dots, \nu_N) \in \fkN(\bdell)$.
\item Finally, for $k \in \N$ let
\begin{equation*}
    \Lambda_{\gamma}(\bdell; C) \coloneq \bigcup_{\bdnu \in \fkN(\bdell)} \Theta_{\bdnu}^{\bdell} \qquad \textrm{and} \qquad \Lambda_{\gamma}(k, \bdell; C) \coloneq 2^k \Lambda_{\gamma}(\bdell; C).
\end{equation*}
\end{enumerate}
\end{definition}
Although the definitions of $\Theta_{\bdnu}^{\bdell}$ depends on $C$, since this is a technical parameter we suppress it in the notation. 




\subsection{The main decomposition} With the above definitions, the main decomposition reads as follows.

\begin{proposition}[Kernel decomposition]\label{prop: fine decomp} For $A$, $B \geq 1$ and $1 \leq N \leq n$, there exist $C  \geq 1$ and $K \in \N$ such that the following holds.  Let $\gamma \in \fkG(B; N)$ and $a^k \in \cS_{\gamma}(A; N; 2^k)$ for some $k \in \N$ with $k \geq K$. We may write
\begin{equation*}
    a^k = \sum_{\bdell \in \cL(N; k)} a^{k, \bdell}
\end{equation*}
for non-negative symbols $a^{k, \bdell} \in C^{\infty}_tL^{\infty}_x(\R \times \R^n)$ which, for all $\bdell \in \cL(N; k)$, satisfy:
\begin{enumerate}[\normalfont(i)]
    \item Kernel estimate: 
    \begin{equation}\label{eq: decomp K}
        \|\cK_{\gamma}[a^{k, \bdell}]\|_{L^{\infty}(\R^n)} \lesssim_{A, B} 2^{-\|\bdell\|};
    \end{equation}
 \item Support condition: 
 \begin{equation}\label{eq: decomp S}
   \supp{} a^{k, \bdell} \subseteq [-1,1] \times \Lambda_{\gamma}(k; \bdell; C).  
 \end{equation}
\end{enumerate}
\end{proposition}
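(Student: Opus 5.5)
We will prove Proposition~\ref{prop: fine decomp} by induction on $N$, peeling off one derivative at a time via a smooth partition of unity in the $(s,x)$ variables adapted to the sizes of $|\gamma^{(N)}(s)\cdot x|$. Write $\lambda = 2^k$ and $\phi(s;x)\coloneq \gamma(s)\cdot x$, so that $\partial_s^j\phi = \gamma^{(j)}(s)\cdot x$.

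\textbf{Base case $N=1$.} Here $|\gamma'(s)\cdot x|\geq A^{-1}|x|\sim \lambda$ on the support. Integration by parts in $t$ (using the $n$ bounded $t$-derivatives of $a$) gives $|\cK_\gamma[a^k]|\lesssim \lambda^{-1}=2^{-\ell_1}$ with $\ell_1=k$, which is the unique element of $\cL(1,1;k)$. The support condition is immediate: the plate with $D_j=0$ is just a $C\lambda$-ball intersected with the region $|\gamma'\cdot x|\gtrsim\lambda$.

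\textbf{Inductive step.} Given $a^k\in\cS_\gamma(A;N;2^k)$, consider the index $\ell_N$. Run a Littlewood--Paley-type decomposition on the quantity
\[
\max_{s'\in I}|\gamma^{(N)}(s')\cdot x|/\lambda ,
\]
where $I$ ranges over intervals of length $2^{-\ell_N}$ with centres $2^{-\ell_N}\nu_N$. Equivalently, split $[-1,1]$ in $t$ by a smooth partition $\sum_{\nu_N}\chi(2^{\ell_N}t-\nu_N)$ and isolate the set where $|\gamma^{(N)}(s_\nu)\cdot x|\sim \lambda 2^{-N\ell_N}$. This scaling is chosen so that the $N$-th order van der Corput bound on an interval of length $2^{-\ell_N}$ matches $2^{-\ell_N}$. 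On each such piece, rescale $t=s_\nu+2^{-\ell_N}u$. The rescaled curve
\[
\tilde\gamma(u)=\bigl(2^{-j\ell_N}\gamma^{(j)}(s_\nu)\bigr)_j\text{-normalised}
\]
then has phase with $N$-th derivative of size $\lambda 2^{-N\ell_N}$. Two regimes arise:
\begin{itemize}
\item If $\ell_N=\lceil k/N\rceil$, the $N$-th derivative dominates. Van der Corput of order $N$ (in the form of Taylor expansion plus integration by parts, with the symbol's $t$-derivatives controlled) gives kernel bound $2^{-\ell_N}$. This is the class $\cL(N,N;k)$.
\item If $\ell_N<L_N$, then on the region $|\gamma^{(N)}(s)\cdot x|\lesssim 2^{-N\ell_N}\lambda$ with $|\gamma^{(N)}\cdot x|$ comparable at a dyadic scale, the rescaled symbol lies in $\cS_{\tilde\gamma}(O(A);N-1;2^{k-N\ell_N})$. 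Here the lower bound of Definition~\ref{dfn: symb depth N}(ii) for indices $\leq N-1$ comes from~\eqref{eq: G class 4} and the smallness of the $N$-th component. The rescaled curve belongs to $\fkG(O_B(1);N-1)$.
\end{itemize}
In the second regime, apply the induction hypothesis with $k$ replaced by $k-N\ell_N$, gaining a further factor $2^{-\sum_{m<N}\ell_m}$. Rescaling back costs the Jacobian $2^{-\ell_N}$, which yields~\eqref{eq: decomp K}. The index bookkeeping reproduces exactly the recursion $L_m(N;k;\ell_{m+1},\dots,\ell_N)$ of Definition~\ref{dfn: indices 1}. When $k-N\ell_N<K$, the leftover piece is handled trivially and absorbed into the $M$-terminal case.

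\textbf{Support condition.} Under the rescaling, the condition $|\tilde\gamma^{(j)}(u)\cdot x|\leq C\tilde\rho_j$ transforms into $|\gamma^{(j)}(s)\cdot x|\lesssim 2^{j\ell_N}\cdot$(old radius). Composing the nested rescalings gives the exponents $D_j(\bdell)=\sum_{i>j}(i-j)\ell_i$ and the centres $s_{\bdnu}^{\bdell}$ of~\eqref{eq: s nu ell centres}. The lower bounds $|\gamma^{(j)}\cdot x|\geq(2C)^{-1}\rho_j$ for $M\leq j\leq N$ record the dyadic localisation at each stage.

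\textbf{Main obstacle.} The delicate point is transferring information back from the rescaled curve. The rescaled curve $\tilde\gamma$ is not literally $\gamma$, and its derivatives $\tilde\gamma^{(j)}(u)$ are Taylor combinations of the $\gamma^{(i)}(s_\nu)$. One must therefore check that the plates for $\tilde\gamma$ at centre $u$ correspond, up to the constant $C$, to plates for $\gamma$ at the centre $s_\nu+2^{-\ell_N}u$. This needs the monotonicity $\rho_1\leq\cdots\leq\rho_n$ together with a Taylor argument like the one in Lemma~\ref{lem: plate sep}, so that the cross terms $|s-s'|^{i-j}\rho_i$ are dominated by $\rho_j$. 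It also requires that $x$-dependent symbol cutoffs preserve the $t$-derivative bounds uniformly, which should follow since the cutoffs are smooth in $t$ at scale $2^{-\ell_N}$. The number of stages is $N$, fixed, so $C$ and $K$ depend only on $A,B,n$.
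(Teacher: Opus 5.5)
Your architecture is the paper's: induction on $N$, rescaling by $\Dil_\gamma(N;s_\nu,2^{-\ell_N})$ to pass from frequency $2^k$ to $2^{k-N\ell_N}$, and treating $\ell_N=\ceil{k/N}$ directly. But you decompose in the wrong quantity, and the inductive step fails as written.

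You localise to $|\gamma^{(N)}(s_\nu)\cdot x|\sim\lambda 2^{-N\ell_N}$ and attribute the depth-$(N-1)$ non-degeneracy of the rescaled symbol to the smallness of the $N$-th component. However, the $N$-th derivative is never small on the pieces that get rescaled. The plates in \eqref{eq: decomp S} have $\rho_N(\bdell)=1$, so they impose $|\gamma^{(N)}(s^{\bdell}_{\bdnu})\cdot x|\geq (2C)^{-1}2^k$. The paper's pieces with $\ell_N>0$ satisfy exactly this (see \eqref{eq: decomp 4 16}), whereas your pieces with $\ell_N$ large violate it.

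Moreover, where $|\gamma^{(N)}(s)\cdot x|\ll\lambda$, property (ii) of Definition~\ref{dfn: symb depth N} already forces $|\gamma^{(j)}(s)\cdot x|\gtrsim\lambda$ for some $j\leq N-1$. That region is therefore of depth $N-1$ at frequency $2^k$ with no rescaling needed. If you rescale it anyway, then since $\tilde\gamma^{(j)}(u)\cdot\tilde x=2^{-j\ell_N}\gamma^{(j)}(s)\cdot x$ and $\tilde\gamma$ has bounded derivatives, you get $|\tilde x|\gtrsim 2^{-(N-1)\ell_N}\lambda\gg 2^{k-N\ell_N}$. So the rescaled symbol is not in $\cS_{\tilde\gamma}(O(A);N-1;2^{k-N\ell_N})$, and the recursion producing $\cL(N;k)$ breaks.

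Conversely, your scheme does not decompose the genuinely singular region at all. This is where $|\gamma^{(N)}\cdot x|\sim\lambda$ but $\gamma^{(1)}\cdot x,\dots,\gamma^{(N-1)}\cdot x$ are all small. That region fails property (ii) at depth $N-1$. Nor can it be put wholesale into the class $\ell_N=\ceil{k/N}$, whose plates require $|\gamma^{(j)}(s_\nu)\cdot x|\lesssim 2^{k-(N-j)\ceil{k/N}}$ for $j<N$.

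The missing idea is the paper's Steps 1--2.
\begin{enumerate}
\item First split off, via the cutoff $1-H_N^k$, the part where some $|\gamma^{(j)}(s)\cdot x|\geq\varepsilon 2^k$ with $j\leq N-1$. Apply the induction hypothesis to it directly; these are the indices with $\ell_N=0$.
\item On the remainder, \eqref{eq: decomp 2 2} gives $|\gamma^{(N)}(s)\cdot x|\sim|x|$. So $s\mapsto\gamma^{(N-1)}(s)\cdot x$ is monotone with a unique nearby root $\theta(x)$.
\item Define $\ell_N$ by dyadically localising the quasi-distance $G_N^k(s;x)$ to the singular cone. This is built from $|s-\theta(x)|$ and $(|\gamma^{(j)}(\theta(x))\cdot x|/2^k)^{1/(N-j)}$ for $1\leq j\leq N-2$.
\end{enumerate}

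This localisation supplies two things your partition does not.
\begin{itemize}
\item It confines $s$ to an $O(2^{-\ell_N})$-neighbourhood of $\theta(x)$, so for each $x$ only $O(1)$ values of $\nu$ contribute. That is what lets the rescaled kernels sum to $O(2^{-\ell_N}2^{-\|\bdell'\|})$ instead of losing a factor of the number of $\nu$.
\item It yields the two-sided bound \eqref{eq: verify 2a}. This is precisely what gives $|\tilde x|\sim 2^{k-N\ell_N}$ and property (ii) at depth $N-1$ in Lemma~\ref{lem: verify rescaling}.
\end{itemize}

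By contrast, the obstacle you single out is not a real one. The exact identity \eqref{eq: decomp 4 12}, together with $s^{\ell}_{\nu}+2^{-\ell}s^{\bdell'}_{\bdnu'}=s^{\bdell}_{\bdnu}$, transfers the plate conditions from the rescaled curve back to $\gamma$ with no Taylor argument.
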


Here, for $\bdell = (\ell_M, \dots, \ell_N) \in \N_0^{N-M+1}$, we write $\|\bdell\| \coloneq \ell_M + \cdots + \ell_N$. Recall that the class $\fkG(B; N)$ was defined in Definition~\ref{dfn: G class}. In practice, we are only interested in the case $N = n$; however, for the purpose of proving Proposition~\ref{prop: fine decomp}, it is useful to work with the more general formulation above. 

Proposition~\ref{prop: fine decomp} is a pointwise variant of a decomposition argument developed in \cite{PS2007, KLO2023, GovindanSheri2024} and related works. The proof essentially follows by unpacking the inductive argument used in \cite{KLO2023}. A key ingredient is a scaling symmetry of the class of smooth, non-degenerate curves. We describe this scaling property, and other prerequisites, in the next subsection, before presenting the proof of Proposition~\ref{prop: fine decomp} in \S\ref{subsec: decomp proof} below.




\subsection{Further properties of non-degenerate curves}\label{subsec: rescaling} We briefly describe some additional properties of non-degenerate curves, used in the proof of Proposition~\ref{prop: fine decomp}.\medskip

\noindent \textit{Rescaling.} Let $\gamma \colon [-2,2] \to \R^n$ be a smooth, non-degenerate curve. Given $2 \leq N \leq n$ and $s \in [-2,2]$, let
\begin{equation*}
   V_N(s) \coloneq \mathrm{span}\big\{\gamma^{(1)}(s),\dots, \gamma^{(N)}(s)\big\}. 
\end{equation*}
The non-degeneracy hypothesis ensures that $\dim V_N(s) = N$. For $\delta \in (0,1)$, we define an anisotropic dilation $\Dil_{\gamma}(N; s, \delta) \colon \R^n \to \R^n$ by specifying that
\begin{equation}\label{eq: scaling map 1}
\Dil_{\gamma}(N; s, \delta) \colon \gamma^{(j)}(s) \mapsto \delta^j \gamma^{(j)}(s)  \qquad \textrm{for $1 \leq j \leq N$}
\end{equation}
and
\begin{equation*}
\Dil_{\gamma}(N; s, \delta) \colon v \mapsto \delta^N v \qquad \textrm{ for $v \in  V_N(s)^{\perp}$}.
\end{equation*} 
It follows immediately from the definition that
\begin{equation}\label{eq: scaling map 2}
\delta^N|x|
\lesssim_B
|\Dil_{\gamma}(N; s, \delta)x|
\lesssim_B
\delta|x| \qquad \textrm{for all $x \in \R^n$.}
\end{equation}
As in \cite{KLO2023}, we use this dilation to define a family of rescalings of $\gamma$.

\begin{definition} Assuming $[s-2\delta, s+2\delta] \subseteq [-2,2]$, define the rescaled curve $\gamma_N^{s,\delta} \colon [-2,2] \to \R^n$ by
\begin{equation}\label{eq: rescaled curve}
    \gamma_N^{s,\delta}(t) \coloneq \Dil_{\gamma}(N; s, \delta)^{-1} \big(\gamma(s + \delta t) - \gamma(s)\big) \qquad \textrm{for $t \in [-2,2]$}.
\end{equation}
\end{definition}

The following lemma verifies non-degeneracy assumptions for the rescaled curve. 

\begin{lemma}\label{lem: rescale} 
Let  $\gamma \in \fkG(B; N)$ for some $B \geq 1$ and $2 \leq N \leq n$. Then
\begin{equation*}
    \gamma_N^{s, \delta} \in \fkG(O_B(1); N-1) \qquad \textrm{for all $s \in [-1,1]$ and $0 < \delta \leq 1$.}
\end{equation*}
\end{lemma}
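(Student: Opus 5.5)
The plan is to compute the derivatives of $\gamma_N^{s,\delta}$ explicitly via Taylor expansion around $s$, and then check the two conditions in \eqref{eq: G class 1} (with $N-1$ in place of $N$) separately.

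\textbf{Step 0: conditioning of the dilation.} Fix $s \in [-1,1]$ and write $D \coloneq \Dil_{\gamma}(N;s,\delta)$. The vectors $\gamma^{(1)}(s),\dots,\gamma^{(N)}(s)$ have length at most $B$ and wedge of size at least $B^{-1}$. Hence their Gram matrix has determinant $\gtrsim B^{-2}$ and entries $\lesssim B^2$, so by Cramer's rule the coefficients expressing any unit vector of $V_N(s)$ in this basis are $B^{O(1)}$. Adjoining an orthonormal basis of $V_N(s)^{\perp}$, this gives
\[
\|D^{-1}\|_{\mathrm{op}} \lesssim_B \delta^{-N},
\]
which is the upper half of \eqref{eq: scaling map 2} inverted. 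I also need $D^{-1}\gamma^{(i)}(s) = \delta^{-i}\gamma^{(i)}(s)$ for $i \leq N$, which is immediate from the definition.

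\textbf{Step 1: derivative bounds.} For $1 \leq j \leq 2n$, differentiating \eqref{eq: rescaled curve} gives
\[
(\gamma_N^{s,\delta})^{(j)}(t) = \delta^j D^{-1}\gamma^{(j)}(s+\delta t).
\]
There are two cases.
\begin{itemize}
\item If $j > N$, Step 0 gives the bound $\lesssim_B \delta^{j-N} B \leq B^{O(1)}$.
\item If $j \leq N$, Taylor expand $\gamma^{(j)}(s+\delta t)$ to order $N-j$:
\[
\gamma^{(j)}(s+\delta t) = \sum_{i=j}^{N}\gamma^{(i)}(s)\frac{(\delta t)^{i-j}}{(i-j)!} + R_j(t), \qquad |R_j(t)| \lesssim_B \delta^{N+1-j}.
\]
Applying $\delta^j D^{-1}$ and using Step 0, we obtain
\[
(\gamma_N^{s,\delta})^{(j)}(t) = P_j(t) + E_j(t), \qquad P_j(t) \coloneq \sum_{i=j}^{N}\gamma^{(i)}(s)\frac{t^{i-j}}{(i-j)!}, \qquad |E_j(t)| \lesssim_B \delta^{j}\delta^{-N}\delta^{N+1-j} = \delta.
\]
\end{itemize}
Since $|t| \leq 2$, this gives the required $O_B(1)$ bounds on all derivatives up to order $2n$. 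Here I use the standing assumption $[s-2\delta,s+2\delta] \subseteq [-2,2]$ so that $\gamma$ is defined at the relevant points.

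\textbf{Step 2: lower bound on the $(N-1)$-wedge for $t \in [-1,1]$.} The vectors $P_1(t),\dots,P_{N-1}(t)$ are given by an $(N-1)\times N$ unit upper-triangular coefficient matrix acting on the basis $\gamma^{(1)}(s),\dots,\gamma^{(N)}(s)$. Their wedge therefore has coefficient exactly $1$ on $\gamma^{(1)}(s)\wedge\cdots\wedge\gamma^{(N-1)}(s)$, with the remaining coefficients $O(1)$. By Step 0 the induced map on $\Lambda^{N-1}$ is quantitatively injective, so
\[
|P_1(t)\wedge\cdots\wedge P_{N-1}(t)| \geq B^{-O(1)}.
\]
Since every $|P_j| \leq B^{O(1)}$ and every $|E_j| \lesssim_B \delta$, multilinearity yields the desired lower bound for the wedge of $(\gamma_N^{s,\delta})^{(j)}(t)$, $1 \leq j \leq N-1$, provided $\delta \leq \delta_0(B)$ is sufficiently small.

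\textbf{Step 3: the case $\delta_0 \leq \delta \leq 1$.} I expect this regime to be the main obstacle, because the perturbation $E_j$ is no longer small. Here I would argue directly instead. Using $|a_1\wedge\cdots\wedge a_N| \leq |a_1\wedge\cdots\wedge a_{N-1}|\,|a_N|$, the hypothesis $\gamma \in \fkG(B;N)$ gives
\[
|\gamma^{(1)}\wedge\cdots\wedge\gamma^{(N-1)}| \geq B^{-2} \quad \textrm{at } s+\delta t.
\]
The rescaled wedge equals $\delta^{N(N-1)/2}\,\Lambda^{N-1}D^{-1}$ applied to this $(N-1)$-vector. Since $D$ has operator norm $\lesssim_B 1$, the smallest singular value of $\Lambda^{N-1}D^{-1}$ is $\gtrsim_B 1$, and all powers of $\delta$ are $\gtrsim_{B} 1$ because $\delta \geq \delta_0(B)$. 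The subtle point is that $s+\delta t$ may leave $[-1,1]$, where \eqref{eq: G class 1} gives no control. I would handle this either by shrinking $\delta_0$ and covering with the small-$\delta$ argument, or by using the lower bound on a slightly enlarged interval. The latter is available with constants depending on $\gamma$, which is harmless in all applications since $B = O_\gamma(1)$.
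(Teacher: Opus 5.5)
Your Steps 0--2 are correct. They are essentially the standard argument behind the small-$\delta$ version of this lemma in \cite{GovindanSheri2024} and \cite{KLO2023}. The paper gives no argument of its own, only these citations and the remark that the remaining range of $\delta$ is trivial. Three ingredients combine: the conditioning $\|D^{-1}\|\lesssim_B\delta^{-N}$, which uses the wedge bound at $s\in[-1,1]$; the identity $(\gamma_N^{s,\delta})^{(j)}(t)=\delta^jD^{-1}\gamma^{(j)}(s+\delta t)$; and the comparison with the unit upper-triangular model $P_j(t)$ up to errors $O_B(\delta)$. Together they give the derivative bounds for every admissible $\delta$, and the wedge lower bound for $0<\delta\le\delta_0(B)$. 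Your Step 3 argument is also valid whenever $[s-\delta,s+\delta]\subseteq[-1,1]$.

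\textbf{The gap.} The genuine gap is the remaining case of Step 3 ($\delta\ge\delta_0$ with $s$ near $\pm1$), and neither proposed fix closes it.

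\emph{Shrinking $\delta_0$ and covering.} This has no content: $\delta$ is prescribed, and the wedge condition is pointwise in $t$.

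\emph{Allowing $\gamma$-dependent constants.} This changes the statement, and it is not harmless here. In the proof of Proposition~\ref{prop: fine decomp} the lemma is applied to arbitrary members of $\fkG(B;N)$, including curves that are themselves rescalings, so only $B$-dependence survives the induction.

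\textbf{The statement fails in this regime.} In fact no argument can work, because here the statement is false. Membership in $\fkG(B;N)$ gives no quantitative control of $\gamma^{(1)}\wedge\cdots\wedge\gamma^{(N-1)}$ off $[-1,1]$. Yet for $\delta\sim 1$ the rescaled wedge at $t$ is comparable to that quantity at $s+\delta t$.

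Concretely, let $n=N=3$ and fix $\phi\in C^\infty(\R)$ with $0\le\phi\le 1$, $\phi=1$ on $[-1,1]$ and $\phi=0$ on $[5/4,2]$. For $0<\eta\le1$ put $r_\eta=\eta+(1-\eta)\phi$. Let $\gamma_\eta(0)=0$ and $\gamma_\eta'=(1,F_\eta,G_\eta)$, where $F_\eta(0)=G_\eta(0)=0$ and $(F_\eta',G_\eta')=r_\eta(\cos t,\sin t)$.
\begin{itemize}
\item $\det(\gamma_\eta',\gamma_\eta'',\gamma_\eta''')=r_\eta^2$, which is positive on $[-2,2]$ and equal to $1$ on $[-1,1]$.
\item All derivatives up to order $6$ are bounded independently of $\eta$, so $\gamma_\eta\in\fkG(B;3)$ for a fixed $B$.
\item However, $|\gamma_\eta'\wedge\gamma_\eta''|\le 3r_\eta=3\eta$ on $[5/4,2]$.
\end{itemize}
Take $s=1$ and $\delta=1/2$, which is admissible since $[0,2]\subseteq[-2,2]$. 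The wedge of the first two derivatives of $(\gamma_\eta)_3^{1,1/2}$ at $t=1$ equals $\delta^3\,D^{-1}\gamma_\eta'(3/2)\wedge D^{-1}\gamma_\eta''(3/2)$. Its size is at most $\delta^3\|D^{-1}\|^2\cdot 3\eta\lesssim_B\eta$. So no constant depending only on $B$ works, even allowing dependence on $\delta$.

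\textbf{What can be proved.} The true statement is the one your Steps 0--2 establish, namely the case $0<\delta\le\delta_0(B)$. Alternatively, one can add the hypothesis $[s-\delta,s+\delta]\subseteq[-1,1]$, or build a wedge lower bound on a larger interval into the class $\fkG$. The paper applies the lemma with $\delta=2^{-\ell}$, $\ell\ge 1$, so its remark that the remaining range is trivial is incorrect as stated, and the application needs some such adjustment.
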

\begin{proof} See \cite[p. 10]{KLO2023} or \cite[Lemma 12]{GovindanSheri2024}. The result is stated in \cite[Lemma 12]{GovindanSheri2024} for small values of $\delta$, but it is clear it also holds trivially for the remaining range.
\end{proof}

\noindent \textit{Non-degeneracy hierarchy.} For $B \geq 1$ and $2 \leq N \leq n$, we have 
\begin{equation}\label{eq: heirarchy}
\fkG(B; N) \subseteq \fkG(B^2; N-1).    
\end{equation}
 Indeed, using \eqref{eq: G class 1}, this follows from the inequality
\begin{align*}
    B^{-2} &\leq B^{-1} |\gamma^{(1)}(s)\wedge\ldots\wedge \gamma^{(N)}(s)|\\
    &\leq B^{-1} |\gamma^{(N)}(s)| |\gamma^{(1)}(s)\wedge\ldots\wedge \gamma^{(N-1)}(s)|\\
    &\leq |\gamma^{(1)}(s)\wedge\ldots\wedge \gamma^{(N-1)}(s)|,
\end{align*}
and that $\||\gamma^{(j)}| \|_{L^\infty([-2, 2])} \leq B \leq B^2$ for $1 \leq j \leq 2n$.




\subsection{Proof of the main decomposition}\label{subsec: decomp proof} We now turn to the proof of Proposition~\ref{prop: fine decomp}, which follows by modifying ideas from \cite{KLO2023}.

\begin{proof}[Proof (of Proposition~\ref{prop: fine decomp})] We prove the proposition by induction on $N$.\medskip




\noindent \textbf{Base case:} Let $N = 1$ and  $A$, $B \geq 1$ and fix constants $C_1 \geq 1$ and $K_1 \in \N$, chosen sufficiently large depending on $n$, $A$ and $B$ so as to satisfy the forthcoming requirements of the proof. Suppose $\gamma \in \fkG(B; 1)$ and $a^k \in \cS_{\gamma}(A; N; 2^k)$ for some $k \in \N$ with $k \geq K_1$. From the definitions,
\begin{equation*}
    \cL(1; k) = \cL(1, 1;k) = \{L_1(1;k)\} = \{k\}.
\end{equation*}
Thus, in this case, we have a trivial decomposition given by $a^{k, k} \coloneq a^k$. It remains to check \eqref{eq: decomp K} and \eqref{eq: decomp S} hold.\smallskip

\noindent \textit{Kernel estimate.} Since $a^{k,k}\in \cS_{\gamma}(A; N; 2^k)$, by applying property (ii) and then property (i) of Definition~\ref{dfn: symb depth N}, we see that 
\begin{equation}\label{eq: base case 1}
    |\gamma'(s) \cdot x| \geq A^{-1}|x| \geq A^{-2} 2^{k-1} \qquad \textrm{for all $(s; x) \in \supp{} a^{k,k}$.}
\end{equation}
Thus, by applying integration-by-parts in $s$ once to the integral defining the kernel,
\begin{equation*}
    |\cK_{\gamma}[a^{k,k}](x)| \lesssim_{A, B} 2^{-k},
\end{equation*}
where we have used the derivative bounds from Definition~\ref{dfn: intial symb} and \eqref{eq: G class 1}. This is the desired kernel estimate~\eqref{eq: decomp K}.\smallskip

\noindent \textit{Support condition.} Let $(s; x) \in \supp{} a^{k,k}$, so that $s \in [-1,1]$ and there exists some $\nu \in \fkN(k)$ such that $|s_{\nu}^k - s| < 2^{-k} \leq 2^{-K_1}$. By property (i) of Definition~\ref{dfn: symb depth N} and \eqref{eq: G class 1}, provided $C_1 \geq 1$ is chosen sufficiently large, depending only on $A$ and $B$, we have
\begin{equation*}
   |\gamma'(s_{\nu}^k)\cdot x| \leq |\gamma'(s_{\nu}^k)||x| \leq 2A B 2^k \leq C_1 2^k.
\end{equation*}
On the other hand, to lower bound $|\gamma'(s_{\nu}^k)\cdot x|$, we employ the mean-value theorem, using \eqref{eq: base case 1} to lower bound $|\gamma'(s) \cdot x|$, using \eqref{eq: G class 1} to upper bound $|\gamma''(t) \cdot x|$, and using property (i) of Definition~\ref{dfn: symb depth N} to lower bound $|x|$. We conclude that
\begin{align*}
    |\gamma'(s_{\nu}^k)\cdot x| &\geq  |\gamma'(s)\cdot x| - \sup_{t \in [-2,2]} |\gamma''(t) \cdot x||s_{\nu}^k - s| \\
    &\geq \big(A^{-1} - 2^{-K_1}B\big)|x| \geq (2A)^{-1}(2A)^{-1}2^k \geq C_1^{-1}2^k,
\end{align*}
where the final inequalities in this bound hold provided $K_1 \in \N$ and $C_1 \geq 1$ are chosen sufficiently large. Thus, using the notation of \eqref{eq: theta nu ell}, we have $x \in \Theta_{\nu}^k \subseteq \Lambda_{\gamma}(k, k; C_1)$. This establishes the support condition \eqref{eq: decomp S}. 

\medskip




\noindent \textbf{Inductive step:} Let $2 \leq N \leq n$ and assume the result holds in the case $N-1$. Let $A$, $B \geq 1$ and fix constants $A' \geq A$ and $B' \geq B$, chosen sufficiently large depending on $n$, $A$ and $B$ so as to satisfy the forthcoming requirements of the proof. Let $C_{N-1} \geq 1$ and $K_{N-1} \in \N$ satisfy the conclusion of the proposition with $C = C_{N-1}$ and $K = K_{N-1}$ whenever $\tilde{\gamma} \in \fkG(B';N-1)$ and $\tilde{a}^k \in \cS_{\tilde{\gamma}}(A'; N-1; 2^k)$ for some $k \in \N$ with $k \geq K_{N-1}$. 

Let $K_{N-1} \in \N$ and suppose $\gamma \in \fkG(B; N)$ and $a^k \in \cS_{\gamma}(A; N; 2^k)$ for some $k \in \N$ with $k \geq K_N \geq 1$. Our goal is to show that, provided $C_N \geq 1$ and $K_N \in \N$ are sufficiently large depending only on $n$, $A$ and $B$, the conclusion of the proposition holds for $C = C_N$ and this choice of $\gamma$ and $a^k$.\medskip




\noindent \textit{Step 1: Isolating the singularity.} Fix $\eta \in C^{\infty}_c(\R)$ non-negative, radially decreasing and such that $\eta(r) = 1$ if $|r| \leq 1$ and $\eta(r) = 0$ if $|r| \geq 2$. Let $\varepsilon = \varepsilon_{A,B} > 0$ be chosen sufficiently small, depending only on $n$, $A$ and $B$, so as to satisfy the forthcoming requirements of the proof. Decompose
\begin{equation}\label{eq: decomp 1 1}
    a^k = a_{\leq N-1}^k + a_N^k \qquad \textrm{where} \qquad a_{\leq N-1}^k \coloneq a^k(1-H_N^k)
\end{equation}
for $H_N^k \colon \R \times \R^n \rightarrow \R$ given by
\begin{equation*}
H_N^k(s; x) \coloneq \prod_{j = 1}^{N-1} \eta\Big(\frac{\gamma^{(j)}(s) \cdot x}{\varepsilon 2^k}\Big).
\end{equation*} 
If $(s; x) \in \supp{}(1 - H_N^k)$, then there must exist some $1 \leq m \leq N-1$ such that $|\gamma^{(m)}(s) \cdot x| \geq \varepsilon 2^k$. Furthermore, 
\begin{equation*}
    |\partial_s^j H_N^k(s; x)| \lesssim_{A,B} 1 \qquad \textrm{ for $(s; x) \in \supp{} a^k$ and $0 \leq j \leq n$.}
\end{equation*}
Thus, provided $A' \geq 1$ is chosen sufficiently large, $a_{\leq N-1}^k \in \cS_{\gamma}(A'; N-1; 2^k)$ and, by \eqref{eq: heirarchy}, provided $B' \geq B^2$, we have $\gamma \in \fkG(B'; N-1)$. Furthermore, provided we choose $K_N \geq K_{N-1}$, we may apply the induction hypothesis to deduce that
\begin{equation}\label{eq: decomp 1 2}
    a_{\leq N-1}^k = \sum_{\bdell' \in \cL(N-1; k)} a^{k, \bdell'}
\end{equation}
for non-negative symbols $a^{k, \bdell'} \in C^{\infty}_t L^{\infty}_x(\R \times \R^n)$ satisfying
\begin{equation*}
  \|\cK_{\gamma}[a^{k, \bdell'}]\|_{L^{\infty}(\R^n)} \lesssim_{A, B} 2^{-\|\bdell'\|} \qquad \textrm{and} \qquad 
    \supp{} a^{k, \bdell'} \subseteq \Lambda_{\gamma}(k, \bdell'; C_{N-1})
\end{equation*}
for all $\bdell' \in \cL(N-1; k)$. 

For $1 \leq M \leq N-1$, Definition~\ref{dfn: indices 1} and Definition~\ref{dfn: indices 2} imply that
\begin{equation*}
    \big\{\bdell \in \cL(M, N; k) : \ell_N = 0 \big\} = \big\{ (\bdell', 0) \in \N_0^{N-M+1} : \bdell' \in \cL(M, N-1; k) \big\}.
\end{equation*}
Given $\bdell = (\bdell', 0) \in \N_0^{N- M +1}$ with $\bdell' \in \cL(N-1; k)$, we define $a^{k, \bdell} \coloneq a^{k, \bdell'}$ so that the decomposition \eqref{eq: decomp 1 2} may be rewritten as
\begin{equation}\label{eq: decomp 1 3}
    a_{\leq N-1}^k(s; x) = \sum_{M=1}^{N-1}\sum_{\substack{\bdell \in \cL(M, N; k) \\ \ell_N = 0}} a^{k, \bdell}(s; x).
\end{equation}
Given $\bdell = (\bdell', 0) \in \cL(M, N; k)$, note that $\|\bdell'\| = \|\bdell\|$ and, recalling \eqref{eq: ej exp}, that $D_j(\bdell') = D_j(\bdell)$ for $1 \leq j \leq n$. From this, we see that the above $a^{k, \bdell}$ automatically satisfy the desired kernel estimate \eqref{eq: decomp K}. On the other hand, if $\bdnu' \in \fkN(\bdell')$, provided we choose $C_N \geq C_{N-1}$, we have 
\begin{equation*}
    s_{\bdnu'}^{\bdell'} = s_{\bdnu}^{\bdell} \quad \textrm{and} \quad \Theta_{\bdnu'}^{\bdell'} = \Theta_{\bdnu}^{\bdell} \qquad \textrm{for $\bdnu = (\bdnu', 0)$ and $\bdell = (\bdell', 0)$.}
\end{equation*}
Thus, the above $a^{k, \bdell}$ also automatically satisfy the support condition \eqref{eq: decomp S}.\medskip




\noindent \textit{Step 2: Decomposing with respect to the singularity.} We now turn to the remaining part $a_N^k$ of the symbol. For $(s,x) \in \supp{} a_N^k$, by the definition \eqref{eq: decomp 1 1} and property (i) of Definition~\ref{dfn: symb depth N},
\begin{equation} \label{eq: decomp 2 1}
    |\gamma^{(j)}(s) \cdot x| \leq \varepsilon 2^{k+1} \leq 4A\varepsilon |x| \quad\text{for $1 \leq j \leq N-1$}.
\end{equation}
Recall from property (ii) of Definition~\ref{dfn: symb depth N} that 
\begin{equation*}
    \max_{1 \leq j \leq N} |\gamma^{(j)}(s) \cdot x| \geq A^{-1}|x|.
\end{equation*}
Thus, provided $\varepsilon > 0$ is chosen sufficiently small, depending only on $A$, we have
\begin{equation} \label{eq: decomp 2 2}
    A^{-1} |x| \leq |\gamma^{(N)}(s) \cdot x| \leq B |x|,
\end{equation}
where the upper bound holds by \eqref{eq: G class 1}.

Cover $[-2,2] \times \R^n$ by $O_{A,B}(1)$ sets of the form $I \times \Gamma$, where $I$ is an interval of length $(10A^2) \varepsilon$ and $\Gamma$ is a cone of aperture $\varepsilon A^{-1}$. It follows from \eqref{eq: decomp 2 2} and the derivative upper bounds of Definition \ref{dfn: G class} that if $(s;x) \in \supp{}(a^k_N) \cap (I \times \Gamma)$ for some $I$ and $\Gamma$, then $\gamma^{(N)}(s) \cdot x \neq 0$ for all $(s,x) \in 2I \times \Gamma$. By applying a partition of unity to the function $a^k$, possibly increasing the constant $A$ in the process, though only in a way depending on $A$ and $B$, we will assume without loss of generality that $a^k_N$ is supported on a single set $I \times \Gamma$.


Let $\varepsilon_0 \coloneq (4A)^2 \varepsilon$, so that, provided $\varepsilon > 0$ is chosen sufficiently small, $s \pm \varepsilon_0 \in [-2, 2]$. By Taylor's theorem, there exists some $s_{\star} \in [-2, 2]$ such that 
\begin{equation} \label{eq:e1o2ej1oie1j2elabel}
  \gamma^{(N-1)}(s \pm \varepsilon_0) \cdot x = \pm (\gamma^{(N)}(s)\cdot x)\varepsilon_0 +   \gamma^{(N-1)}(s)\cdot x + (\gamma^{(N+1)}(s_{\star})\cdot x) \frac{\varepsilon_0^2}{2}.  
\end{equation}
By \eqref{eq: decomp 2 1} and \eqref{eq: G class 1}, we have
\begin{equation*}
    \Big|\gamma^{(N-1)}(s)\cdot x + (\gamma^{(N+1)}(s_{\star})\cdot x) \frac{\varepsilon_0^2}{2}\Big| \leq \big((4A)^{-1} + 8 A^2 B \varepsilon\big) |x|\varepsilon_0 \leq (2A)^{-1}|x|\varepsilon_0,
\end{equation*}
where the last inequality holds provided $\varepsilon > 0$ is again chosen sufficiently small, depending only on $A$ and $B$. Comparing this with \eqref{eq: decomp 2 2}, we find
\[ \Big|\gamma^{(N-1)}(s)\cdot x + (\gamma^{(N+1)}(s_{\star})\cdot x) \frac{\varepsilon_0^2}{2}\Big| \leq \frac{|\gamma^{(N)}(s) \cdot x| \varepsilon_0}{2}. \]
But then we see from \eqref{eq:e1o2ej1oie1j2elabel} that the two values $\gamma^{(N-1)}(s \pm \varepsilon_0) \cdot x$ have opposite signs. Since the values $s \pm \varepsilon_0$ both lie in $2I$, by the intermediate value theorem, there exists $\theta(x) \in 2I$ with 
\begin{equation}\label{eq: decomp 2 3}
  \gamma^{(N-1)}\circ \theta (x) \cdot x = 0 \quad\text{and}\quad |\theta(x) - s| \lesssim_A \varepsilon.
\end{equation}
There are no other values $\theta \in 2I$ with $\gamma^{(N-1)}(\theta) \cdot x = 0$, because $s \mapsto \gamma^{(N-1)}(s) \cdot x$ is monotonic on $2I$. Because of this uniqueness, the implicit function theorem then guarantees that $\theta$ is a smooth function on $\supp{x} a^k_N$. Furthermore, by Taylor's theorem, \eqref{eq: G class 1} and \eqref{eq: decomp 2 1}, we have
\begin{equation}\label{eq: decomp 2 4b}
    |\gamma^{(j)}\circ \theta (x) \cdot x| = O_{A,B}(\varepsilon 2^k) \quad \textrm{for all $(s;x) \in \supp{} a_N^k$ and $1 \leq j \leq N-1$.} 
\end{equation}
%
%
%

By \eqref{eq: decomp 2 3} and \eqref{eq: decomp 2 4b}, there exists some $\varepsilon_1 > 0$ with $\varepsilon_1 = O_{A,B}(\varepsilon)$ such that 
\begin{equation}\label{eq: decomp 2 5}
    G_N^k(s; x) \coloneq \sum_{j=1}^{N-2} \Big| \frac{\gamma^{(j)}\circ \theta(x)\cdot x}{\varepsilon_1 2^k} \Big|^{2/(N-j)} + \Big|\frac{s-\theta(x)}{\varepsilon_1}\Big|^2  
\end{equation}
satisfies $|G_N^k(s; x)| \leq 1/4$ for all $(s; x) \in \supp{}a_N^k$.\footnote{The relevance of the parameter $\varepsilon_1 > 0$ will only become apparent in the proof of Lemma~\ref{lem: verify rescaling} below.} For $\eta$ as defined in Step 1, let $\beta \in C_{c}^{\infty}(\R)$ be given by $\beta(r) \coloneq \eta(r) - \eta(4r)$ for $r \in \R$. For all $L \in \N$, we have
\begin{equation*}
 \supp{} \beta  \subseteq \{ r \in \R: 1/4 \leq |r| \leq 2 \} \quad \textrm{and} \quad   \eta(2^{2L} r) + \sum_{\ell =1}^{L-1} \beta(2^{2\ell}r) = 1 \quad \textrm{for $|r| \leq 1/4$}.
\end{equation*} 
Thus, we may write 
\begin{equation*}
    a^N_k = \sum_{\ell = 1}^{\ceil{k/N}} a^N_{k,\ell}
\end{equation*}
where
\begin{equation}\label{eq: decomp 2 6}
  a_N^{k,\ell}(s; x) \coloneq a_N^k(s; x) \cdot
  \begin{cases}
    \eta\big(2^{2\ceil{k/N}} G_N^k(s; x)\big) & \text{if } \ell = \ceil{k/N}, \\
    \beta\big(2^{2\ell} G_N^k(s; x)\big) & \text{if } 1 \leq \ell < \ceil{k/N}.
  \end{cases}
\end{equation}

We further decompose the symbol. Let $\zeta \in C_c^{\infty}(\R)$ be such that $\supp{} \zeta  \subseteq [-1,1]$ and $\sum_{\nu \in \Z}  \zeta(\,\cdot\, - \nu) = 1$. For $1 \leq \ell \leq \ceil{k/N}$ and $\nu \in \Z$, define
\begin{equation}\label{eq: decomp 2 7}
 a^{k, \ell}_{N,\nu}(s; x) \coloneq a^{k, \ell}_N(s; x)\zeta(2^{\ell}s - \nu).
\end{equation}
It is not difficult to check that $a_{N, \nu}^{k, \ell} \in C^{\infty}_t L^{\infty}_x(\R \times \R^n)$ and
\begin{equation}\label{eq: decomp 2 8}
    |\partial_s^j a_{N, \nu}^{k, \ell}(s; x)| \lesssim_{A, B} 2^{\ell j} \qquad \textrm{for all $0 \leq j \leq n$.}
\end{equation}
We then have the full decomposition 
\begin{equation}\label{eq: decomp 2 9}
    a_N^k = \sum_{\ell = 1}^{\ceil{k/N}} a_N^{k, \ell} = \sum_{\ell = 1}^{\ceil{k/N}} \sum_{\nu \in \Z} a_{N, \nu}^{k, \ell}.
\end{equation}

We make some simple observations concerning the symbols $a_{N, \nu}^{k, \ell}$ appearing in~\eqref{eq: decomp 2 9}. For any fixed $1 \leq \ell \leq \ceil{k/N}$ and $x \in \R^n$, if $a_{N, \nu}^{k, \ell}(x; s) \neq 0$ for some $s \in \R$ and $\nu \in \Z$, then we must have $|s -\theta(x)| \lesssim 2^{-\ell}$ and $|s - s_{\nu}^{\ell}| \lesssim 2^{-\ell}$ for $s_{\nu}^{\ell} \coloneq 2^{-\ell}\nu$.  Since the $s_{\nu}^{\ell}$ are $2^{-\ell}$-separated, it follows that there are only $O(1)$ choices of $\nu \in \Z$ for which $s \mapsto a_{N, \nu}^{k, \ell}(x; s)$ is not identically zero.\medskip




\noindent \textit{Step 3: On the singular cone.} We consider the case $\ell = \ceil{k/N}$ in the decomposition~\eqref{eq: decomp 2 9}. Recalling Definition~\ref{dfn: indices 2}, the set $\cL(N, N;k) = \{\,\ceil{k/N}\,\}$ is a singleton, and so we may write
\begin{equation}\label{eq: decomp 3 1}
a_N^{k, \ceil{k/N}} = \sum_{\bdell \in \cL(N, N;k)} a^{k,\bdell} \quad \textrm{where} \quad    a^{k, \ceil{k/N}} \coloneq a_N^{k, \ceil{k/N}}  = \sum_{\nu \in \Z} a_{N, \nu}^{k, \ceil{k/N}}.   
\end{equation}
It remains to show that the symbol defined above satisfies the required kernel estimate and support condition. In this case we argue directly, using the highly localised support of the symbol, rather than appealing to the induction hypothesis. \smallskip

\noindent \textit{Kernel estimate.} It follows from the size of the $s$-support of the symbol that
\begin{equation*}
    \big\|\cK_{\gamma}\big[a^{k, \ceil{k/N}}\big]\big\|_{L^{\infty}(\R^n)}\lesssim_A 2^{-\ceil{k/N}},
\end{equation*}
which is the desired kernel estimate in this case.\smallskip

\noindent \textit{Support condition.} For each $\nu \in \Z$, the definition of the symbol implies that 
\begin{equation*}
    |\gamma^{(i)}\circ \theta (x) \cdot x| \lesssim 2^{i k/N} \quad \textrm{for $1 \leq i \leq N-1$} \quad \textrm{and} \quad |s_{\nu}^{\ell} - \theta(x)| \lesssim 2^{-k/N} 
\end{equation*}
for $(s; x) \in \supp{} a_{N, \nu}^{k, \ceil{k/N}}$. Thus, for $x \in \mathrm{supp}_x\, a_{N, \nu}^{k, \ceil{k/N}}$, by Taylor's theorem,
\begin{align}
\nonumber
   |\gamma^{(j)}(s_{\nu}^{\ell}) \cdot x| &\leq \sum_{i=j}^{N-1} | \gamma^{(i)}\circ \theta (x) \cdot x|\frac{|s_{\nu}^{\ell} - \theta(x)|^{i-j}}{(i-j)!} + B|x||s_{\nu}^{\ell} - \theta(x)|^{N - j} \\
   \label{eq: decomp 3 2}
   &\lesssim_{A,B} 2^{jk/N} 
\end{align}
for $1 \leq j \leq N$. 

We can use \eqref{eq: decomp 3 2} to verify the support condition \eqref{eq: decomp S}. Indeed, again recall that $\cL(N, N; k) = \{\,\ceil{k/N}\,\}$. By Definition~\ref{dfn: plate family}, we have
\begin{equation*}
    D_j\big(\ceil{k/N}\big) \coloneq \sum_{i = \max\{j+1, N\}}^N (i-j)\ell_i =
        (N-j)\ceil{k/N} \quad \textrm{if $1 \leq j \leq N-1$}
\end{equation*}
and $D_j\big(\ceil{k/N}\big) \coloneq 0$ if $N \leq j \leq n$. Furthermore,
\begin{equation*}
    \fkN\big(\ceil{k/N}\big) = \big\{ \nu \in \Z : |\nu| \leq 2^{\ceil{k/N}} \big\} \quad \textrm{and} \quad s_{\nu}^{\ceil{k/N}} =  2^{-\ceil{k/N}} \nu 
\end{equation*}
for $\nu \in \fkN\big(\ceil{k/N}\big)$. Provided $C_N \geq 1$ is chosen sufficiently large, \eqref{eq: decomp 3 2} implies that for all $x \in \supp{x} a^{k, \ceil{k/N}}$ there exists some $\nu \in \fkN(\ceil{k/N})$ such that
\begin{equation*}
    |\gamma^{(j)}(s_{\nu}^{\ceil{k/N}}) \cdot x| \leq C_N 2^{k - D_j(\ceil{k/N})} \qquad \textrm{for $1 \leq j \leq n$.}
\end{equation*}

On the other hand, if $(s; x) \in \supp{} a_{N, \nu}^{k, \ceil{k/N}}$, then 
\begin{equation*}
  |s - s_{\nu}^{\ceil{k/N}}| \leq 2^{-\ceil{k/N}} \leq 2^{-K_N/N} \leq 1 
\end{equation*}
and therefore $s$, $s_{\nu}^{\ceil{k/N}} \in [-2,2]$. Thus, \eqref{eq: decomp 2 2}, property (i) of Definition~\ref{dfn: symb depth N} and ~\eqref{eq: G class 1} ensure that
\begin{align*}
    |\gamma^{(N)}(s_{\nu}^{\ceil{k/N}}) \cdot x| &\geq |\gamma^{(N)}(s) \cdot x| - \sup_{t \in [-2,2]}|\gamma^{(N+1)}(t) \cdot x||s-s_{\nu}^{\ceil{k/N}}| \\
    &\geq \big(A^{-1} - 2^{-K_N/N} B\big)|x| \geq (2A)^{-1}(2A)^{-1}2^k \geq C_N^{-1}2^k,
\end{align*}
where the final inequalities hold provided both $K_N \in \N$ and $C_N \geq 1$ are chosen sufficiently large, depending only on $A$ and $B$. Thus, $a^{k, \ceil{k/N}}$ satisfies \eqref{eq: decomp S}.\medskip




\noindent \textit{Step 4: Rescaling and applying the induction hypothesis.} Fix $1 \leq \ell < \ceil{k/N}$. Recalling the definitions from \S\ref{subsec: rescaling}, to lighten notation for what follows, set
\begin{equation}\label{eq: decomp 4 1}
    \Dil_{N, \nu}^{\ell} \coloneq \Dil_{\gamma}(N; s_{\nu}^{\ell}, 2^{-\ell})  \qquad \textrm{and} \qquad \gamma_{N, \nu}^{\ell} \coloneq \gamma_N^{s_{\nu}^{\ell}, 2^{-\ell}}.
\end{equation}
By Lemma~\ref{lem: rescale}, provided $B'$ is chosen sufficiently large, $\gamma_{N, \nu}^{\ell} \in \fkG(B'; N-1)$. Consider the rescaled symbols
\begin{equation}\label{eq: decomp 4 2}
    b_{N, \nu}^{k, \ell}(\tilde{s}; \tilde{x}) \coloneq a_{N, \nu}^{k, \ell}\big(s_{\nu}^{\ell} + 2^{-\ell} \tilde{s}; \big(\Dil_{N, \nu}^{\ell}\big)^{-\top}\tilde{x}\big).
\end{equation}
The following claim is the crucial step in the inductive argument.

\begin{claim} For $1 \leq \ell < \ceil{k/N}$ and $\nu \in \Z$, provided $A'$ is sufficiently large, we have
\begin{equation*}
    b_{N, \nu}^{k, \ell} \in \cS_{\gamma_{N, \nu}^{\ell}}(A'; N-1; 2^{k- N\ell}).
\end{equation*}
\end{claim}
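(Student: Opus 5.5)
The plan is to verify directly the three requirements of Definition~\ref{dfn: symb depth N} (together with membership in $\cS(A')$) for $b_{N,\nu}^{k,\ell}$. Everything is transferred through one change-of-variables identity. Write $s = s_\nu^\ell + 2^{-\ell}\tilde s$ and $x = (\Dil_{N,\nu}^\ell)^{-\top}\tilde x$. Differentiating \eqref{eq: rescaled curve} gives $(\gamma_{N,\nu}^\ell)^{(j)}(\tilde s) = 2^{-\ell j}(\Dil_{N,\nu}^\ell)^{-1}\gamma^{(j)}(s)$, and hence
\begin{equation*}
    (\gamma_{N,\nu}^\ell)^{(j)}(\tilde s)\cdot \tilde x = 2^{-\ell j}\, \gamma^{(j)}(s)\cdot x \qquad \textrm{for all $j \geq 1$.}
\end{equation*}
The symbol-class bounds are then immediate. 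The cutoff $\zeta(2^\ell s-\nu)$ in \eqref{eq: decomp 2 7} forces $|\tilde s|\le 1$. The chain rule turns \eqref{eq: decomp 2 8} into $|\partial_{\tilde s}^j b_{N,\nu}^{k,\ell}|\lesssim_{A,B} 1$ for $0\le j\le n$. Non-negativity and measurability are inherited from $a_{N,\nu}^{k,\ell}$.

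For the dyadic support (property (i)), I would expand $\tilde x$ in coordinates adapted to $\Dil_{N,\nu}^\ell$. By construction $\tilde x\cdot\gamma^{(j)}(s_\nu^\ell) = 2^{-\ell j}\gamma^{(j)}(s_\nu^\ell)\cdot x$ for $j\le N$, and the component of $\tilde x$ in $V_N(s_\nu^\ell)^\perp$ is $2^{-N\ell}$ times that of $x$. Therefore
\begin{equation*}
|\tilde x|\sim_B \max\Big\{\max_{1\le j\le N}2^{-\ell j}|\gamma^{(j)}(s_\nu^\ell)\cdot x|,\ 2^{-N\ell}|x|\Big\}.
\end{equation*}
The $j=N$ term and the perpendicular term are $O(2^{k-N\ell})$ because $|x|\sim_A 2^k$.

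For $j\le N-1$, I would use the support of $\beta(2^{2\ell}G_N^k)$, which gives $G_N^k\le 2^{1-2\ell}$. From \eqref{eq: decomp 2 5} this yields three facts:
\begin{equation*}
|\gamma^{(j)}(\theta(x))\cdot x|\lesssim \varepsilon_1 2^{k-(N-j)\ell} \ \ (j\le N-2), \qquad \gamma^{(N-1)}(\theta(x))\cdot x=0, \qquad |s-\theta(x)|\lesssim \varepsilon_1 2^{-\ell}.
\end{equation*}
Since $|s_\nu^\ell - s|\le 2^{-\ell}$, Taylor expanding $\gamma^{(j)}(\cdot)\cdot x$ about $\theta(x)$ (as in \eqref{eq: decomp 3 2}) gives $|\gamma^{(j)}(s_\nu^\ell)\cdot x|\lesssim_{A,B}2^{k-(N-j)\ell}$. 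Together with the previous paragraph, this gives $|\tilde x|\lesssim 2^{k-N\ell}$. The matching lower bound $|\tilde x|\gtrsim 2^{k-N\ell}$ follows from the $j=N$ term via \eqref{eq: decomp 2 2} and the mean value theorem.

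The main obstacle is the non-degeneracy condition (ii). By the identity above, it amounts to showing
\begin{equation*}
\max_{1\le j\le N-1}2^{-\ell j}|\gamma^{(j)}(s)\cdot x|\gtrsim 2^{k-N\ell}
\end{equation*}
at the actual point $s$, not at $s_\nu^\ell$ or $\theta(x)$. Here I would use the lower bound $G_N^k\ge 2^{-2\ell}/4$ from $\supp\beta$. It gives two cases: either $|s-\theta(x)|\gtrsim\varepsilon_1 2^{-\ell}$, or $|\gamma^{(j)}(\theta(x))\cdot x|\gtrsim \varepsilon_1 2^{k-(N-j)\ell}$ for some $j\le N-2$. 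The exponents $2/(N-j)$ in $G_N^k$ are designed exactly so that each term has the natural homogeneity in $2^{-\ell}$.

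In either case, choose the relevant index $m$ to be the largest one (counting $m=N-1$ for the $|s-\theta|$ term) at which the weighted quantity is not smaller than a small constant $c$ times its natural size. Then Taylor expand $\gamma^{(m)}(\cdot)\cdot x$ from $\theta(x)$ to $s$. In the case $m=N-1$, use \eqref{eq: decomp 2 2}: the linear term is $\gamma^{(N)}\cdot x\,(s-\theta)\gtrsim \varepsilon_1 2^{k-\ell}$, while all higher contributions are bounded by $c$ or by $\varepsilon_1^2$ times the target. For $m\le N-2$, the contributions from indices $i>m$ are controlled by the maximality of $m$ and by $|s-\theta|\lesssim \varepsilon_1 2^{-\ell}$.

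To close the case analysis, I would fix $c$ and then $\varepsilon_1$ in an order that makes all error terms a small fraction of the main term, using a pigeonhole over the $O(N)$ indices. This is where the still-free parameter $\varepsilon_1$ is used. I expect this bookkeeping to be the delicate part. Taking $A'$ large enough, depending on $A$ and $B$, then completes the claim.
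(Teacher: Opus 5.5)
Your proposal is correct in outline and follows the paper's proof of Lemma~\ref{lem: verify rescaling} closely. It uses the same identity \eqref{eq: verify 8} and the same Taylor expansion about $\theta(x)$ for the upper bound at $s_\nu^\ell$. It also uses the same case split, $|s-\theta(x)|$ against $\varepsilon_1 2^{-\ell}$, for the non-degeneracy lower bound at the actual point $s$. There is one step that fails as written, and one piece of unnecessary bookkeeping.

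\textbf{The lower bound for $|\tilde x|$.} Your argument uses the $j=N$ term and the mean value theorem from $s$ to $s_\nu^\ell$, and this breaks down for small $\ell$. You only know $|s-s_\nu^\ell|\le 2^{-\ell}$. So the error $B|x|2^{-\ell}$ need not be dominated by $A^{-1}|x|$ when, say, $\ell=1$. Moreover, $s_\nu^\ell$ need not lie in $2I$, and nothing prevents $\gamma^{(N)}(s_\nu^\ell)\cdot x$ from being small or zero.

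The fix is already in your own display. The entry $2^{-N\ell}|x|$ there gives $|\tilde x|\gtrsim_B 2^{-N\ell}|x|\gtrsim_A 2^{k-N\ell}$ directly. This is exactly how the paper argues, via the operator bound \eqref{eq: scaling map 2}. Alternatively, evaluate at the actual point $s$: $|\tilde x|\gtrsim_B |(\gamma_{N,\nu}^{\ell})^{(N)}(\tilde s)\cdot\tilde x| = 2^{-N\ell}|\gamma^{(N)}(s)\cdot x|$, then apply \eqref{eq: decomp 2 2}.

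\textbf{The maximal-index device is redundant.} In the paper's Case 2, where $|s-\theta(x)|<\varepsilon_1 2^{-\ell-2}$, one may take any $m\le N-2$ with $|\gamma^{(m)}\circ\theta(x)\cdot x|\gtrsim \varepsilon_1 2^{k-(N-m)\ell}$. The upper bounds $|\gamma^{(i)}\circ\theta(x)\cdot x|\lesssim \varepsilon_1 2^{k-(N-i)\ell}$ and $|s-\theta(x)|\lesssim \varepsilon_1 2^{-\ell}$ each carry a factor $\varepsilon_1$. Hence every Taylor term with $i>m$, and also the remainder since $N-m\ge 2$, is $O_{A,B}(\varepsilon_1^{2}2^{k-(N-m)\ell})$. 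The main term is of size $\varepsilon_1 2^{k-(N-m)\ell}$, so smallness of $\varepsilon_1$ alone closes the argument. This mismatch in powers of $\varepsilon_1$ is the purpose of the normalisation in $G_N^k$.

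Your version, with a second threshold $c$ and the largest admissible index, is valid but is extra work. Note also that $\varepsilon_1$ is not entirely free. It must be at least a large $(A,B)$-dependent multiple of $\varepsilon$ so that $G_N^k\le 1/4$ on the support. So making $\varepsilon_1$ small means choosing $\varepsilon$ small, which is permissible.
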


We verify this claim in Lemma~\ref{lem: verify rescaling} below. Assuming it holds, provided we choose $K_N \geq K_{N-1}$, we may apply the induction hypothesis to write
\begin{equation}\label{eq: decomp 4 3}
    b_{N, \nu}^{k, \ell} = \sum_{\bdell' \in \cL(N-1; k - N\ell)}b_{N, \nu}^{k, \bdell', \ell}
\end{equation}
for non-negative symbols $b_{N, \nu}^{k, \bdell', \ell} \in C^{\infty}_tL^{\infty}_x(\R \times \R^n)$ satisfying
\begin{equation}\label{eq: decomp 4 4}
  \big\|\cK_{\gamma_{N, \nu}^{\ell}}[b_{N, \nu}^{k, \bdell', \ell}]\big\|_{L^{\infty}(\R^n)} \lesssim_{A, B} 2^{-\|\bdell'\|}
\end{equation}
and
\begin{equation}\label{eq: decomp 4 5}
    \supp{} b_{N, \nu}^{k, \bdell', \ell} \subseteq [-1,1] \times \Lambda_{\gamma_{N, \nu}^{\ell}}(k - N \ell; \bdell'; C_{N-1}).
\end{equation} 

For $1 \leq M \leq N-1$, note that
\begin{equation*}
  \cL(M, N; k) = \big\{ (\bdell', \ell_N) \in \N_0^{N-M} \times \N_0 : 0 \leq \ell_N < \ceil{k/N}, \, \bdell' \in \cL(M, N-1; k-N \ell_N) \big\}.  
\end{equation*}
Consequently, if we define
\begin{equation}\label{eq: decomp 4 6}
    a^{k, \bdell}(s; x) \coloneq \sum_{\nu \in \Z} b_{N, \nu}^{k, \bdell}\big(2^{\ell}(s- s_{\nu}^{\ell}); \big(\Dil_{N, \nu}^{\ell}\big)^{\top}x\big)
\end{equation}
for $\bdell \in \cL(M, N;k)$ with $1 \leq M \leq N-1$ and $\ell_N = \ell > 0$, then by combining \eqref{eq: decomp 2 9}, \eqref{eq: decomp 3 1}, \eqref{eq: decomp 4 2} and \eqref{eq: decomp 4 3}, we see that
\begin{equation}\label{eq: decomp 4 7}
    a_N^k = \sum_{M=1}^{N-1}\sum_{\substack{\bdell \in \cL(M, N; k) \\ \ell_N > 0}} a^{k,\bdell} + \sum_{\bdell \in \cL(N, N; k)} a^{k,\bdell}. 
\end{equation}
Furthermore, we claim that the above symbols satisfy the desired conditions.\smallskip

\noindent \textit{Kernel estimate.} Fix $\bdell \in \cL(M, N; k)$ with $1 \leq M \leq N-1$ and $\ell_N = \ell > 0$. To prove the kernel estimate, since $\Dil_{N, \nu}^{\ell} \colon \R^n \to \R^n$ is a bijection, it suffices to show
\begin{equation*}
    \sup_{\tilde{x} \in \R^n}\big|\cK_{\gamma}[a^{k, \bdell}]\big(\big(\Dil_{N, \nu}^{\ell}\big)^{-\top} \tilde{x}\big)\big| \lesssim_{A, B} 2^{-\|\bdell\|}.
\end{equation*}
For $\tilde{x} \in \R^n$, by applying the definition \eqref{eq: decomp 4 6} and a change of variable of integration,
\begin{equation}\label{eq: decomp 4 8}
  \cK_{\gamma}[a^{k, \bdell}](x) = 2^{-\ell} \sum_{\nu \in \Z} e^{2\pi i x \cdot  \gamma(s_{\nu}^{\ell})}  \cK_{\gamma_{N, \nu}^{\ell}}[b_{N, \nu}^{k, \bdell}](\tilde{x}) \quad \textrm{for $x = \big(\Dil_{N, \nu}^{\ell}\big)^{-\top}\tilde{x}$.}
\end{equation}
From the discussion below \eqref{eq: decomp 2 9}, for any fixed $x \in \R^n$, only $O(1)$ terms of the right-hand sum in \eqref{eq: decomp 4 8} are nonzero. Hence, writing $\bdell = (\bdell', \ell) \in \N_0^{N-M} \times \N_0$ and applying \eqref{eq: decomp 4 4}, we may bound
\begin{equation*}
  \big|\cK_{\gamma}[a^{k, \bdell}](x)\big| \lesssim 2^{-\ell} \max_{\nu \in \Z} \big| \cK_{\gamma_{N, \nu}^{\ell}}[b_{N, \nu}^{k, \bdell', \ell}](\tilde{x}) \big| \lesssim_{A,B} 2^{-\|\bdell'\| - \ell} = 2^{-\|\bdell\|}.  
\end{equation*}
 Thus, we have the kernel estimate \eqref{eq: decomp K}.\smallskip

\noindent \textit{Support condition.} Unpacking the definitions in \eqref{eq: decomp 4 5}, given $\tilde{x} \in \supp{\tilde{x}}  b_{N, \nu}^{k, \bdell}$, there exists some $\bdnu' \in \fkN(\bdell')$ such that
\begin{equation}\label{eq: decomp 4 9}
    \big|\big(\gamma_{N, \nu}^{\ell}\big)^{(j)}(s_{\bdnu'}^{\bdell'}) \cdot \tilde{x}\big| \leq C_{N-1} 2^{k - N\ell - D_j(\bdell')} \qquad \textrm{for $1 \leq j \leq n$.}
\end{equation}
and
\begin{equation}\label{eq: decomp 4 10}
    \big|\big(\gamma_{N, \nu}^{\ell}\big)^{(j)}(s_{\bdnu'}^{\bdell'}) \cdot \tilde{x}\big| \geq C_{N-1}^{-1} 2^{k - N\ell - D_j(\bdell')} \qquad \textrm{for $M \leq j \leq N-1$.}
\end{equation}

From the definitions \eqref{eq: decomp 4 1} and \eqref{eq: rescaled curve}, for $j \in \N$ we have
\begin{align}
\nonumber
    \big(\gamma_{N, \nu}^{\ell}\big)^{(j)}(s_{\bdnu'}^{\bdell'}) &= \frac{\ud^j}{\ud s^j} \Big[ \big(\Dil_{N, \nu}^{\ell}\big)^{-1} \big(\gamma(s_{\nu}^{\ell} + 2^{-\ell} s) - \gamma(s_{\nu}^{\ell})\big) \Big] \Big|_{s = s_{\bdnu'}^{\bdell'}} \\
    \label{eq: decomp 4 11}
    &= 2^{- j \ell}\big(\Dil_{N, \nu}^{\ell}\big)^{-1} \gamma^{(j)}(s_{\nu}^{\ell} + 2^{-\ell} s_{\bdnu'}^{\bdell'}).
\end{align}
On the other hand, by the definition \eqref{eq: s nu ell centres}, it follows that $s_{\nu}^{\ell} + 2^{-\ell} s_{\bdnu'}^{\bdell'} = s_{\bdnu}^{\bdell}$ for $\bdell = (\bdell', \ell) \in \cL(M, N; k)$ and $\bdnu = (\bdnu', \nu) \in \fkN(\bdell)$. Combining these observations, 
\begin{equation}\label{eq: decomp 4 12}
  \big|\big(\gamma_{N, \nu}^{\ell}\big)^{(j)}(s_{\bdnu'}^{\bdell'}) \cdot \tilde{x}\big| = 2^{-j\ell}|\gamma^{(j)}(s_{\bdnu}^{\bdell}) \cdot x| \qquad \textrm{for} \qquad x = \big(\Dil_{N, \nu}^{\ell}\big)^{-\top} \tilde{x}.
\end{equation}
Thus, if $x \in \supp{x} a^{k, \bdell}$ as defined in \eqref{eq: decomp 4 6}, then \eqref{eq: decomp 4 9}, \eqref{eq: decomp 4 10} and \eqref{eq: decomp 4 12} together imply that there exists some $\bdnu \in \fkN(\bdell)$ such that
\begin{equation}\label{eq: decomp 4 13}
|\gamma^{(j)}(s_{\bdnu}^{\bdell}) \cdot x| \leq C_{N-1} 2^{k - D_j(\bdell') - (N-j)\ell} \leq C_N 2^{k - D_j(\bdell)} \qquad \textrm{for $1 \leq j \leq N-1$}
\end{equation}
and
\begin{equation}\label{eq: decomp 4 14}
|\gamma^{(j)}(s_{\bdnu}^{\bdell}) \cdot x| \geq C_N^{-1} 2^{k - D_j(\bdell)} \qquad \textrm{for $M \leq j \leq N-1$,}
\end{equation}
where we have used the definition \eqref{eq: ej exp} and have assumed $C_N \geq C_{N-1}$. Furthermore, by \eqref{eq: G class 1} and property (i) of Definition~\ref{dfn: symb depth N}, provided $C_N \geq 1$ is chosen sufficiently large, depending only on $A$ and $B$, we have
\begin{equation}\label{eq: decomp 4 15}
    |\gamma^{(j)}(s_{\bdnu}^{\bdell}) \cdot x| \leq |\gamma^{(j)}(s_{\bdnu}^{\bdell})||x| \leq AB 2^{k+1} \leq C_N 2^k \qquad \textrm{for $N \leq j \leq n$.}
\end{equation}
Finally, it follows from \eqref{eq: decomp 2 2} that
\begin{equation}\label{eq: decomp 4 16}
   |\gamma^{(N)}(s_{\bdnu}^{\bdell}) \cdot x| \geq A^{-1}|x| \geq A^{-1} (2A)^{-1}2^k \geq C_N^{-1} 2^k, 
\end{equation}
again provided $C_N \geq 1$ is chosen sufficiently large. Combining \eqref{eq: decomp 4 13}, \eqref{eq: decomp 4 14}, \eqref{eq: decomp 4 15} and \eqref{eq: decomp 4 16}, we see that $x \in \Theta_{\bdnu}^{\bdell}$. Consequently, $ \supp{x} a^{k, \bdell} \subseteq \Lambda_{\gamma}(k, \bdell; C_N)$. This concludes the proof of the support condition \eqref{eq: decomp S}.\medskip




\noindent \textit{Step 5: Conclusion.} By combining \eqref{eq: decomp 1 1}, \eqref{eq: decomp 1 3} and \eqref{eq: decomp 4 7}, we conclude that
\begin{equation*}
    a^k = \sum_{M=1}^N\sum_{\bdell \in \cL(M, N; k)} a^{k,\bdell} = \sum_{\bdell \in \cL(N; k)} a^{k,\bdell},
\end{equation*}
where each of the symbols $a^{k,\bdell}$ satisfies the desired kernel bound \eqref{eq: decomp K} and support condition \eqref{eq: decomp S}. This closes the induction and completes the proof. 
\end{proof}




\subsection{Properties of the rescaled symbols} It remains to prove the claim from Step 4 of the proof of Proposition~\ref{prop: fine decomp}. This is a consequence of the following lemma.

    \begin{lemma}\label{lem: verify rescaling}
    Let $1 \leq \ell < \ceil{k/N}$ and $\nu \in \Z$ and consider $b_{N, \nu}^{k, \ell}$ as defined in \eqref{eq: decomp 4 2} in Step 4 of the proof of Proposition~\ref{prop: fine decomp}. Then: \vspace{2pt}
\begin{enumerate}[\normalfont(i)]
    \item $\supp{} b_{N, \nu}^{k, \ell} \subseteq [-1,1] \times \{ \tilde{x} \in \R^n : |\tilde{x}| \sim_{A,B} 2^{k-N\ell} \}$; \vspace{2pt}
    \item $\| \partial_s^j   b_{N, \nu}^{k, \ell}  \|_{L^\infty(\R \times \R^n)} \lesssim_{A,B} 1$ for $0 \leq j \leq n$; \vspace{2pt}
    \item $\displaystyle \max_{1 \leq j \leq N-1} | (\gamma_{N, \nu}^{\ell})^{(j)}(\tilde{s}) \cdot \tilde{x} | \gtrsim_{A,B} 2^{k-N\ell}$ for $(\tilde{s}; \tilde{x}) \in \supp{}  b_{N, \nu}^{k, \ell} $.
\end{enumerate}
\end{lemma}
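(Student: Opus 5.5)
The plan is to unpack the definition \eqref{eq: decomp 4 2} and transfer information about $\supp{} a_{N,\nu}^{k,\ell}$ to the rescaled variables. The basic identity is \eqref{eq: decomp 4 12} (with $\tilde s$ in place of $s_{\bdnu'}^{\bdell'}$):
\begin{equation*}
\big(\gamma_{N,\nu}^{\ell}\big)^{(j)}(\tilde s)\cdot\tilde x = 2^{-j\ell}\,\gamma^{(j)}(s)\cdot x, \qquad s = s_\nu^\ell + 2^{-\ell}\tilde s, \quad x = \big(\Dil_{N,\nu}^\ell\big)^{-\top}\tilde x .
\end{equation*}
Equivalently, $\tilde x = (\Dil_{N,\nu}^\ell)^\top x$. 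Since $(\Dil_{N,\nu}^\ell)^{\top}$ acts on the dual frame by $v_j(s_\nu^\ell)\mapsto 2^{-j\ell}v_j(s_\nu^\ell)$ for $j\le N$, with the remaining directions scaled by $2^{-N\ell}$, the coordinates of $\tilde x$ are $2^{-j\ell}\gamma^{(j)}(s_\nu^\ell)\cdot x$ for $j\le N$, and $2^{-N\ell}$ times the $V_N^\perp$-component otherwise.

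\textbf{(ii)} The $\tilde s$-support lies in $[-1,1]$ because of the cutoff $\zeta(2^\ell s-\nu)$. Differentiating in $\tilde s$ costs a factor $2^{-\ell}$ per derivative, so \eqref{eq: decomp 2 8} gives $|\partial_{\tilde s}^j b|\lesssim 2^{-\ell j}2^{\ell j}=O_{A,B}(1)$. Nonnegativity and boundedness are inherited from $a$.

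\textbf{(i)} We need $|\tilde x|\sim 2^{k-N\ell}$. By \eqref{eq: G class 4} applied to $\gamma_{N,\nu}^\ell$ (which lies in $\fkG(B';N-1)$; boundedness of its derivatives plus the identity suffices for the upper bound), we get $|\tilde x|\lesssim\max_j|(\gamma_{N,\nu}^{\ell})^{(j)}(0)\cdot\tilde x|$ up to the $V_N^\perp$ part. So it suffices to show $|\gamma^{(j)}(s_\nu^\ell)\cdot x|\lesssim 2^{k-(N-j)\ell}$ for $j\le N$, together with the lower bound at $j=N$. The latter is $|\gamma^{(N)}\cdot x|\sim 2^k$ from \eqref{eq: decomp 2 2}, after moving from $s$ to $s_\nu^\ell$ by the mean value theorem.

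For the upper bounds I would use the support of $\beta(2^{2\ell}G_N^k)$ (or $\eta$), which gives $|\gamma^{(j)}\circ\theta(x)\cdot x|\lesssim \varepsilon_1 2^{k}2^{-(N-j)\ell}$ for $j\le N-2$, and $|s-\theta(x)|\lesssim\varepsilon_1 2^{-\ell}$. We also have $\gamma^{(N-1)}\circ\theta(x)\cdot x=0$. A Taylor expansion about $\theta(x)$, exactly as in \eqref{eq: decomp 3 2}, then yields the bounds at $s_\nu^\ell$ (and at any $s$ in the support). The $V_N^\perp$ component of $x$ is $O(2^k)$, so after scaling it is $O(2^{k-N\ell})$. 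This gives both directions of (i), the lower bound coming from the $j=N$ coordinate.

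\textbf{(iii)} This is the main obstacle: we need nondegeneracy in the first $N-1$ rescaled directions, i.e.
\begin{equation*}
\max_{j\le N-1}2^{-j\ell}|\gamma^{(j)}(s)\cdot x|\gtrsim 2^{k-N\ell}.
\end{equation*}
Since $\ell<\ceil{k/N}$, we are on the $\beta$-piece, so $G_N^k\sim 2^{-2\ell}$. Hence either some $j\le N-2$ has $|\gamma^{(j)}\circ\theta(x)\cdot x|\sim\varepsilon_1 2^{k-(N-j)\ell}$, or $|s-\theta(x)|\sim\varepsilon_1 2^{-\ell}$.

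In the second case, use $\gamma^{(N-1)}\circ\theta\cdot x=0$ and $|\gamma^{(N)}\cdot x|\sim 2^k$. By Taylor expansion, $|\gamma^{(N-1)}(s)\cdot x|\gtrsim\varepsilon_1 2^{k-\ell}$, with an error of size $\varepsilon_1^2 2^{k-2\ell}$; this is the bound we need for $j=N-1$.

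In the first case, take the largest such $j$ with the $\gtrsim$ scale relative to the others, and Taylor expand from $\theta(x)$ to $s$. The terms with smaller index are dominated, but there is an unwanted contribution of order $|\gamma^{(N)}\cdot x|\,|s-\theta|^{N-j}\lesssim\varepsilon_1^{N-j}2^{k-(N-j)\ell}$. The main term is only of order $\varepsilon_1 2^{k-(N-j)\ell}$, so this error is smaller by a factor $\varepsilon_1^{N-j-1}$. This is presumably why the powers $2/(N-j)$ and the small parameter $\varepsilon_1$ were built into \eqref{eq: decomp 2 5}. A pigeonholing on which term of $G_N^k$ is largest, choosing among near-maximal terms the one with the most favourable hierarchy (a standard maximal-index argument), should make the remaining cross terms absorbable. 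I expect this case analysis to need the most care.
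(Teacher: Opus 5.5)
Your plan follows the paper's proof: (ii) by the chain rule, (i) via $\gamma^{(j)}(s_\nu^\ell)\cdot\tilde x = 2^{-j\ell}\gamma^{(j)}(s_\nu^\ell)\cdot x$ and a Taylor expansion about $\theta(x)$, and (iii) via the same two-case split of $G_N^k\sim 2^{-2\ell}$. Two points need fixing.

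\textbf{The lower bound in (i).} You transfer $|\gamma^{(N)}\cdot x|\gtrsim 2^k$ from $s$ to $s_\nu^\ell$ by the mean value theorem. This fails for small $\ell$: the cutoff only gives $|s-s_\nu^\ell|\le 2^{-\ell}$, so the loss $B|x|2^{-\ell}$ beats the main term $A^{-1}|x|$ unless $2^{\ell}\gg AB$. The paper instead uses the smallest singular value of the dilation. From \eqref{eq: scaling map 2}, $|\tilde x|=|(\Dil_{N,\nu}^{\ell})^{\top}x|\gtrsim_B 2^{-N\ell}|x|$, which holds for every $\ell$. Alternatively, evaluate at $\tilde s$ rather than at the centre. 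There $(\gamma_{N,\nu}^{\ell})^{(N)}(\tilde s)\cdot\tilde x=2^{-N\ell}\gamma^{(N)}(s)\cdot x$, and $|(\gamma_{N,\nu}^{\ell})^{(N)}|=O_{A,B}(1)$.

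\textbf{The case of (iii) you flag as the main obstacle.} No maximal-index pigeonholing is needed. Any $1\le m\le N-2$ with $|\gamma^{(m)}\circ\theta(x)\cdot x|\gtrsim\varepsilon_1 2^{k-(N-m)\ell}$ works.
\begin{itemize}
\item The Taylor expansion of $\gamma^{(m)}(s)\cdot x$ about $\theta(x)$ contains only indices $i\ge m$, so there are no ``smaller index'' terms.
\item You already derived the upper bounds $|\gamma^{(i)}\circ\theta(x)\cdot x|\lesssim\varepsilon_1 2^{k-(N-i)\ell}$ and $|s-\theta(x)|\lesssim\varepsilon_1 2^{-\ell}$ from $G_N^k\le 2^{-2\ell+1}$.
\item These make every term with $m<i\le N-2$ of size $O(\varepsilon_1^{1+i-m}2^{k-(N-m)\ell})$. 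The $i=N-1$ term vanishes, and the remainder is $O(\varepsilon_1^{N-m}2^{k-(N-m)\ell})$.
\item All of these are absorbed by the main term, since $N-m\ge 2$ and $\varepsilon_1=O_{A,B}(\varepsilon)$ is small.
\end{itemize}
The $\varepsilon_1$ placed in front of the $s$-term of $G_N^k$ makes each power of $s-\theta(x)$ cost a factor $\varepsilon_1$. That is what closes this case.
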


\begin{proof} Fix $1 \leq \ell < \ceil{k/N}$ and $\nu \in \Z$ and let $(\tilde{s};\tilde{x}) \in \supp{} b_{N, \nu}^{k, \ell}$, so that 
\begin{equation}\label{eq: verify 1}
 (s; x) \coloneq \big(s_{\nu}^{\ell} + 2^{-\ell}s; \big(\Dil_{N, \nu}^{\ell}\big)^{-\top}\tilde{x} \big)   \quad \textrm{satisfies} \quad (s; x) \in \supp{} a_{N, \nu}^{k, \ell}.
\end{equation}
By definition of the symbol from \eqref{eq: decomp 2 5} and \eqref{eq: decomp 2 6}, we have 
\begin{equation}\label{eq: verify 2a}
  \sum_{j=1}^{N-1} 2^{(N-j) \ell} | \gamma^{(j)}(s_{\nu}^{\ell}) \cdot x| \lesssim_{A, B} 2^k \quad \textrm{and} \quad   \sum_{j=1}^{N-1} 2^{(N-j) \ell} | \gamma^{(j)}(s) \cdot x| \gtrsim_{A, B} 2^k.
\end{equation}
Indeed, \eqref{eq: verify 2a} is essentially the content of \cite[Lemma 11]{GovindanSheri2024} (see also \cite{KLO2023}). For completeness, presently, we nevertheless detail the proof of \eqref{eq: verify 2a} via a Taylor expansion.

Since $(s;x) \in \supp{} a_N^{k, \ell}$, we have $2^{-2 \ell - 2} \leq |G_N^k(s; x)| \leq 2^{-2 \ell + 1}$. In particular,  
\begin{equation} \label{eq: verify 3}
    |\gamma^{(j)} \circ \theta(x) \cdot x| \lesssim \varepsilon_1 2^{k-(N-j)\ell}, \quad 1 \leq j \leq N-1, \quad \textrm{and} \quad |s - \theta(x)| \leq \varepsilon_1 2^{-\ell + 1}.
\end{equation}
Furthermore, $|s_{\nu}^{\ell} - \theta(x)| \lesssim 2^{-\ell}$. Taylor's theorem, using \eqref{eq: verify 3}, \eqref{eq: decomp 2 3} and \eqref{eq: G class 1}, implies that
\begin{equation*}
    |\gamma^{(j)}(s_{\nu}^{\ell}) \cdot x| \lesssim_B \sum_{i = j}^{N-1} 2^{k-(N-i)\ell} 2^{-(i -j)\ell} + 2^{k - (N-j)\ell} \lesssim 2^{k-(N-j)\ell}, \quad 1 \leq j \leq N-1,
\end{equation*}
establishing the upper bound in \eqref{eq: verify 2a}.

Turning to the lower bound, we consider the following two cases.\smallskip

\noindent \textit{Case 1: $|s - \theta(x)| \geq \varepsilon_1 2^{-\ell - 2} $}. By Taylor's theorem and \eqref{eq: decomp 2 3}, we have,
\begin{align*}
    |\gamma^{(N-1)}(s) \cdot x| &= \big|\big(\gamma^{(N-1)}(s) - \gamma^{(N-1)}(\theta(x)) \big) \cdot x\big|\\
    &\geq \big(A^{-1} - 2BA\varepsilon_1\big)2^k|s - \theta(x)|,
\end{align*}
where we have used \eqref{eq: decomp 2 2}, \eqref{eq: G class 1} and \eqref{eq: verify 3}. Since $\varepsilon_1 = O_{A,B}(\varepsilon)$, provided $\varepsilon > 0$ is chosen sufficiently small, depending only on $A$ and $B$, we have 
\begin{equation*}
   |\gamma^{(N-1)}(s) \cdot x| \gtrsim_{A, B} \varepsilon_1 2^{k-\ell},
\end{equation*}
using the hypothesis of the case.\smallskip

\noindent \textit{Case 2: $|s - \theta(x)| < \varepsilon_1 2^{-\ell-2}$.} The lower bound $|G_N^k(s,x)| \geq 2^{-2\ell-2}$ implies that
\begin{equation}\label{eq: verify 4}
    |\gamma^{(m)} \circ \theta(x) \cdot x| \sim \varepsilon_1 2^{-\ell(N-m)} \qquad \textrm{for some $1 \leq m \leq N-2$. }
\end{equation}
 By Taylor's theorem, again using \eqref{eq: verify 3} and \eqref{eq: G class 1}, we have
\begin{equation}\label{eq: verify 5}
    |\gamma^{(m)}(s) \cdot x| \geq |\gamma^{(m)}\circ \theta(x) \cdot x| - O_B\big(\varepsilon_1^2 2^{k - (N - m) \ell} \big)
\end{equation}
Here, when bounding the remainder, we used that $N-m \geq 2$, so that $\varepsilon_1^{N-m} \leq \varepsilon_1^2$. Thus, provided $\varepsilon > 0$ is chosen sufficiently small depending only on $n$, $A$ and $B$, the above bounds \eqref{eq: verify 4} and \eqref{eq: verify 5} combine to give
\begin{equation*}
    |\gamma^{(m)}(s) \cdot x|  \gtrsim \varepsilon_1 2^{k-(N-m)\ell}.
\end{equation*} 

In either case, we see that the lower bound in \eqref{eq: verify 2a} holds.\medskip

\noindent i) Comparing the definitions \eqref{eq: decomp 4 2} and \eqref{eq: decomp 2 7}, we see $\supp{\tilde{s}} b_{N, \nu}^{k, \ell} \subseteq \supp{} \zeta \subseteq [-1,1]$.

There exists some unit vector $v \in V_N(s_{\nu}^{\ell})^{\perp}$ such that
\begin{equation}\label{eq: verify 6}
    |\tilde{x}| \lesssim \sum_{j=1}^N  | \gamma^{(j)}(s_{\nu}^{\ell}) \cdot \tilde{x}| + |v \cdot \tilde{x}|.
\end{equation}
By unpacking the definition of $\Dil_{N, \nu}^{\ell}$ from \eqref{eq: decomp 4 1} and \eqref{eq: scaling map 1}, for $1 \leq j \leq N$ we see that
\begin{equation}\label{eq: verify 7}
\gamma^{(j)}(s_{\nu}^{\ell}) \cdot \tilde{x} = 2^{-j \ell} \gamma^{(j)}(s_{\nu}^{\ell}) \cdot x  \quad \textrm{and} \quad  v \cdot \tilde{x} = 2^{-N\ell} v \cdot x. 
\end{equation}
Combining \eqref{eq: verify 6}, \eqref{eq: verify 7} and \eqref{eq: verify 2a}, and recalling that $|x| \sim_A 2^k$ by property i) of Definition~\ref{dfn: symb depth N}, we obtain
\begin{equation*}
    2^{N\ell}|\tilde{x}| \lesssim 2^{N\ell}\sum_{j=1}^{N-1}  | \gamma^{(j)}(s_{\nu}^{\ell}) \cdot \tilde{x}| + 2^{N\ell}| \gamma^{(j)}(s_{\nu}^{\ell}) \cdot \tilde{x}| + 2^{N\ell}|v \cdot \tilde{x}| \lesssim_A |x|.
\end{equation*}
However, recalling the operator norm bound \eqref{eq: scaling map 2}, the reverse form of the above inequality also holds. In particular,
\begin{equation*}
2^{k-N\ell} \lesssim_A  2^{-N\ell}|x| \lesssim |\tilde{x}| \lesssim  2^{-N \ell} |x| \lesssim_A 2^{k- N \ell}.
\end{equation*}
This is the desired estimate. \medskip

\noindent ii) The derivative bounds are an immediate consequence of \eqref{eq: decomp 2 8} and the definition of the scaling in \eqref{eq: decomp 4 2}.\medskip

\noindent iii) Once again, let $(\tilde{s};\tilde{x}) \in \supp{} b_{N, \nu}^{k, \ell}$ and defined $(s; x) \in \supp{} a_{N, \nu}^{k, \ell}$ as in \eqref{eq: verify 1}. Arguing as in \eqref{eq: decomp 4 11}, for $j \in \N$ we have
\begin{equation}\label{eq: verify 8}
    (\gamma_{N, \nu}^{\ell})^{(j)}(\tilde{s}) \cdot \tilde{x} = 2^{-j\ell}\big(\Dil_{N, \nu}^{\ell}\big)^{-1} \gamma^{(j)}(s) \cdot \tilde{x} = 2^{-j\ell} \gamma^{(j)}(s) \cdot x.
\end{equation}
If we substitute \eqref{eq: verify 8} into \eqref{eq: verify 2a}, then we obtain
\begin{equation*}
2^{N\ell} \sum_{j = 1}^{N-1} | (\gamma_{N, \nu}^{\ell})^{(j)}(\tilde{s}) \cdot \tilde{x}| \gtrsim_{A, B} 2^k,
\end{equation*}
as required.    
\end{proof}




\section{Proof of \texorpdfstring{$L^p$-improving}{Lp improving} III: A high-low argument}\label{sec: Lp improv III}




\subsection{Geometric convolution operators} 

In \S\ref{sec: Lp improv IV} below, we shall use the decomposition described in \S\ref{sec: Lp improv II} to reduce Theorem~\ref{thm: symbolconvolutionbound} to $L^p$-improving estimates for certain geometric averaging operators, which take averages over the sets $\Lambda_{\gamma}(\bdell; C)$ introduced in Definition~\ref{dfn: plate family}. In this section, we introduce the geometric averaging operators and establish their $L^p$-improving properties. 

Given $\bdell \in \N_0^{N-M-1}$ for $1 \leq M \leq N \leq n$, recall the definition of the exponents $D_j(\bdell)$ from Definition~\ref{dfn: plate family}. Let
\begin{equation}\label{eq: D and E}
    D(\bdell) \coloneq \sum_{j=1}^n D_j(\bdell) = \sum_{j=M}^N \frac{j(j-1)}{2} \ell_j \quad \textrm{and} \quad E(\bdell) \coloneq \sum_{j = \max\{3, M\}}^N \frac{j(j-1)}{2} \ell_j,
\end{equation}
so that $D(\bdell) \geq E(\bdell)$ and $D(\bdell) = E(\bdell)$ when $M \geq 3$. In this section, we establish the following $L^p$-improving estimate.

\begin{proposition}\label{prop: Lambda conv}  Let $\gamma \colon [-2,2] \to \R^n$ be a smooth, non-degenerate curve and $\bdell \in \N_0^{N-M+1}$ for $1 \leq M \leq N \leq n$ and $C \geq 1$. For all $1 \leq p < p_n$, we have
    \begin{equation*}
        \|\bdone_{\Lambda_{\gamma}(\bdell; C)} * f \|_{L^{p'}(\R^n)} \lesssim_{\gamma, C} 2^{-2E(\bdell)/p'} \| f \|_{L^p(\R^n)}.
    \end{equation*} 
\end{proposition}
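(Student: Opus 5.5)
The plan is to first reduce to the case $M \geq 3$, and then prove an $L^2$ estimate together with a trivial $L^1 \to L^\infty$ estimate, organised as a high--low induction on $\|\bdell\|$. The slack $p < p_n$ is used to absorb constant-factor losses at each induction step.

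\emph{Reduction to $M \geq 3$.} The first step handles $M \leq 2$. Here $E(\bdell)$ ignores $\ell_1$ and $\ell_2$, while $D(\bdell)$ does not. Set $\bdell^\sharp := (\ell_3,\dots,\ell_N)$, with the convention $\ell_j = 0$ for $j < M$.
\begin{itemize}
\item Discard the lower bounds in $\Theta_{M,N}$, so each plate sits inside the full parallelepiped $\Theta(s,\bdrho(\bdell),C)$.
\item Observe that $\rho_j(\bdell) \leq \rho_j(\bdell^\sharp)$ for all $j$.
\item By a Taylor expansion, as in \eqref{eq: decomp 3 2}, each centre $s^{\bdell}_{\bdnu}$ lies within $2^{-(\ell_3+\cdots+\ell_N)}$ of some coarser centre $s^{\bdell^\sharp}_{\bdnu^\sharp}$.
\end{itemize}
Together these give $\Lambda_\gamma(\bdell;C) \subseteq \Lambda'_\gamma(\bdell^\sharp; O_\gamma(C))$, where $\Lambda'$ denotes the union of the full plates $\Theta$ without lower bounds. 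Since $E(\bdell) = E(\bdell^\sharp) = D(\bdell^\sharp)$ and the kernels are non-negative, the claim reduces to bounding $\bdone_{\Lambda'_\gamma(\bdell^\sharp;C')}$. The lower bounds are then reinstated by splitting each full plate dyadically in the coordinates $M \leq j \leq N$. Pieces where some $|\gamma^{(j)}(s)\cdot x| \ll \rho_j$ can be re-read as plates with a larger parameter vector, which yields a geometric sum. So we may assume $3 \leq M$, in which case $D = E$.

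\emph{Estimate for $M \geq 3$.} By Lemma~\ref{lem: force sep} with $\delta$ a small constant, split $\fkN(\bdell)$ into $O(1)$ subfamilies in which hypothesis (iii) of Lemma~\ref{lem: plate sep} holds; we also split into $O(1)$ subfamilies with centres in intervals of length $\le \varepsilon$. We then decompose
\[
\bdone_{\Lambda} = \bdone_\Lambda * \phi_R + \big(\bdone_\Lambda - \bdone_\Lambda * \phi_R\big),
\]
where $\phi_R$ is a Schwartz bump at spatial scale $R^{-1}$, with $R$ chosen comparable to $\lambda\rho_2^{-1}$ for a large constant $\lambda$.
\begin{itemize}
\item \emph{High part, $L^2 \to L^2$.} By Plancherel it suffices to bound its Fourier transform pointwise. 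Each plate has $|\widehat{\bdone_{\Theta}}(\xi)| \lesssim 2^{-D}$, with rapid decay off $\lambda\Theta^\circ(s,\bdrho)$. By Lemma~\ref{lem: dual overlap}, for $|\xi| \geq R$ only the centres in an interval of length $O(\lambda\rho_2^{-1}|\xi|^{-1})$ contribute. Counting the centres (spacing $\ge 2^{-\|\bdell\|}$) in such an interval gives an $L^2$ bound of roughly $2^{-D}\cdot(1 + 2^{\|\bdell\|}\rho_2^{-1}R^{-1})$. We choose $R$ so that this is $\lesssim 2^{-D}$ up to the overlap factor.
\item \emph{High part, $L^1 \to L^\infty$.} Here we have the trivial bound $O(1)$.
\item \emph{Interpolation.} Interpolating these two bounds gives $2^{-2D/p'}$ on the line of duality.
\end{itemize}

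\emph{Low part and induction.} The function $\bdone_\Lambda * \phi_R$ is essentially the average of $\bdone_\Lambda$ over the $R^{-1}$-neighbourhood, that is, over the plates fattened in their thinnest directions. Lemma~\ref{lem: plate sep} says the plates in each subfamily are separated by $\gtrsim \rho_M|s_1-s_2|$. Hence in the fattened set, neighbouring plates merge into plates of a coarser family $\bdell'$, obtained by lowering one of the $\ell_j$, while the density factor is $|\Lambda|/|\Lambda_{\bdell'}|$. We thus get the pointwise bound
\[
\bdone_\Lambda * \phi_R \lesssim 2^{-(D(\bdell)-D(\bdell'))}\,\bdone_{\Lambda_\gamma(\bdell';C')},
\]
plus Schwartz tails. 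The inductive hypothesis on $\|\bdell'\| < \|\bdell\|$ then gives a contribution
\[
2^{-(D-D')}\, 2^{-2D'/p'} \leq 2^{-2D/p'} \quad \text{whenever } p' \geq 2.
\]
The constants $C'$ and the induction constants are controlled because we gain a strict power of $2$ for $p<p_n$; this is where the slack is used.

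\emph{Main obstacle.} The hardest step is the exponent bookkeeping in the high part: checking that the dual-plate overlap count from Lemma~\ref{lem: dual overlap}, combined with the number of plates $2^{\|\bdell\|}$, really yields an $L^2$ bound of size $2^{-D}$. For $N=n$ this should require exactly that $\tfrac{j(j-1)}{2}$-weighted sum $D$ dominate $\|\bdell\|$ in the right proportion, which is equivalent to $p' \geq p_n' = n(n+1)/2$. If the single-scale choice of $R$ does not suffice, I would iterate the high--low split across the chain of scales $\rho_M^{-1},\dots,\rho_N^{-1}$, one coordinate at a time. This mirrors the recursive structure of $\cL(M,N;k)$.
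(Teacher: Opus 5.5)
Your overall architecture (reduce to $M\ge 3$, then a high--low argument with induction on $\|\bdell\|$ using Lemmas~\ref{lem: plate sep} and~\ref{lem: dual overlap}) matches the paper's. However, the step that closes the induction is wrong.

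The pointwise bound $\bdone_{\Lambda}*\phi_R\lesssim 2^{-(D(\bdell)-D(\bdell'))}\bdone_{\Lambda_\gamma(\bdell';C')}$ fails. Convolving with $\phi_R$ measures the density of $\Lambda_\gamma(\bdell;C)$ in balls of radius $R^{-1}$, not a global ratio of volumes. Take $2^{D_2(\bdell)}\le R\le 2^{D_1(\bdell)}$, which gives $R\ge 2^{D_{M-1}(\bdell)}$ since $M\ge 3$. Lemma~\ref{lem: plate sep} then shows a ball of radius $R^{-1}$ meets only $O(1)$ plates, so nothing merges. Each plate is thinner than $R^{-1}$ only in the coordinate $\gamma^{(1)}(s)\cdot x$. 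Hence $\sup \chi^{\bdell}_{(C)}*\phi_R\approx R\,2^{-D_1(\bdell)}$ (compare \eqref{eq: low ker 6}). For $R\approx 2^{D_1(\bdell')}$ this is $2^{-(D_1(\bdell)-D_1(\bdell'))}$, far larger than your factor: lowering $\ell_M$ by $m$ lowers $D_1$ by $(M-1)m$ but $D$ by $\tfrac{M(M-1)}{2}m$.

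So what must be verified is not $2^{-(D-D')}2^{-2D'/p'}\le 2^{-2D/p'}$ for $p'\ge 2$, but
\[
D_1(\bdell)-D_1(\bdell')-\tfrac{2}{p'}\big(D(\bdell)-D(\bdell')\big)\ \geq\ \delta(p)\big(D_1(\bdell)-D_1(\bdell')\big).
\]
This is Lemma~\ref{lem: D exp}. It rests on $(j-1)\ge\tfrac{2}{p_n'}\cdot\tfrac{j(j-1)}{2}$ for $j\le n$, and the strict margin $\delta(p)>0$ for $p<p_n$ absorbs the constant losses from passing to $C'$. Your ``main obstacle'' paragraph therefore puts the exponent bookkeeping in the wrong place: the $L^2$ bound for the high part involves no condition on $p$.

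The splitting scale is also inconsistent. For the high part $h$, the dual-plate count gives only $\|\widehat{h}\|_\infty\lesssim 2^{-D(\bdell)}(1+2^{D_1(\bdell)}R^{-1})$, since $2^{\|\bdell\|}\rho_2(\bdell)^{-1}=2^{D_1(\bdell)}$. At $R\sim\lambda\rho_2^{-1}$ you therefore carry the unbounded factor $2^{\|\bdell\|}/\lambda$. This bound is acceptable only for $R\ge 2^{D_1(\bdell)-K}$, and Young's inequality only for $R\le 2^{D_2(\bdell)+K}$.

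The paper covers all scales with the Calder\'on decomposition \eqref{eq: LP 1}. Propositions~\ref{prop: low} and~\ref{prop: high} treat the two extremes. Each intermediate piece obeys $|P_R\chi^{\bdell}_{(C)}|\lesssim R2^{-D_1(\bdell)}\bdone_{\Lambda_\gamma(\bdell';C')}$, where $\bdell'$ is the lexicographically maximal coarsening with $2^{D_1(\bdell')}\le R$. The induction hypothesis and Lemma~\ref{lem: D exp} then give a gain $2^{-\delta(p)K}$, and the pieces are reassembled by the Littlewood--Paley inequality (using $1<p\le 2\le p'$). A single split at $R=2^{D_1(\bdell)-K}$, with the correct density $2^{-K}$, would also close by the same lemma. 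It does not close at your scale, nor with your density.

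Two smaller points. First, the Fourier step must be run on the sum of smooth bumps $\chi^{\bdell}_{(C)}\ge\bdone_\Lambda$, with positivity used at the end. The indicator of a union is not additive over plates, and sharp indicators lack the decay you invoke.

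Second, for $M\le 2$ you should not discard all lower bounds and then ``reinstate'' them. A piece on which only one $|\gamma^{(j)}(s)\cdot x|$ is small is not a plate $\Theta(s,\bdrho(\tilde{\bdell}),\cdot)$ for any $\tilde{\bdell}$; for instance, $\rho_N\equiv 1$. Since $\rho_j(\bdell)=\rho_j(\bdell^\sharp)$ for $j\ge 3$, those lower bounds transfer directly once the Taylor error is at most $(4C)^{-1}\rho_j$. The paper secures this by refining the mesh of centres by a factor $c\sim\kappa_B C^{-2}$ (Definition~\ref{dfn: mod plates}, Lemma~\ref{lem: Lambda contain}), which only enlarges the sets.
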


For minor technical reasons,\footnote{Mainly so that Lemma \ref{lem: Lambda contain} below holds.} in order to prove Proposition~\ref{prop: Lambda conv}, we slightly modify the definition of the sets $\Lambda_{\gamma}(\bdell; C)$. For $B \geq 1$, let $\kappa_B > 0$ be a constant, chosen sufficiently small depending only on $n$ and $B$ so as to satisfy the forthcoming requirements of our arguments.

\begin{definition}\label{dfn: mod plates} Let $\gamma \colon [-2,2] \to \R^n$ be a smooth, non-degenerate curve and $B \geq 1$ be the minimal value such that $\gamma \in \fkG(B)$. Given $C \geq 1$, let $c > 0$ be dyadic and such that $ c \leq \kappa_B C^{-2} < 2c$. Letting $\bdell \in \N_0^{N-M+1}$ for $1 \leq M \leq N \leq n$, for $\bdnu = (\nu_M, \dots, \nu_N) \in \Z^{N-M+1}$ we redefine 
    \[ s_{\bdnu}^{\bdell} \coloneq  c\sum_{j = M}^N 2^{-\sum_{i = j}^N \ell_i} \nu_j \]
    and
    \begin{equation*}
    \fkN(\bdell) \coloneq  \left\{ (\nu_M, \dots, \nu_N) \in \Z^{N-M+1} : |\nu_j| \leq c^{-1}2^{\ell_j}, \,M \leq j \leq N;\; \left|s_{\bdnu}^{\bdell}\right| \leq 2\right\}.
\end{equation*}
Given $\bdnu = (\nu_M, \dots, \nu_N) \in \fkN(\bdell)$, we let $\Theta_{\bdnu}^{\bdell} \coloneq \Theta_{M,N}(s_{\bdnu}^{\bdell}, \bdrho(\bdell), C)$  be as in Definition~\ref{dfn: plate family}, but now defined with respect to $s_{\bdnu}^{\bdell}$ as above. Similarly, for $k \in \N$ we let $\Lambda_{\gamma}(\bdell; C)$ and $\Lambda_{\gamma}(k, \bdell; C)$ be as in Definition~\ref{dfn: plate family}, but now defined with respect to $\fkN(\bdell)$ and $\Theta_{\bdnu}^{\bdell}$ as above. 
\end{definition}

Note that, although $\fkN(\bdell)$ and $s_{\bdnu}^{\bdell}$ now depend on $\gamma$ and the technical parameter $C$, we suppress this in the notation. 

The redefined sets are supersets of the original definitions: we are simply including plates from a finer mesh of centres. In particular, if we establish Proposition~\ref{prop: Lambda conv} under the new definitions, then it remains valid under the original definitions. For the remainder of this section, we work under the above new definition.

Proposition~\ref{prop: Lambda conv} is a consequence of a smooth variant, which we describe presently.

\begin{definition}\label{dfn: geometric kernel} Let $\gamma \colon [-2,2] \to \R^n$ be a smooth, non-degenerate curve, $\bdell \in \N_0^{N-M+1}$ for $1 \leq M \leq N$, and $C \geq 1$. 

\begin{enumerate}[\normalfont(a)]
    \item Given $\bdnu \in \fkN(\bdell)$, we let $\chi_{\bdnu}^{\bdell} \in C^{\infty}_c(\R^n)$ be a non-negative smooth function which is adapted to $\Theta_{\bdnu}^{\bdell} \coloneq \Theta_{M,N}(s_{\bdnu}^{\bdell}, \bdrho(\bdell), C)$ in the following sense:
\begin{enumerate}[\normalfont(i)]
    \item $ \bdone_{\Theta_{\bdnu}^{\bdell}}(x) \leq \chi_{\bdnu}^{\bdell}(x) \leq  \bdone_{\overtheta_{\bdnu}^{\bdell}}(x)$ for all $x \in \R^n$, where $\overtheta_{\bdnu}^{\bdell} \coloneq \Theta_{M,N}(s_{\bdnu}^{\bdell}, \bdrho(\bdell), 2C)$;
    \item For all $1 \leq j \leq n$ and $x \in \R^n$, we have 
    \[  | ( v_j(s_{\bdnu}^{\bdell}) \cdot \nabla )^d \chi_{\bdnu}^{\bdell}(x) | \lesssim_{\gamma} C^d \rho_j(\bdell)^{-d} \quad \text{for all $0 \leq d \leq 2$}. \]
\end{enumerate}
\item Furthermore, we define 
\begin{equation*}
   \chi_{(C)}^{\bdell} \coloneq \sum_{\bdnu \in \fkN(\bdell)} \chi_{\bdnu}^{\bdell} \quad \textrm{so that} \quad \bdone_{\Lambda_{\gamma}(\bdell; C)} \leq \chi_{(C)}^{\bdell} \quad \textrm{and} \quad \supp{} \chi_{(C)}^{\bdell} \subseteq \Lambda_{\gamma}(\bdell; 2C).
\end{equation*}
\end{enumerate} 
\end{definition}

We make two observations:
\begin{enumerate}
    \item The definition of the functions $\chi_{\bdnu}^{\bdell}$ depends on the technical parameter $C$, but we suppress this in the notation.

    \item Since (i) implies the support of $\chi_{\bdnu}^{\bdell}$ has volume at most $|\overtheta_{\bdnu}^{\bdell}| \lesssim C^{O(1)} 2^{-D(\bdell)}$, integration by parts and the bounds (ii) yields that
    \begin{equation} \label{eq: high 2}
    \left|\left(\chi_{\bdnu}^{\bdell}\right)\widehat{\vphantom{A}}\,\,(\xi)\right| \lesssim_{\gamma} C^{O(1)} 2^{-D(\bdell)} \left(1 + \sum_{j=1}^n \rho_j(\bdell)| v_j(s_{\bdnu}^{\bdell}) \cdot \xi|\right)^{-2}.
    \end{equation}
    Geometrically, this bound tells us that the Fourier transform of $\chi_{\bdnu}^{\bdell}$ decays rapidly away from the dual plate $(\Theta_{\bdnu}^{\bdell})^\circ$.
\end{enumerate}

The smooth variant of Proposition~\ref{prop: Lambda conv} reads thus.

\begin{restatable}{proposition}{propchiconv} \label{prop: chi conv}
    Let $\gamma \colon [-2,2] \to \R^n$ be a smooth, non-degenerate curve and $\bdell \in \N_0^{N-M+1}$ for $3 \leq M \leq N \leq n$ and $C \geq 1$. For all $1 \leq p < p_n$, we have
    \begin{equation*}
        \|\chi_{(C)}^{\bl} * f \|_{L^{p'}(\R^n)} \lesssim_\gamma C^{O(1)} 2^{-2D(\bdell)/p'} \| f \|_{L^p(\R^n)}.
    \end{equation*} 
\end{restatable}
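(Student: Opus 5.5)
We prove Proposition~\ref{prop: chi conv} by interpolation on the line of duality. The target bound is $\|\chi^{\bdell}_{(C)} * f\|_{L^{p'}} \lesssim 2^{-2D(\bdell)/p'}\|f\|_{L^p}$ for $p < p_n$. By Riesz--Thorin it suffices to have the two endpoint bounds
\[ \|\chi^{\bdell}_{(C)} * f\|_{L^\infty} \lesssim C^{O(1)} \|f\|_{L^1} \quad\textrm{and}\quad \|\chi^{\bdell}_{(C)} * f\|_{L^2} \lesssim C^{O(1)} 2^{-D(\bdell)(1-\delta)}\cdot(\textrm{loss})\|f\|_{L^2}, \]
arranged so that they interpolate to the claimed exponent $2/p'$ with an $\varepsilon$-loss absorbed by $p<p_n$. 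The $L^1\to L^\infty$ bound is trivial since $0 \le \chi^{\bdell}_{(C)} \lesssim 1$: the plates $\overtheta^{\bdell}_{\bdnu}$ have bounded overlap after splitting $\fkN(\bdell)$ via Lemma~\ref{lem: force sep}, and Lemma~\ref{lem: plate sep} separates them. The real content is the $L^2$ bound, namely $\|\widehat{\chi^{\bdell}_{(C)}}\|_{L^\infty}$. Simply summing \eqref{eq: high 2} at $\xi=0$ gives $|\fkN(\bdell)|\,2^{-D(\bdell)}$, which is too weak. So we must exploit the fact that at frequencies $\xi$ away from the origin only few dual plates $(\Theta^{\bdell}_{\bdnu})^\circ$ contain $\xi$, which is Lemma~\ref{lem: dual overlap}.

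This leads to a high-low splitting with induction on scales, in the variable $\bdell$. Take a frequency cutoff $\phi$ at scale $|\xi| \lesssim \varepsilon^{-1}\lambda\rho_2(\bdell)^{-1}$ and write $\chi^{\bdell}_{(C)} = \chi^{\bdell}_{(C)}*\check\phi + \chi^{\bdell}_{(C)}*(\delta-\check\phi)$.

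\textbf{High part.} Since $M\ge3$, we have $\rho_1\le\rho_2$ with a genuine gap. For each fixed $\xi$ in the high region, group the centres $s_{\bdnu}^{\bdell}$ into intervals of length $\varepsilon$. By Lemma~\ref{lem: dual overlap}, on each interval the $s$ with $\xi\in\lambda(\Theta^{\bdell}_{\bdnu})^\circ$ lie in an interval of length $\lesssim\lambda\rho_2^{-1}|\xi|^{-1}$. Because the centres of $\fkN(\bdell)$ form a mesh of spacing roughly $c2^{-\|\bdell\|}$, this counts the relevant $\bdnu$. Summing \eqref{eq: high 2} over dyadic $\lambda$, using the $(1+\cdots)^{-2}$ tail to handle plates with $\xi\notin\lambda\Theta^\circ$, should give $\|\widehat{\chi}\,(1-\phi)\|_\infty\lesssim C^{O(1)}2^{-2D(\bdell)}\cdot(\textrm{count})$. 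The exponents must be checked against $D$ via \eqref{eq: ej exp}; I expect the count to match because the cone geometry makes the number of plates exactly compensate $2^{D}$ at the critical exponent $p_n$.

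\textbf{Low part.} Convolving with $\check\phi$ at scale $\rho_2$ essentially fattens each plate in the $v_1$ direction from $\rho_1$ to $\rho_2$. The union of fattened plates is then comparable to a union of plates $\Lambda_\gamma(\tilde{\bdell};C')$ for a coarser index $\tilde{\bdell}$, obtained by decreasing the smallest active $\ell_M$ (merging adjacent centres, in the spirit of Lemma~\ref{lem: plate sep}). We bound this part by the induction hypothesis applied to $\tilde{\bdell}$, together with the smaller height $\rho_1/\rho_2$ factor, which gains a power $2^{-\ell}$. The base case, when all $\ell_j$ vanish beyond the top level, is a single bounded family of plates handled by Young's inequality.

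The main obstacle is this low-frequency containment: showing that $\chi^{\bdell}_{(C)}*|\check\phi|$ is dominated by a coarser $\chi^{\tilde{\bdell}}_{(C')}$ times the correct factor. The difficulty is that fattening may destroy the lower bounds $|\gamma^{(j)}\cdot x|\gtrsim\rho_j$ defining $\Theta_{M,N}$ when $M$ changes. I would first try to prove a containment lemma, presumably the paper's Lemma~\ref{lem: Lambda contain}, using Taylor expansion of $\gamma^{(j)}(s)\cdot x$ about neighbouring centres, exactly as in the proof of Lemma~\ref{lem: plate sep}. I would also keep the constant $C$ growing only by a fixed factor at each of the $O(\|\bdell\|)$ induction steps, which is what forces the $\varepsilon$-loss and hence the open range $p<p_n$.
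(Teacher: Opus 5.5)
\textbf{There is a genuine gap in your high part.} You send every frequency $|\xi| \gtrsim \rho_2(\bdell)^{-1} = 2^{D_2(\bdell)}$ to the piece you treat by interpolating $L^1 \to L^\infty$ against $L^2 \to L^2$. For that to give $2^{-2D(\bdell)/p'}$, you need $|\widehat{\chi_{(C)}^{\bdell}}(\xi)| \lesssim 2^{O(K)} 2^{-D(\bdell)}$ on the whole region. (Each $\widehat{\chi_{\bdnu}^{\bdell}}$ has size $2^{-D(\bdell)}$, not $2^{-2D(\bdell)}$.) Lemma~\ref{lem: dual overlap} does not give this. It confines the relevant centres to an interval of length $\lesssim \rho_2(\bdell)^{-1}|\xi|^{-1}$. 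The centres are $2^{-\|\bdell\|}$-separated and $2^{-\|\bdell\|} = 2^{-D_1(\bdell)}\rho_2(\bdell)^{-1}$, so the count is only $\lesssim 1 + 2^{D_1(\bdell)}|\xi|^{-1}$. At $|\xi| \sim 2^{D_2(\bdell)}$ this is $\sim 2^{\|\bdell\|}$, no better than at $\xi = 0$, so your interpolation loses $2^{2\|\bdell\|/p'}$. The count becomes $O(1)$ only once $|\xi| \gtrsim \rho_1(\bdell)^{-1} = 2^{D_1(\bdell)}$.

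So your expectation that the ``count matches'' is false, and this is not an artefact of the counting. Take $n = M = N = 3$ and $\bdell = (\ell)$. Then $\chi_{(C)}^{\bdell}$ is essentially the $2^{-2\ell}$-neighbourhood of a two-dimensional cone with one non-vanishing principal curvature. Stationary phase gives $|\widehat{\chi_{(C)}^{\bdell}}(\xi)| \sim 2^{-2\ell}|\xi|^{-1/2}$ for $\xi$ normal to the cone with $|\xi| \ll 2^{2\ell}$. That is $\sim 2^{-5\ell/2}$ at $|\xi| \sim 2^{\ell}$, whereas you need $2^{-3\ell}$.

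You have also misplaced the difficulty. The containment you worry about is Lemma~\ref{lem: Lambda contain} plus a small neighbourhood. Your low part at scale $\rho_2(\bdell)$ actually closes, with gain $2^{-\delta(p)\|\bdell\|}$ by Lemma~\ref{lem: D exp}.

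\textbf{The missing idea} is that the whole intermediate range $2^{D_2(\bdell)} \lesssim |\xi| \lesssim 2^{D_1(\bdell)}$ must go through the induction, with the coarser index matched to the frequency. At frequency $R$ the paper takes $\tbdell$ with $2^{D_1(\tbdell)} \approx R$. It uses the bound $|P_R \chi_{(C)}^{\bdell}| \lesssim C^{O(1)} R\, 2^{-D_1(\bdell)} \bdone_{\Lambda_\gamma(\bdell'; C')}$ and gains $2^{-\delta(p)(D_1(\bdell) - D_1(\tbdell))}$ from Lemma~\ref{lem: D exp}. The bands are handled separately: Proposition~\ref{prop: low} for $R \le 2^{D_2+K}$, Proposition~\ref{prop: high} for $R \ge 2^{D_1-K}$, and induction in between. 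They are then reassembled with the Calder\'on decomposition and the Littlewood--Paley square function.

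Your two-piece scheme can be repaired by moving the cutoff up to $2^{D_1(\bdell)-K}$. The high part then has $L^2$ norm $\lesssim 2^{K} 2^{-D(\bdell)}$ by the count above. The low part is a fattening only to scale $2^{K}\rho_1(\bdell)$, of height $\lesssim 2^{-K}$. It is dominated by a single coarser family with $D_1(\tbdell) \approx D_1(\bdell) - K$, which gives the gain $2^{-\delta(p)K}$.

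Finally, the induction does not close by letting $C$ grow along $O(\|\bdell\|)$ steps, which would cost $O(1)^{\|\bdell\|}$. It closes because the hypothesis is stated uniformly as $\bdC_\gamma C^{W} 2^{-2D(\bdell)/p'}$ for all $C$. Each step then pays only $(C'/C)^{W} = O_\gamma(1)$, which the strict gain $2^{-\delta(p)K}$ beats. That strict gain is where $p < p_n$ enters.
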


Here the exponent $p_n$ is as in \eqref{eq: U gamma}. In practice, we shall take $C = O_{\gamma}(1)$ and so the dependence on this parameter is harmless; however, it is useful to loosely track the dependence on $C$ in view of certain technicalities in the method used to prove Proposition~\ref{prop: chi conv} below; namely, the inductive argument used to prove that proposition for a certain value of $C$ requires using the induction hypothesis with a slightly larger value of $C$.

We shall show Proposition~\ref{prop: chi conv} implies Proposition~\ref{prop: Lambda conv} at the end of this section. By combining the symbol decomposition from \S\ref{sec: Lp improv II} with Proposition~\ref{prop: Lambda conv}, it is a straightforward matter to conclude the proof of Theorem~\ref{thm: symbolconvolutionbound}. We describe this reduction in \S\ref{sec: Lp improv IV} below.

We briefly describe the strategy used to prove Proposition~\ref{prop: chi conv}. We use a variant of the \textit{high-low} method, featured in, for instance, \cite{GWZ2020}. We break the kernel \smash{$\chi^{\bdell}_{(C)}$} into Littlewood--Paley pieces. At high frequencies, the dual plates $\Theta_{\bdnu}^{\bdell, \circ}$ are disjoint, which yields a gain via orthogonality. At low frequencies, the plates are blended out, which gives a favourable gain which we exploit using Young's convolution inequality. An induction-on-scale then handles the intermediate frequency scales, essentially by reduction to the previous two cases. 




\subsection{Littlewood--Paley projectors} We use a variant of the standard Littlewood--Paley decomposition, often referred to as a \textit{Calder\'on decomposition}, which the authors learned from \cite{Christ_notes}.

Fix $\psi \in C^{\infty}_c(\R^n)$ a real-valued, radial function which is not identically zero and satisfies
\begin{equation*}
  \supp{} \psi \subseteq \{ x \in \R^n : 1/2 \leq |x| \leq 1 \} \quad \textrm{and} \quad \int_{\R^n} \psi(x)\,\ud x = 0. 
\end{equation*}
It follows that the Fourier transform $\widehat{\psi} \in \cS(\R^n)$ is a real-valued, radial function which is not identically zero and satisfies $\widehat{\psi}(0)  = 0$. For $R > 1$, define
\begin{equation*}
    m_R(\xi) \coloneq \int_{R^{-1}}^R |\widehat{\psi}( {t^{-1}} \xi)|^2 \frac{\ud t}{t} \quad \textrm{and} \quad  m(\xi) \coloneq \int_0^{\infty} |\widehat{\psi}( {t^{-1}} \xi)|^2 \frac{\ud t}{t} \qquad \textrm{for all $\xi \in \widehat{\R}^n$.}
\end{equation*}
It is a straightforward matter to verify the integral defining $m$ converges and, moreover, $m$ is bounded. Thus, $m$ defines a real-valued, radial function and, by a change of variable of integration, satisfies $m(r\xi) = m(\xi)$ and $r > 0$. Consequently, $m$ is a non-zero constant function. Replacing $\psi$ with an appropriate scalar multiple of itself, we may assume $m(\xi) = 1$, and thus $m_R(\xi) \to 1$ as $R \to \infty$ for all $\xi \in \widehat{\R}^n$.

Given  $R > 0$, define $\psi_R(x) \coloneq R^n \psi(Rx)$ and the Littlewood--Paley projector
\begin{equation*}
    P_R f \coloneq f \ast \psi_R \qquad \textrm{for $f \in \cS(\R^n)$.}
\end{equation*}
By the Fourier inversion formula and Fubini's theorem,
\begin{equation*}
  f(x) - \int_{R^{-1}}^R (P^2_t f)(x)\frac{\ud t}{t} = \int_{\widehat{\R}^n}e^{2 \pi i x \cdot \xi} (1 - m_R(\xi)) \widehat{f}(\xi)\,\ud \xi.
\end{equation*}
Thus, for all $2 \leq q \leq \infty$, by the Hausdorff--Young inequality and monotone convergence theorem,
\begin{equation*}
  \lim_{R \to \infty}\left\|f - \int_{R^{-1}}^R (P_{t}^2 f)(\,\cdot\,)\frac{\ud t}{t}\right\|_{L^{q}(\R^n)} = \lim_{R \to \infty} \left\|(1 - m_R) \widehat{f}\,\right\|_{L^{q'}(\R^n)} = 0.
\end{equation*}
By decomposing the integral and applying a change of variable, 
\begin{equation}\label{eq: LP 1}
f = \sum_{k \in \Z} \int_{2^{-k-1}}^{2^{-k}} P_t^2 f(\,\cdot\,)\frac{\ud t}{t} = \sum_{k \in \Z} \int_1^2 P^2_{t 2^k} f(\,\cdot\,)\frac{\ud t}{t},
\end{equation}
where the convergence holds in the $L^q$ sense for all $2 \leq q \leq \infty$.

The main advantage of this decomposition is that we is expresses $f$ as a superposition of functions of the form $P_s^2 f$, for $s \in \R$, where, most importantly, the convolution kernel $\psi_s$ of $P_s$ has compact support.




\subsection{Low frequency estimate} We begin by proving the following estimate at an individual frequency bandwidth.

\begin{proposition} \label{prop: low} Let $\bdell \in \N_0^{N-M+1}$ for some $3 \leq M \leq N \leq n$ and $C \geq 1$. For all $1 \leq p \leq p_n$ and $0 < R \leq 2^{D_1(\ell)}$ we have,
    \begin{equation} \label{eq: low a}
        \left\|\left(P_R \chi_{(C)}^{\bdell}\right) * f\right\|_{L^{p'}(\R^n)} \lesssim_{\gamma} C^{O(1)} (1 + R 2^{-D_2(\bdell)})^{2/p-1}2^{-2D(\bdell)/p'} \|f\|_{L^p(\R^n)}. 
    \end{equation}
\end{proposition}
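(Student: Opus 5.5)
The plan is to prove \eqref{eq: low a} by interpolating between an $L^1 \to L^\infty$ estimate and an $L^2 \to L^2$ estimate, then interpolating again with the $p = p_n$ endpoint. Convolution with $P_R\chi_{(C)}^{\bdell}$ is a linear operator, so Riesz--Thorin on the line of duality applies. The exponent $(1+R2^{-D_2(\bdell)})^{2/p-1}$ suggests two bounds. The first is the $L^1\to L^\infty$ bound $\|P_R\chi^{\bdell}_{(C)}\|_{L^\infty} \lesssim C^{O(1)}(1+R2^{-D_2(\bdell)})\,2^{-2D(\bdell)/p_n'}$, which is what the statement gives at $p=1$. The second is the trivial $L^2\to L^2$ bound $\|\widehat{P_R\chi}\|_{L^\infty}\le \|\chi^{\bdell}_{(C)}\|_{L^1}$. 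We will check that the latter is $\lesssim C^{O(1)}$ (at $p=2$ the claimed bound is $2^{-D(\bdell)}\cdot 1$, so in fact we need $\|\chi_{(C)}^{\bdell}\|_{L^1}\lesssim C^{O(1)} 2^{-D(\bdell)}\cdot\#\fkN(\bdell)\cdots$). So the first step is bookkeeping. We have $|\overtheta^{\bdell}_{\bdnu}|\lesssim C^{O(1)}2^{-D(\bdell)}$ and $\#\fkN(\bdell)\lesssim C^{O(1)}2^{\|\bdell\|}$, so the $L^1$ norm is $C^{O(1)}2^{\|\bdell\|-D(\bdell)}$. For $p$ between $1$ and $p_n$, the target exponent is then obtained by interpolating the $L^\infty$ kernel bound (weight $2/p'-\,$) against this. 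The identity to verify is that $D(\bdell) = \sum_j \binom{j}{2}\ell_j$ and $\|\bdell\|$ combine with $p_n' = n(n+1)/2$ so that the interpolated exponent is at most $-2D(\bdell)/p'$; this uses $M\ge 3$, so that $\binom{j}{2}\ge j$ dominates $\ell_j$ appropriately.

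The key step is the $L^\infty$ estimate for $P_R\chi_{(C)}^{\bdell} = \sum_{\bdnu}\psi_R*\chi^{\bdell}_{\bdnu}$ at a point $x$. Since $\psi_R$ is supported in a ball of radius $R^{-1}$ and $R\le 2^{D_1(\bdell)} = \rho_1(\bdell)^{-1}$, each term satisfies $|\psi_R*\chi_{\bdnu}^{\bdell}(x)|\lesssim R^n|\overtheta_{\bdnu}^{\bdell}\cap B(x,R^{-1})|$, and also $\le \|\chi\|_\infty\le 1$. So we must count the plates meeting $B(x,R^{-1})$. When $R\le 2^{D_2(\bdell)}$ the ball has radius $\ge \rho_2$. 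Here the separation Lemma~\ref{lem: plate sep} (after using Lemma~\ref{lem: force sep} to split $\fkN(\bdell)$ into $O(1)$ classes satisfying hypothesis (iii), and working in intervals of length $\varepsilon$ in $s$) shows that plates with centres $s_1,s_2$ are $\gtrsim C^{-1}\rho_M|s_1-s_2|$ apart. Consequently, centres of plates meeting the ball lie in an interval of length $\lesssim C R^{-1}\rho_M^{-1}$. Summing volumes over those plates and using that centres are $c2^{-\|\bdell\|}$-spaced gives the $L^\infty$ bound in terms of $R$. Each plate intersected with a ball of radius $r\in[\rho_1,\rho_2]$ has volume $\lesssim r\prod_{j\ge2}\min(\rho_j, r)$. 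For $R>2^{D_2(\bdell)}$ the ball is thinner than $\rho_2$, the intersection volumes shrink by the factor $R2^{-D_2}$ relative to the count, and this produces the factor $(1+R2^{-D_2(\bdell)})$.

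The main obstacle is this counting and volume step: one must show that the resulting $L^\infty$ bound is at most $C^{O(1)}(1+R2^{-D_2})2^{-2D(\bdell)/p_n'}$ uniformly over $\bdell$ with $M\ge3$. This requires matching the combinatorial exponents $D_j(\bdell)$ against the number of overlapping plates. I would first treat the model case $R=2^{D_1(\bdell)}$, where the ball is comparable to the thinnest direction. I would then reduce general $R$ to it by monotonicity in the radius, checking the exponent arithmetic via $D(\bdell)=\sum_j D_j(\bdell)$ and the hypotheses (i)--(ii) of Lemma~\ref{lem: plate sep}, which hold by construction of $\bdrho(\bdell)$. Plates whose centres are closer than the threshold in hypothesis (ii) are grouped together and treated as a single fattened plate, at $O(C^{O(1)})$ cost.
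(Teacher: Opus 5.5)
\textbf{There is a genuine gap: the step you yourself call the main obstacle is mis-specified, and the route you propose for it does not work.}

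Your skeleton is the paper's. Riesz--Thorin between the $L^1\to L^\infty$ norm $\|P_R\chi^{\bdell}_{(C)}\|_{L^\infty}$ and the $L^2\to L^2$ norm $\le\|P_R\chi^{\bdell}_{(C)}\|_{L^1}\lesssim C^{O(1)}2^{\|\bdell\|-D(\bdell)}$, at $\theta=2/p'$, gives the same constant as log-convexity plus Young with $r=p'/2$. Your plate count via Lemmas~\ref{lem: force sep} and~\ref{lem: plate sep} is \eqref{eq: low ker 5}.

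The endpoints are misread:
\begin{itemize}
\item At $p=1$ the statement reads $\|P_R\chi^{\bdell}_{(C)}\|_{L^\infty}\lesssim C^{O(1)}(1+R2^{-D_2(\bdell)})$, with no factor $2^{-2D(\bdell)/p_n'}$.
\item $p=2$ is not in the range, since $p_n<2$. So there is no $p=2$ claim to verify, and no separately available $p=p_n$ endpoint to interpolate with.
\end{itemize}
Given your $L^1$ bound, what the interpolation actually needs is $\|P_R\chi^{\bdell}_{(C)}\|_{L^\infty}\lesssim C^{O(1)}(1+R2^{-D_2(\bdell)})\,2^{-\|\bdell\|/(r-1)}$, with $r=p'/2\ge 3$, for every $0<R\le 2^{D_1(\bdell)}$. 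Your stated target implies this, since $D(\bdell)\ge 3\|\bdell\|$ and $p'\ge p_n'\ge 6$, but it still has to be proved at each $R$.

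Your sketch does not prove it.
\begin{itemize}
\item \emph{Volume.} For $\rho=R^{-1}\in[\rho_1,\rho_2]$, a plate meets $B(x,\rho)$ in volume $\lesssim\rho_1\rho^{n-1}$ (in general $\prod_j\min\{\rho_j,\rho\}$). Your formula $r\prod_{j\ge2}\min(\rho_j,r)$ is just $r^n$ and yields only an $O(1)$ density.
\item \emph{Where the gain comes from.} At $R=2^{D_2(\bdell)}$, only $1+R^{-1}2^{D_{M-1}(\bdell)}\le 2$ plates meet the ball (using $M\ge3$). Each fills a fraction $R2^{-D_1(\bdell)}=2^{-\|\bdell\|}$ of it. For larger $R$ the density $R2^{-D_1}=R2^{-D_2}2^{-\|\bdell\|}$ \emph{grows}, which is the source of $(1+R2^{-D_2})$.
\item \emph{Monotonicity reduction.} Taking $R=2^{D_1(\bdell)}$ as the model case and reducing to it by monotonicity fails. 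Suprema of ball averages are quasi-nondecreasing in $R$, so this only transfers the (sharp) $O(1)$ bound at $R=2^{D_1}$ to smaller $R$. For $R\le 2^{D_2}$ you need $\lesssim 2^{-\|\bdell\|/(r-1)}$. The paper instead computes the full bound \eqref{eq: low ker 6} and uses monotonicity of $M(r;\bdell;R)$ only on $[1,2^{D_2}]$. There the pivot is $R=2^{D_2}$, where the bound is $\lesssim 2^{-\|\bdell\|(1/r'-1/r)}\le 1$. The range $[2^{D_2},2^{D_1}]$ is treated directly.
\item \emph{$0<R<1$ is omitted.} Here all $\lesssim 2^{\|\bdell\|}$ plates may meet the ball and the kernel is $\lesssim R^n2^{\|\bdell\|-D(\bdell)}$. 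One then needs $D(\bdell)-\|\bdell\|\ge 2D(\bdell)/p'$, which is \eqref{eq: low 3}.
\item \emph{Fattened plates.} Merging nearby plates into fattened plates destroys the lower bounds in $\Theta_{M,N}$ that Lemma~\ref{lem: plate sep} requires. Pigeonhole $\fkN(\bdell)$ into $O(C^{O(1)})$ separated subfamilies instead, as the paper does.
\end{itemize}
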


In particular, under the hypotheses of Proposition~\ref{prop: low}, for all $K \in \N_0$, we have 
    \begin{equation*}
        \left\|\left(P_R \chi_{(C)}^{\bdell}\right) * f\right\|_{L^{p'}(\R^n)} \lesssim_{\gamma} C^{O(1)} 2^{(2/p-1)K}2^{-2D(\bdell)/p'} \|f\|_{L^p(\R^n)} 
    \end{equation*}
for $0 < R \leq \min\{2^{D_2(\bdell) + K}, 2^{D_1(\bdell)}\}$.

To prove Proposition~\ref{prop: low}, we establish a preliminary lemma. Given $1 \leq r \leq \infty$ and $\bdell \in \N_0^{N-M+1}$ for $1 \leq M  \leq N \leq n$, and $R \geq 1$, define 
    \begin{equation}\label{eq: M}
     M(r; \bdell; R)  \coloneq 2^{\|\bl\|/r}  \left( 1 + R^{-1}2^{D_{M-1}(\bdell)}\right) \left(R^n \prod_{j = 1}^n \min\{2^{-D_j(\bdell)}, R^{-1}\}\right)^{1/r'},
    \end{equation}
    where $D_0(\bdell) \coloneq \sum_{j=1}^N j \ell_j$ in the $M = 1$ case. Proposition~\ref{prop: low} is then a fairly direct consequence of the following kernel estimate. 

\begin{lemma} \label{lem: low} Let $n \geq 3$, $\bdell \in \N_0^{N - M +1}$ for some $1 \leq M \leq N \leq n$ and $C \geq 1$. 

\begin{enumerate}[\normalfont(a)]
    \item For all $1 \leq r \leq \infty$ and $R \geq 1$, we have
    \begin{equation} \label{eq: low ker a}
        \| P_R \chi_{(C)}^{\bdell} \|_{L^r(\R^n)} \lesssim_\gamma  C^{O(1)} M(r; \bdell; R) 2^{-D(\bdell)/r}. 
    \end{equation}
    \item For all $1 \leq r \leq \infty$ and $0 < R < 1$, we have
    \begin{equation}\label{eq: low ker b}
    \| P_R \chi_{(C)}^{\bdell} \|_{L^r(\R^n)} \lesssim_\gamma C^{O(1)} R^{n/r'}  2^{\|\bdell\|-D(\bdell)}.
     \end{equation}
\end{enumerate} 
\end{lemma}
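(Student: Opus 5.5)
Both bounds come from controlling $P_R\chi_{(C)}^{\bdell} = \sum_{\bdnu} \psi_R * \chi_{\bdnu}^{\bdell}$ piece by piece. We combine an $L^\infty$ bound, obtained by counting overlapping plates, with an $L^1$ bound, obtained from the triangle inequality, and then use $\|g\|_{L^r} \leq \|g\|_{L^\infty}^{1/r'}\|g\|_{L^1}^{1/r}$.

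\emph{Step 1: counting the pieces and their volume.} By Definition~\ref{dfn: mod plates}, $\#\fkN(\bdell) \lesssim C^{O(1)} 2^{\|\bdell\|}$. Each $\overtheta_{\bdnu}^{\bdell}$ is a parallelepiped whose axes have lengths $\sim_{\gamma} C\rho_j(\bdell) = C2^{-D_j(\bdell)}$ in the frame dual to $\gamma^{(j)}(s_{\bdnu}^{\bdell})$. Hence $|\overtheta_{\bdnu}^{\bdell}| \lesssim C^{O(1)}2^{-D(\bdell)}$.

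\emph{Step 2: part (b), $R<1$.} Here $\psi_R$ is supported on a ball of radius $\sim R^{-1} \geq 1$, while every plate has all side lengths $\leq C$, since $\rho_j \leq 1$. By Young's inequality,
\[
\|\psi_R*\chi_{\bdnu}^{\bdell}\|_{L^\infty} \leq \|\psi_R\|_{L^\infty}\|\chi_{\bdnu}^{\bdell}\|_{L^1} \lesssim R^n 2^{-D(\bdell)}.
\]
Moreover, $\psi_R * \chi_{(C)}^{\bdell}$ is supported in a set of volume $O(R^{-n})$, because all plates lie in a ball of radius $O_\gamma(C)$. Summing over the $2^{\|\bdell\|}$ values of $\bdnu$ gives an $L^\infty$ bound of $R^n2^{\|\bdell\|-D(\bdell)}$. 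Multiplying by the support volume to the power $1/r$ gives $R^{n-n/r}2^{\|\bdell\|-D(\bdell)}$, which is \eqref{eq: low ker b}.

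\emph{Step 3: part (a), $R\geq 1$, the $L^1$ bound.} By the triangle inequality and $\|\psi_R\|_{L^1}=O(1)$,
\[
\|P_R\chi_{(C)}^{\bdell}\|_{L^1} \lesssim 2^{\|\bdell\|-D(\bdell)}.
\]
This is consistent with \eqref{eq: low ker a} at $r=1$ up to the factor $(1+R^{-1}2^{D_{M-1}})$, so there is room to spare.

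\emph{Step 4: part (a), the $L^\infty$ bound.} Fix $x$. The function $\psi_R*\chi_{\bdnu}^{\bdell}$ is bounded by $R^n|\overtheta_{\bdnu}^{\bdell}\cap B(x,R^{-1})| \lesssim R^n\prod_j\min\{C2^{-D_j},R^{-1}\}$. It is nonzero only if $x$ lies in the $R^{-1}$-neighbourhood of $\overtheta_{\bdnu}^{\bdell}$. The key claim is that at most $O_\gamma(C^{O(1)}(1+R^{-1}2^{D_{M-1}(\bdell)}))$ indices $\bdnu$ have this property.

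To prove the claim, we use the separation Lemma~\ref{lem: plate sep}. Lemma~\ref{lem: force sep} first splits $\fkN(\bdell)$ into $O_\gamma(C^{O(1)})$ classes satisfying hypothesis (iii), and the finer mesh with parameter $c \leq \kappa_B C^{-2}$ in Definition~\ref{dfn: mod plates} supplies the smallness $|s_2-s_1|\leq\varepsilon$ needed locally. Within one class, we group centres $s_{\bdnu}^{\bdell}$ into blocks at separation scale $\varepsilon^{-1}\rho_M^{-1}\rho_{M-1}$. Centres differing in coordinates $\nu_j$ with $j>M$ lie at distance $\geq$ that scale, so Lemma~\ref{lem: plate sep} separates the corresponding plates by $\gtrsim \rho_M|s_2-s_1|$.

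Only the innermost index $\nu_M$ then remains free. Consecutive values of $\nu_M$ give centres spaced by $\sim 2^{-\|\bdell\|}$, and the plates shift by $\gtrsim \rho_M 2^{-\|\bdell\|} \approx 2^{-D_{M-1}(\bdell)}$. A ball of radius $R^{-1}$ therefore meets $O(1+R^{-1}2^{D_{M-1}})$ of them. For $M=1$ we use the convention $D_0=\sum_j j\ell_j$.

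\emph{Step 5: interpolation.} Combining the $L^1$ bound of Step 3 with the $L^\infty$ bound of Step 4 via $\|g\|_{L^r} \leq \|g\|_{L^\infty}^{1/r'}\|g\|_{L^1}^{1/r}$ yields
\[
2^{\|\bdell\|/r}\,2^{-D/r}\,\big(1+R^{-1}2^{D_{M-1}}\big)^{1/r'}\Big(R^n\prod_j\min\{\cdot\}\Big)^{1/r'} \leq M(r;\bdell;R)\,2^{-D(\bdell)/r}.
\]

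The main obstacle is the overlap count in Step 4. The centres form a hierarchical lattice, and getting this count right requires verifying the hypotheses of Lemma~\ref{lem: plate sep} uniformly over all pairs, which is exactly what Lemma~\ref{lem: force sep} and the $\kappa_B$-refined mesh are designed to provide.
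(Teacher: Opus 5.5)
Your architecture matches the paper's: an $L^\infty$ bound from the pointwise estimate $|\psi_R*\chi^{\bdell}_{\bdnu}|\lesssim C^{O(1)}R^n\prod_j\min\{2^{-D_j(\bdell)},R^{-1}\}$ times an overlap count, an $L^1$ bound, and log-convexity. Part (b) is essentially the paper's argument. The one genuine difference is your $L^1$ bound in (a). Applying Young's inequality plate by plate gives $\|P_R\chi^{\bdell}_{(C)}\|_{L^1}\lesssim C^{O(1)}2^{\|\bdell\|-D(\bdell)}$ directly, whereas the paper multiplies its $L^\infty$ bound by the volume of the union of the $R^{-1}$-neighbourhoods of the plates. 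Your route is shorter, needs no volume estimate, and avoids the factor $1+R^{-1}2^{D_{M-1}(\bdell)}$. That part is fine.

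The gap is in your justification of the overlap count \eqref{eq: low ker 5}, which is the heart of the lemma. There are three problems.

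\emph{First}, the mesh parameter $c$ of Definition~\ref{dfn: mod plates} does not give $|s_2-s_1|\le\varepsilon$; it is there for Lemma~\ref{lem: Lambda contain}. The centres still fill $[-2,2]$, so you must also split into $O(\varepsilon^{-1})$ groups of diameter at most $\varepsilon$. This is affordable since $\varepsilon\gtrsim_B C^{-2}$.

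\emph{Second}, your claim that indices differing in some $\nu_j$ with $j>M$ have centres at distance $\ge\varepsilon^{-1}2^{-\|\bdell\|}$ is false. All $s^{\bdell}_{\bdnu}$ lie on the lattice $c2^{-\|\bdell\|}\Z$, and $\bdnu\mapsto s^{\bdell}_{\bdnu}$ is $C^{O(1)}$-to-one, so such indices can give equal or adjacent centres. For instance, replacing $(\nu_M,\nu_{M+1})$ by $(\nu_M-2^{\ell_M},\nu_{M+1}+1)$ leaves the centre unchanged.

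\emph{Third}, consecutive values of $\nu_M$ are spaced by $c2^{-\|\bdell\|}$. This is below the threshold $\varepsilon^{-1}\rho_M(\bdell)^{-1}\rho_{M-1}(\bdell)=\varepsilon^{-1}2^{-\|\bdell\|}$ in hypothesis (ii) of Lemma~\ref{lem: plate sep}. Such plates genuinely intersect: shifting $s$ by $c2^{-\|\bdell\|}$ changes each $\gamma^{(j)}(s)\cdot x$ by only $O_\gamma(Cc\,\rho_j(\bdell))$. So the ``shift'' you invoke does not exist.

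The repair is the paper's argument, which never uses the hierarchical coordinates. Split the centres (with multiplicity) into $O_\gamma(C^{O(1)})$ classes that are $\varepsilon^{-1}2^{-\|\bdell\|}$-separated and have diameter at most $\varepsilon$. By Lemma~\ref{lem: force sep} with $\delta=\varepsilon$ and $\varepsilon_j=\rho_j(\bdell)^{-1}\rho_{j-1}(\bdell)$, the classes can also be taken to satisfy hypothesis (iii). Now suppose $x$ lies in the $R^{-1}$-neighbourhoods of $\overtheta^{\bdell}_{\bdnu_1}$ and $\overtheta^{\bdell}_{\bdnu_2}$ with $\bdnu_1,\bdnu_2$ in one class. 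Then these plates are within $2R^{-1}$ of each other, so Lemma~\ref{lem: plate sep} applied to this pair gives $|s^{\bdell}_{\bdnu_1}-s^{\bdell}_{\bdnu_2}|\lesssim C^{O(1)}R^{-1}\rho_M(\bdell)^{-1}$. The separation then allows at most $1+C^{O(1)}R^{-1}\rho_M(\bdell)^{-1}2^{\|\bdell\|}=1+C^{O(1)}R^{-1}2^{D_{M-1}(\bdell)}$ indices per class. It is this pairwise bound, not a statement about neighbouring plates, that yields the count.

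Finally, Lemma~\ref{lem: plate sep} is stated only for $M\ge 3$, and for $M=1$ the count you assert is actually false. So the convention $D_0(\bdell)=\sum_j j\ell_j$ does not rescue that case. For $\bdell=(\ell_1,0,\dots,0)$ all $\rho_j(\bdell)=1$. Take a point $x$ with $\gamma^{(j)}(s_0)\cdot x=3/4$ for $1\le j\le n$. It lies in every plate whose centre is within a fixed distance $\eta_\gamma>0$ of $s_0$, hence in $\gtrsim_\gamma 2^{\ell_1}$ of them. Your bound instead gives $O(C^{O(1)})$ once $R\ge 2^{\ell_1}$. The paper's proof has the same limitation, but it is harmless because Lemma~\ref{lem: low} is only applied with $M\ge 3$.
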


Using the above lemma, it is a straightforward matter to deduce Proposition~\ref{prop: low}.

\begin{proof}[Proof (of Proposition~\ref{prop: low}, assuming Lemma~\ref{lem: low})] Fix $\bdell \in \N_0^{N- M +1}$ for $3 \leq M \leq N \leq n$ and $1 \leq p \leq p_n$. Let $r \coloneq p'/2$, noting that $p'/2 \geq p_n'/2 = n(n+1)/4 \geq 3 > 2$, where we applied the definition of $p_n$ from \eqref{eq: U gamma}.\medskip

\noindent \textit{Case 1: $1 \leq R \leq 2^{D_1(\bdell)}$.} The function $M(r; \bdell; R)$ is easily seen to be continuous and nondecreasing over the range $1 \leq R \leq 2^{D_2(\bdell)}$. Since $M \geq 3$, we also have $2^{D_{M-1}(\bdell)} \leq 2^{D_2(\bdell)}$.  Thus, fixing $1 \leq R \leq 2^{D_2(\bdell)}$, we may bound
    \begin{align}
    \nonumber
       M(r; \bdell; R) \leq M\left(r; \bdell; 2^{D_2(\bdell)}\right) &\lesssim 2^{\|\bdell\|/r} 2^{-(D_1(\bdell) - D_2(\bdell))/r'}\\
       \label{eq: low 1}
       &= 2^{-\|\bdell\|(1/r' - 1/r)} \leq 1.
    \end{align}
    %
    %
    %
    %
    %
    %
    %
    For $2^{D_2(\bdell)} < R \leq 2^{D_1(\bdell)}$, we have $R^{-1}2^{D_{M-1}(\bdell)} \leq 1$. Exploiting that $D_1(\bdell) = D_2(\bdell) + \| \bdell \|$ and $r \geq 2$, we conclude
    \begin{equation}\label{eq: low 2}
    \begin{split}
        M(r; \bdell; R) &\lesssim 2^{\|\bdell\|/r} \left( R 2^{-D_1(\bdell)} \right)^{1/r'}\\
        &= 2^{\|\bdell\|(1/r - 1/r')} (R2^{-D_2(\bdell)})^{1/r'} \leq (R2^{-D_2(\bdell)})^{1/r'}. 
    \end{split}
    \end{equation}
    By combining \eqref{eq: low ker a} with either \eqref{eq: low 1} or \eqref{eq: low 2} and applying Young's inequality, for all $1 \leq R \leq 2^{D_1(\bdell)}$ we have
    \begin{align*}
        \left\|\left(P_R \chi^{\bdell}\right) * f\right\|_{L^{p'}(\R^n)} &\leq \|P_R \chi^{\bdell}\|_{L^{p'/2}(\R^n)}  \|f\|_{L^p(\R^n)} \\
        &\lesssim_{\gamma} C^{O(1)} (1 + R 2^{-D_2(\bdell)})^{2/p-1}2^{-2D(\bdell)/p'} \|f\|_{L^p(\R^n)},
    \end{align*}
    which is the required estimate \eqref{eq: low a} in this case. \medskip

\noindent \textit{Case 2: $0 < R < 1$.} Observe that
\begin{equation*}
   \frac{j(j-1)}{2} \geq \frac{M(M-1)}{2} \geq 3 \quad \textrm{for} \quad M \leq j \leq N\quad \textrm{and} \quad 1 - \frac{2}{p'} \geq 1 - \frac{1}{3} = \frac{2}{3}. 
\end{equation*} Consequently, recalling the formula for $D(\bdell)$ from \eqref{eq: D and E}, we have
\begin{equation}\label{eq: low 3}
    D(\bdell) - \|\bdell\| = \frac{2D(\bdell)}{p'} + \sum_{j=M}^N \left( \frac{j(j-1)}{2}\left(1 - \frac{2}{p'}\right) - 1\right)\ell_j \geq \frac{2D(\bdell)}{p'}.
\end{equation}
Let $0 < R < 1$. By combining \eqref{eq: low ker b} and \eqref{eq: low 3} and applying Young's inequality,
    \begin{equation*}
        \left\|\left(P_R \chi^{\bdell}\right) * f\right\|_{L^{p'}(\R^n)} \leq \|P_R \chi^{\bdell}\|_{L^{p'/2}(\R^n)}  \|f\|_{L^p(\R^n)}\lesssim_{\gamma} C^{O(1)} 2^{-2D(\bdell)/p'} \|f\|_{L^p(\R^n)},
    \end{equation*}
    which is the required estimate \eqref{eq: low a} in this case. 
\end{proof}

For future reference,  we also highlight a further corollary of Lemma~\ref{lem: low}.

\begin{corollary}\label{cor: kernel est} If $\bdell \in \N_0^{N - M +1}$ for some $3 \leq M \leq N \leq n$, $C \geq 1$ and $R \geq 2^{D_2(\bdell)}$, then
\begin{equation*}
    \left\|\left(P_R \chi^{\bdell}_{(C)}\right)\ast f \right\|_{L^{\infty}(\R^n)} \lesssim_{\gamma} C^{O(1)} \|f\|_{L^1(\R^n)}.
\end{equation*}
\end{corollary}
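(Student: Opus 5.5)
The estimate will follow from Young's inequality with the $L^\infty$ norm of the kernel:
\begin{equation*}
\|(P_R\chi^{\bdell}_{(C)}) * f\|_{L^\infty(\R^n)} \leq \|P_R\chi^{\bdell}_{(C)}\|_{L^\infty(\R^n)}\|f\|_{L^1(\R^n)},
\end{equation*}
so it suffices to show $\|P_R\chi^{\bdell}_{(C)}\|_{L^\infty} \lesssim_\gamma C^{O(1)}$ for $R \geq 2^{D_2(\bdell)}$. Since $D_2(\bdell) \geq 0$, we have $R \geq 1$, and Lemma~\ref{lem: low}(a) with $r = \infty$ applies. It gives
\begin{equation*}
\|P_R\chi^{\bdell}_{(C)}\|_{L^\infty} \lesssim_\gamma C^{O(1)} M(\infty;\bdell;R) = C^{O(1)}\bigl(1 + R^{-1}2^{D_{M-1}(\bdell)}\bigr) R^n\prod_{j=1}^n \min\{2^{-D_j(\bdell)},R^{-1}\}.
\end{equation*}

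Next I will evaluate the minima. Definition~\ref{dfn: plate family} gives $D_1(\bdell) \geq D_2(\bdell) \geq \cdots \geq D_n(\bdell) = 0$. For $j \geq 2$ we have $2^{D_j(\bdell)} \leq 2^{D_2(\bdell)} \leq R$, so $\min\{2^{-D_j},R^{-1}\} = R^{-1}$. These $n-1$ factors cancel $R^{n-1}$, and the product becomes
\begin{equation*}
R\min\{2^{-D_1(\bdell)}, R^{-1}\} \leq 1.
\end{equation*}
For the prefactor, $M \geq 3$ gives $D_{M-1}(\bdell) \leq D_2(\bdell)$, so $R^{-1}2^{D_{M-1}(\bdell)} \leq 1$ and the prefactor is at most $2$. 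Hence $M(\infty;\bdell;R) \leq 2$, and the claimed bound follows.

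The only step needing care is confirming that with $r=\infty$ the definition \eqref{eq: M} really reduces as stated: $2^{\|\bdell\|/r} = 1$, $r' = 1$, and $2^{-D(\bdell)/r} = 1$. This is immediate, so no genuine obstacle arises; the corollary is essentially a direct specialisation of Lemma~\ref{lem: low}(a).
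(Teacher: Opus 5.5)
Your proof is correct and is essentially the paper's argument: Lemma~\ref{lem: low}(a) with $r=\infty$, the bound $2^{D_{M-1}(\bdell)} \leq 2^{D_2(\bdell)} \leq R$ from $M \geq 3$ to control the prefactor, and then Young's inequality. The only difference is that you evaluate the minima explicitly, whereas the paper just uses that each factor $\min\{2^{-D_j(\bdell)},R^{-1}\}$ is at most $R^{-1}$, so the product term is at most $1$ for every $R \geq 1$.
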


\begin{proof}  Recalling the definition in \eqref{eq: M}, we have
\begin{equation*}
    M(\infty; \bdell; R) \leq ( 1 + R^{-1} 2^{D_{M-1}(\bdell)}) \lesssim  1,
\end{equation*}
where we have used $2^{D_{M-1}(\bdell)} \leq 2^{D_2(\bdell)}$. Thus, $ \|P_R \chi^{\bdell}\|_{L^{\infty}(\R^n)} \lesssim_{\gamma} C^{O(1)}$ by Lemma~\ref{lem: low}, which immediately implies the desired estimate. 
\end{proof}

We now turn to the proof of the key kernel estimates in Lemma~\ref{lem: low}.

\begin{proof}[Proof (of Lemma~\ref{lem: low})] Let $\bdell \in \N_0^{N - M +1}$ for some $1 \leq M \leq N \leq n$ and $1 \leq r \leq \infty$.\medskip

\noindent (a) Fix $R \geq 1$ and recall $\psi_R$ denotes the convolution kernel of the frequency projection $P_R$. Letting $\rho \coloneq R^{-1}$, we have
\begin{equation*}
    |\psi_R(x)| \lesssim \frac{1}{|B(0,\rho)|} \bdone_{B(0, \rho)}(x) \quad \textrm{and} \quad |\chi_{\bdnu}^{\bdell}(x)| \leq \bdone_{\overtheta_{\bdnu}^{\bdell}}(x) \quad \textrm{for all $\bdnu \in \fkN(\bdell)$.}
\end{equation*}
Thus, 
\begin{equation}\label{eq: low ker 1}
    |P_R \chi_{\bdnu}^{\bdell}(x)| \leq |\psi_R| \ast |\chi_{\bdnu}^{\bdell}|(x) \lesssim \sup_{x \in \R^n} \frac{|B(x,\rho) \cap \overtheta_{\bdnu}^{\bdell}|}{|B(x,\rho)|}. 
\end{equation}
Let $N(\overtheta{\vphantom{\Theta}}_{\bdnu}^{\bdell}, \rho)$ denote the Euclidean $\rho$-neighbourhood of $\overtheta_{\bdnu}^{\bdell}$ and define
\begin{equation*}
    \fkN(\bdell; x) \coloneq \big\{  \bdnu \in \fkN(\bdell) : x \in N(\Theta_{\bdnu}^{\bdell}, \rho) \}.
\end{equation*}
Note that the above observations also show that
\begin{equation}\label{eq: low ker 2}
    x \in \supp{} P_R \chi_{\bdnu}^{\bdell} \quad \Rightarrow \quad \bdnu \in \fkN(\bdell, x).
\end{equation}

By comparing the principal radii of $\overtheta_{\bdnu}^{\bdell}$ with the radius $\rho$, we see that
\begin{equation}\label{eq: low ker 3}
   |B(x, \rho) \cap \overtheta_{\bdnu}^{\bdell}| \lesssim_{\gamma} C^{O(1)} \prod_{j = 1}^n \min\{2^{-D_j(\bdell)},\rho\} \qquad \textrm{for all $x \in \R^n$.}
\end{equation}
Combining \eqref{eq: low ker 1}, \eqref{eq: low ker 2} and \eqref{eq: low ker 3}, and recalling $\rho \coloneq R^{-1}$, we obtain 
\begin{equation} \label{eq: low ker 4}
\begin{split}
    |P_R \chi^{\bdell}(x)| &\leq \sum_{\bdnu \in \fkN(\bdell; x)} |P_R \chi_{\bdnu}^{\bdell}(x)|\\
    &\lesssim_\gamma C^{O(1)} R^n \left(\prod_{j=1}^n \min\{ 2^{-D_j(\bdell)}, R^{-1}\}\right) \#\fkN(\bdell; x).
\end{split}
\end{equation}

We claim that
\begin{equation} \label{eq: low ker 5}
        \# \fkN(\bl;x) \lesssim_{\gamma} C^{O(1)} \big( 1 +  R^{-1}2^{D_{M-1}(\bdell)}\big).
    \end{equation}
We shall show below that this follows from Lemma~\ref{lem: plate sep}. Temporarily assuming \eqref{eq: low ker 5} holds, we may combine it with \eqref{eq: low ker 4} to give
    \begin{equation}\label{eq: low ker 6}
        \| P_R \chi^{\bdell} \|_{L^\infty(\R^n)} \lesssim_\gamma C^{O(1)} \big( 1 +  R^{-1}2^{D_{M-1}(\bdell)}) R^n \left(\prod_{j=1}^n \min\{ 2^{-D_j(\bdell)}, R^{-1}\}\right),
    \end{equation}
    which is the $r = \infty$ case of the required bound \eqref{eq: low ker a}. On the other hand, again by comparing the principal radii of $\overtheta_{\bdnu}^{\bdell}$ with the radius $\rho$, we see that
    \begin{equation*}
        |N(\overtheta_{\bdnu}^{\bdell},\rho)| \lesssim_{\gamma} C^{O(1)} \prod_{j = 1}^n \max\{2^{-D_j(\bdell)}, \rho\}.
    \end{equation*}
    From this, and the bound $\#\fkN(\bdell) \lesssim_{\gamma} C^{O(1)} 2^{\|\ell\|}$, we deduce that
    \begin{equation} \label{eq: low ker 7}
        \left|\supp{} \left(P_R \chi^{\bdell}\right)\right| \lesssim_\gamma   C^{O(1)} \left( \prod_{j = 1}^n \max\{2^{-D_j(\bdell)}, R^{-1}\}\right) 2^{\|\bl\|}.
    \end{equation}
    Combining \eqref{eq: low ker 6} and \eqref{eq: low ker 7} then gives 
    \begin{equation} \label{eq: low ker 8}
         \| P_R \chi^{\bdell} \|_{L^1(\R^n)} \lesssim_\gamma C^{O(1)} \big( 1 +  R^{-1}2^{D_{M-1}(\bdell)}) 2^{\|\bl\| - D(\bdell)}.
    \end{equation}
    Interpolating \eqref{eq: low ker 6} and \eqref{eq: low ker 8} via logarithmic convexity of $L^r$, we obtain \eqref{eq: low ker a} for all $1 \leq r \leq \infty$, as required.\smallskip

It remains to prove the claimed estimate \eqref{eq: low ker 5}. For this, we wish to invoke Lemma~\ref{lem: plate sep}. Observe,
\begin{enumerate}[\normalfont(i)]
    \item $0 < \rho_1(\bdell) \leq \dots \leq \rho_n(\bdell) \leq 1$;
\end{enumerate}
this will ensure that hypothesis (i) of Lemma~\ref{lem: plate sep} holds for our application. To ensure the remaining hypotheses also hold, we decompose the set $\fkN(\bdell)$.

Let $B \geq 1$ be the minimal value such that $\gamma \in \fkG(B)$ and $\varepsilon \coloneq \varepsilon_{B,1} > 0$ be the constant appearing in the statement of Lemma~\ref{lem: plate sep}. Partition $\{ s_{\bdnu}^{\bdell} : \nu \in \fkN(\bdell) \}$ into $O_B(C^{O(1)})$ subsets, each $\varepsilon^{-1}2^{-\|\bdell\|}$-separated and of diameter at most $\varepsilon$. We apply Lemma~\ref{lem: force sep} to each of these subsets, with parameters $\delta \coloneq \varepsilon$ and $\varepsilon_j \coloneq \rho_j(\bdell)^{-1} \rho_{j-1}(\bdell)$ for $1 \leq j \leq n$. From this, using the identity $2^{-\|\bdell\|} = \rho_M(\bdell)^{-1} \rho_{M-1}(\bdell)$, we conclude that 
\begin{equation*}
    \fkN(\bdell) = \bigcup_{\alpha =1}^A \fkN_\alpha(\bdell) \qquad \textrm{where} \qquad A \lesssim_B C^{O(1)}
\end{equation*}
and where, for all $1 \leq \alpha \leq A$, for $\bdnu_1$, $\bdnu_2 \in \fkN_\alpha(\bdell)$ distinct we have
\begin{enumerate}[\normalfont(i)]
\addtocounter{enumi}{1}
    \item $|s_{\bdnu_1}^{\bdell} - s_{\bdnu_2}^{\bdell}| \leq \varepsilon$ and $|s_{\bdnu_1}^{\bdell} - s_{\bdnu_2}^{\bdell}| \geq \varepsilon^{-1} \rho_M(\bdell)^{-1} \rho_{M-1}(\bdell)$;
    \item For $M \leq i \leq N-1$, either 
    \begin{equation*}
        |s_{\bdnu_1}^{\bdell} - s_{\bdnu_2}^{\bdell}| \geq \varepsilon^{-1} \rho_{i+1}(\bdell)^{-1} \rho_i(\bdell) \quad \textrm{or} \quad|s_{\bdnu_1}^{\bdell} - s_{\bdnu_2}^{\bdell}| \leq \varepsilon \rho_{i+1}(\bdell)^{-1} \rho_i(\bdell).
    \end{equation*}
\end{enumerate}

Fix $x \in \R^n$ and define
\begin{equation*}
  \fkN_{\alpha}(\bl; x) \coloneq \left\{ \nu \in \fkN_\alpha(\bdell) : x \in N(\overtheta_{\bdnu}^{\bdell}, \rho) \right\}. 
\end{equation*}
Suppose $\fkN_{\alpha}(\bl; x) \neq \varnothing$ and let $\bdnu_1 \in \fkN_{\alpha}(\bl; x)$. If $\bdnu_2 \in \fkN_{\alpha}(\bl; x)$, then it follows that $\dist(\overtheta_{\bdnu_1}^{\bdell}, \overtheta_{\bdnu_2}^{\bdell}) \lesssim \rho$. Thus, applying Lemma~\ref{lem: plate sep}, noting that (i), (ii) and (iii) above verify the required hypotheses,
\begin{equation*}
    |s_{\bdnu_2}^{\bdell} - s_{\bdnu_1}^{\bdell}| \lesssim C^{O(1)} \rho \rho_M(\bdell)^{-1}.
\end{equation*}
However, by the above reductions the set $\{s_{\bdnu}^{\bdell} : \nu \in \fkN_{\alpha}(\bdell)\}$ is $2^{-\|\bdell\|}$-separated. Thus, it follows that 
\begin{equation*}
    \fkN_{\alpha}(\bl;x) \lesssim_{\gamma} C^{O(1)} \big(1+  2^{\|\bl\|} R^{-1} 2^{D_M(\bdell)}\big) \qquad \textrm{for all $1 \leq \alpha \leq A$.}
\end{equation*}
Summing these estimates, and using the bound $\|\bl\| + D_M(\bdell) = D_{M-1}(\bdell)$, we obtain
    \begin{equation*}
        \# \fkN(\bl;x) \leq \sum_{\alpha = 1}^A \# \fkN_{\alpha}(\bl;x)  \lesssim_B C^{O(1)} \big(1 +  R^{-1}2^{D_{M-1}(\bdell)}),
    \end{equation*}
    which is the claimed bound \eqref{eq: low ker 5}.\medskip

    \noindent (b) We use a minor modification of the above argument. For $0 < R < 1$, we have
    \begin{equation*}
        |P_R \chi_{\bdnu}^{\bdell}(x)| \leq \|\psi_R\|_{L^{\infty}(\R^n)} \|\chi_{\bdnu}^{\bdell}\|_{L^1(\R^n)} \lesssim R^n|\overtheta_{\bdnu}^{\bdell}| \lesssim C^{O(1)}  R^n 2^{-D(\bdell)}.
    \end{equation*}
    Thus, by summing over all $\bdnu \in \fkN(\bdell)$ and trivially bounding $\# \fkN(\bdell) \lesssim 2^{\|\bdell\|}$, we have
       \begin{equation} \label{eq: low ker 9}
    |P_R \chi^{\bdell}(x)| \leq \sum_{\bdnu \in \fkN(\bdell)} |P_R \chi_{\bdnu}^{\bdell}(x)| \lesssim_\gamma C^{O(1)}  R^n 2^{\|\bdell\|-D(\bdell)}.
    \end{equation}
    This is the $r = \infty$ case of the required bound \eqref{eq: low ker b}. 

    Turing to the general $L^r$ bound, here the support of $P_R \chi^{\bdell}$ is contained in a ball of radius $O_B(R^{-1})$ and therefore satisfies $\left|\supp{} \left(P_R \chi^{\bdell}\right)\right| \lesssim_B R^{-n}$. Combining this with \eqref{eq: low ker 9} gives the $L^1$-norm bound in \eqref{eq: low ker b}. Interpolating, via logarithmic convexity, we then obtain \eqref{eq: low ker b} for all $1 \leq r \leq \infty$.   
\end{proof}




\subsection{High frequency estimate} We prove an alternative estimate at an individual frequency bandwidth.

\begin{proposition} \label{prop: high} Let $\bdell \in \N_0^{N - M +1}$ for some $3 \leq M \leq N \leq n$. If $1 \leq p \leq 2$ and $R \geq 2^{D_2(\bdell)}$, then
    \begin{equation}\label{eq: high a}
         \left\| \left(P_R \chi_{(C)}^{\bdell}\right) * f \right\|_{L^{p'}(\R^n)} \lesssim_\gamma C^{O(1)}  \left(1 + R^{-1}2^{D_1(\bdell)}\right)^{2/p'} 2^{-D(\bdell)/p'} \| f \|_{L^p(\R^n)}.
    \end{equation}
\end{proposition}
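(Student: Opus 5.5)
We prove \eqref{eq: high a} by interpolating between an $L^1 \to L^\infty$ bound and an $L^2 \to L^2$ bound. Since $R \geq 2^{D_2(\bdell)}$, Corollary~\ref{cor: kernel est} gives
\[
\|(P_R \chi_{(C)}^{\bdell}) * f\|_{L^\infty} \lesssim_\gamma C^{O(1)} \|f\|_{L^1},
\]
which is the $p = 1$ endpoint; there the right-hand side of \eqref{eq: high a} is $C^{O(1)}$. By Riesz--Thorin it then suffices to prove the $p = 2$ case,
\[
\|\widehat{\psi}(R^{-1}\,\cdot\,)\,\widehat{\chi_{(C)}^{\bdell}}\|_{L^\infty} \lesssim_\gamma C^{O(1)} \big(1 + R^{-1}2^{D_1(\bdell)}\big)\, 2^{-D(\bdell)/2}.
\]
Interpolating the two endpoints produces exactly the exponents $2/p'$ and $1/p'$ in \eqref{eq: high a}.

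For the $L^2$ bound, fix $\xi$ in the support of $\widehat{\psi}(R^{-1}\cdot)$. Since $\widehat{\psi}$ is Schwartz and not compactly supported, we dyadically split $|\xi| \sim 2^m R$. For $m \geq 0$ the factor $|\widehat\psi(R^{-1}\xi)| \lesssim 2^{-Nm}$ absorbs losses. For $|\xi| \ll R$ we use $|\widehat\psi(R^{-1}\xi)| \lesssim |\xi|/R$, coming from the vanishing at the origin, and combine it with the trivial bound $|\widehat{\chi^{\bdell}_{(C)}}| \leq \sum_{\bdnu}\|\chi_{\bdnu}^{\bdell}\|_1$; we return to the low range below. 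The main task is to bound $\sum_{\bdnu \in \fkN(\bdell)} |\widehat{\chi_{\bdnu}^{\bdell}}(\xi)|$ using \eqref{eq: high 2}. Each term is at most $C^{O(1)}2^{-D(\bdell)}\big(1+\sum_j \rho_j|v_j(s_{\bdnu}^{\bdell})\cdot\xi|\big)^{-2}$, so only those $\bdnu$ with $\xi \in \lambda\Theta^\circ(s_{\bdnu}^{\bdell},\bdrho(\bdell))$ contribute significantly for $\lambda \sim 1$. We decompose dyadically in $\lambda = 2^a$, gaining $\lambda^{-2}$ per term.

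The count of contributing centres comes from Lemma~\ref{lem: dual overlap}. Provided $|\xi| \geq \varepsilon^{-1}\lambda\rho_2^{-1} = \varepsilon^{-1}\lambda 2^{D_2(\bdell)}$, which holds for $|\xi|\sim R \geq 2^{D_2(\bdell)}$ up to constants and small $\lambda$, the set of $s$ in an $\varepsilon$-interval with $\xi \in \lambda\Theta^\circ(s,\bdrho)$ lies in an interval of length $O(\lambda 2^{D_2}|\xi|^{-1})$. The centres $s_{\bdnu}^{\bdell}$ form a $c2^{-\|\bdell\|}$-separated net, after splitting into $O(C^{O(1)})$ subfamilies as in the proof of Lemma~\ref{lem: low}. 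The number of contributing $\bdnu$ is therefore
\[
\lesssim C^{O(1)}\big(1 + \lambda\, 2^{D_2(\bdell)+\|\bdell\|}|\xi|^{-1}\big) = C^{O(1)}\big(1+\lambda\, 2^{D_1(\bdell)}|\xi|^{-1}\big).
\]
Summing over $\lambda$ against $\lambda^{-2}$ gives
\[
|\widehat{\chi^{\bdell}_{(C)}}(\xi)| \lesssim C^{O(1)}\, 2^{-D(\bdell)}\big(1 + R^{-1}2^{D_1(\bdell)}\big).
\]
This is even better than the required $2^{-D(\bdell)/2}$, since $D(\bdell)\geq 0$. When $\lambda$ is so large that the hypothesis of Lemma~\ref{lem: dual overlap} fails, we bound the count trivially by $\#\fkN(\bdell)\lesssim 2^{\|\bdell\|}$; the decay $\lambda^{-2}$ with $\lambda \gtrsim \varepsilon|\xi|2^{-D_2}$ gives $2^{\|\bdell\|}(2^{D_2}/|\xi|)^2 \lesssim 2^{D_1}/|\xi|$, again acceptable.

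The main obstacle is the range $|\xi| \lesssim 2^{D_2(\bdell)}$, which occurs inside the Schwartz tail of $\widehat\psi(R^{-1}\cdot)$ when $R$ is near $2^{D_2}$. There we use the trivial bound $2^{\|\bdell\|-D(\bdell)}$ times $|\xi|/R \leq 2^{D_2}/R$. This gives at most $2^{D_1-D(\bdell)}/R \leq (1+R^{-1}2^{D_1})2^{-D(\bdell)}$, so it is harmless. Finally, the $L^\infty$ multiplier bound is converted to an $L^2$ operator bound by Plancherel, and interpolating with the corollary concludes the proof.
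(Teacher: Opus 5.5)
Your proposal is correct and follows essentially the paper's route. You take the $p=1$ endpoint from Corollary~\ref{cor: kernel est}, then interpolate and apply Plancherel. You obtain a pointwise Fourier bound by sorting $\bdnu$ according to the dyadic dilate $2^r\Theta^{\bdell,\circ}_{\bdnu}$ containing $\xi$, and counting centres with Lemma~\ref{lem: dual overlap} (or trivially when its hypothesis fails). This gives $|\widehat{\chi^{\bdell}_{(C)}}(\xi)| \lesssim C^{O(1)}(1+2^{D_1(\bdell)}|\xi|^{-1})2^{-D(\bdell)}$, which you combine with $|\widehat{\psi}(R^{-1}\xi)| \lesssim \min\{1,|\xi|/R\}$.

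One small point concerns the range $2^{D_2(\bdell)} \lesssim |\xi| \ll R$. There you should pair $|\xi|/R$ with this counting bound rather than the trivial bound $2^{\|\bdell\|-D(\bdell)}$. The trivial bound suffices for the factor $2^{-D(\bdell)/p'}$ as printed, since $D(\bdell)\geq 3\|\bdell\|$. However, only the counting bound gives the multiplier estimate $(1+R^{-1}2^{D_1(\bdell)})2^{-D(\bdell)}$, i.e.\ the stronger factor $2^{-2D(\bdell)/p'}$ that the paper's argument actually yields and uses in the subsequent induction.
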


In particular, under the hypotheses of Proposition~\ref{prop: high}, for all $K \in \N_0$, we have 
    \begin{equation*}
         \left\| \left(P_R \chi_{(C)}^{\bdell}\right) * f \right\|_{L^{p'}(\R^n)} \lesssim_\gamma C^{O(1)} 2^{2K/p'} 2^{-2D(\bdell)/p'} \| f \|_{L^p(\R^n)}
    \end{equation*}
for $R \geq \max\{2^{D_1(\bdell) - K}, 2^{D_2(\bdell)}\}$.
\begin{proof}[Proof (of Proposition~\ref{prop: high})] The $p = 1$ case of \eqref{eq: high a} is precisely Corollary~\ref{cor: kernel est}. Thus, by interpolation, it suffices to only prove the $p = 2$ case of \eqref{eq: high a}. Furthermore, by Plancherel's theorem, matters are reduced to establishing
\begin{equation}\label{eq: high 1}
    \left\| {\left(P_R \chi_{(C)}^{\bdell}\right) \widehat{\vphantom{A}} }\;\, \right\|_{L^{\infty}(\widehat{\R}^n)} \lesssim_\gamma  C^{O(1)} \big( 1 + R^{-1}2^{D_1(\bdell)-D(\bdell)} \big).
\end{equation}

     For $\xi \in \widehat{\R}^n$ we define
    \begin{align*}
    \fkN_0(\bdell; \xi) &\coloneq \left\{\bdnu \in \fkN(\bdell): \xi \in \Theta_{\bdnu}^{\bdell, \circ} \right\}; \\
    \fkN_r(\bdell; \xi) &\coloneq \left\{\bdnu \in \fkN(\bdell): \xi \in 2^r \Theta_{\bdnu}^{\bdell, \circ} \setminus 2^{r-1}\Theta_{\bdnu}^{\bdell, \circ} \right\}, \qquad r \in \N. 
    \end{align*}
    It then follows that 
    \begin{equation}\label{eq: high 3}
        \fkN(\bdell) = \bigcup_{r = 0}^{\infty} \fkN_r(\bdell; \xi) \qquad \textrm{for all $\xi \in \widehat{\R}^n$.}
    \end{equation}
    
    By \eqref{eq: high 2}, for each $\xi \in \widehat{\R}^n$ and $r \in \N_0$ we have
     \begin{equation}\label{eq: high 4} 
     \sum_{\bdnu \in \fkN_r(\bdell; \xi)}\left| \left(\chi_{\bdnu}^{\bdell}\right)\widehat{\vphantom{A}}\,\,(\xi) \right|  \lesssim_\gamma C^{O(1)} 2^{-D(\bdell)} 2^{-2r} \#\fkN_r(\bdell; \xi).
    \end{equation}
We further claim that
\begin{equation}\label{eq: high 5}
    \#\fkN_r(\bdell; \xi) \lesssim_{\gamma} C^{O(1)}\big(1 + 2^r 2^{D_1(\bdell)} |\xi|^{-1}\big). 
\end{equation}
Assuming this is the case, we may combine \eqref{eq: high 4} and \eqref{eq: high 5} and sum over $r \in \N_0$ to deduce that
    \begin{align}
    \nonumber
    \left| \left(\chi_{(C)}^{\bdell} \right) \widehat{\vphantom{A}}\,\, (\xi)\right| &\leq \sum_{r = 0}^{\infty} \sum_{\bdnu \in \fkN_r(\bdell; \xi)}\left|\left(\chi_{\bdnu}^{\bdell}\right) \widehat{\vphantom{A}}\,\, (\xi)\right|\\ 
         \label{eq: high 6}
         &\lesssim_\gamma C^{O(1)} \big(1 + 2^{D_1(\bdell)} |\xi|^{-1}\big)2^{-D(\bdell)} \quad \textrm{for all $\xi \in \widehat{\R}^n$,}
    \end{align}
    where the first inequality is a consequence of the triangle inequality and \eqref{eq: high 3}. Since $\widehat{\psi}$ vanishes at the origin, the mean value theorem and \eqref{eq: high 6} yield
        \begin{equation*}
            \left|\left(P_R \chi_{(C)}^{\bdell}\right) \widehat{\vphantom{A}}\,\, (\xi)\right| = |\widehat{\psi}(R^{-1} \xi )|\left|\left(\chi_{(C)}^{\bdell}\right) \widehat{\vphantom{A}}\,\, (\xi)\right| \lesssim_\gamma C^{O(1)} \left(1 +  R^{-1} 2^{D_1(\bdell)}\right)2^{-D(\bdell)} 
        \end{equation*}
for all $\xi \in \widehat{\R}^n$. This is precisely the desired estimate \eqref{eq: high 1}. 

It remains to verify the claim \eqref{eq: high 5}. Let $\varepsilon = \varepsilon_B > 0$ be the constant appearing in Lemma~\ref{lem: dual overlap}, where $B \geq 1$ is the minimal value such that $\gamma \in \fkG(B)$. For $r \in \N_0$, we consider the following cases.\smallskip

\noindent \textit{Case 1: $|\xi| \geq \varepsilon^{-1} 2^r 2^{D_2(\bdell)}$}. Here we may apply Lemma~\ref{lem: dual overlap} to conclude that
    \begin{equation*}
        \left\{ s \in [-1,1] : \xi \in 2^r \Theta^\circ(s, \bdrho(\bdell)) \right\} 
    \end{equation*} 
    is contained in $O_{\gamma}(1)$ intervals, each of length $O_{\gamma}( 2^r \rho_2(\bdell)^{-1} |\xi|^{-1} )$. Note that $\fkN(\bdell)$ is the union of $O_{\gamma}(C^{O(1)})$ sets which are each $2^{-\|\bl\|}$-separated. Since $2^{-\|\bdell\|} = 2^{-D_1(\bdell)} \rho_2(\bdell)^{-1}$, the desired bound \eqref{eq: high 5} therefore follows.\smallskip

    \noindent \textit{Case 2: $|\xi| < \varepsilon^{-1} 2^r 2^{D_2(\bdell)}$}. Rearranging the hypothesis of the case,
    \begin{equation*}
        2^{\|\bdell\|} = 2^{D_1(\bdell) - D_2(\bdell)} < \varepsilon^{-1} 2^r 2^{D_1(\bdell)} |\xi|^{-1}.
    \end{equation*}
    The desired result follows from the trivial bound $\#\fkN_r(\bdell; \xi) \leq \#\fkN(\bdell) \lesssim_{\gamma} C^{O(1)} 2^{\|\bdell\|}$.\smallskip

Thus, we see \eqref{eq: high 5} holds in either case, which concludes the proof.
\end{proof}




\subsection{Proof of Proposition~\ref{prop: chi conv}: Technical preliminaries}

Before concluding the proof of Proposition~\ref{prop: chi conv}, we present some minor technical lemmas which feature in the argument. 

\begin{lemma}\label{lem: D exp} For $1 \leq p < p_n$, define $\delta(p) \coloneq 2(1/p_n' - 1/p') > 0$.  If $\bdell$, $\tbdell \in \N_0^{N- M +1}$ for $1 \leq M \leq N \leq n$ satisfy $\tilde{\ell}_j \leq \ell_j$ for $M \leq j \leq N$, then
    \begin{equation*}
    D_1(\bdell) - D_1(\tbdell) - \frac{2}{p'}\big(D(\bdell) - D(\tbdell) \big) \geq \delta(p)\big(D_1(\bdell) - D_1(\tbdell) \big).
\end{equation*}
\end{lemma}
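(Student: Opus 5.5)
By linearity, I would reduce to the difference vector $\mathbf{m} := \bdell - \tbdell \in \N_0^{N-M+1}$, whose entries $m_j := \ell_j - \tilde{\ell}_j$ are non-negative. The quantities $D_1$ and $D$ are linear in $\bdell$ by \eqref{eq: ej exp} and \eqref{eq: D and E}, with
\begin{equation*}
D_1(\bdell) = \sum_{j=M}^N (j-1)\ell_j, \qquad D(\bdell) = \sum_{j=M}^N \frac{j(j-1)}{2}\ell_j .
\end{equation*}
The formula for $D_1$ holds for all $M \geq 1$, since the lower summation index $\max\{2,M\}$ only drops the $j=1$ term, whose coefficient $j-1$ vanishes anyway. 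Hence the claim is equivalent to
\begin{equation*}
\sum_{j=M}^N m_j\,(j-1)\Big[1 - \delta(p) - \frac{j}{p'}\Big] \geq 0 .
\end{equation*}
Since every $m_j \geq 0$, it suffices to check the bracket termwise. For $j = 1$ the coefficient $j-1$ is zero, so only $2 \le j \le N \le n$ matters.

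The termwise check reduces to the inequality $1 - \delta(p) \geq j/p'$ for all $j \le n$, and the worst case is $j = n$. Substituting $\delta(p) = 2/p_n' - 2/p'$, the required inequality becomes
\begin{equation*}
1 - \frac{2}{p_n'} + \frac{2}{p'} \geq \frac{n}{p'}, \qquad \text{equivalently} \qquad 1 - \frac{2}{p_n'} \geq \frac{n-2}{p'} .
\end{equation*}
From \eqref{eq: U gamma}, $p_n' = n(n+1)/2$, so $1 - 2/p_n' = 1 - 4/(n(n+1))$. Since $p < p_n$, we have $p' > p_n'$, hence $(n-2)/p' < 2(n-2)/(n(n+1))$. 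It therefore suffices to verify
\begin{equation*}
n(n+1) - 4 \geq 2(n-2), \qquad \text{that is,} \qquad n^2 - n \geq 0,
\end{equation*}
which holds trivially.

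I expect no genuine obstacle here; the only points needing care are the $M = 1, 2$ edge cases in the definition of $D_1$ and the sign conventions. If my coefficient bookkeeping turns out to be off, I would instead note that the sharpest required bracket is linear in $1/p'$ and check it directly at the two endpoints $p' = p_n'$ and $p' = \infty$.
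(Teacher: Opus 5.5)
Your proof is correct and is essentially the paper's argument. Both use linearity in $\bdell - \tbdell$ and check each coefficient termwise; the paper simply splits your bracket as $(j-1)\bigl[1-\delta(p)-j/p'\bigr] = (j-1)\bigl(1 - 2j/(n(n+1))\bigr) + \delta(p)(j-1)(j-2)/2$, i.e.\ the endpoint ($p=p_n$) inequality plus $D(\bdell)-D(\tbdell) \geq D_1(\bdell)-D_1(\tbdell)$. One small nit: your inequality $(n-2)/p' < 2(n-2)/(n(n+1))$ should be non-strict, since it is an equality when $n=2$, but this does not affect the conclusion.
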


\begin{proof} It follows from the formula in \eqref{eq: D and E} that
\begin{equation*}
    D(\bdell) - D(\tbdell) = \sum_{j=M}^N \frac{(j-1)j}{2}(\ell_j - \tilde{\ell}_j) \geq \sum_{j=M}^N (j-1)(\ell_j - \tilde{\ell}_j) = D_1(\bdell) - D_1(\tbdell) \geq 0.
\end{equation*}
Furthermore, recalling the definition of $p_n$ from \eqref{eq: U gamma}, we have 
\begin{equation*}
    D_1(\bdell) - D_1(\tbdell) - \frac{2}{p_n'}\big(D(\bdell) - D(\tbdell) \big) = \sum_{j=M}^N (j-1)\left(1 - \frac{2j}{n(n+1)}\right)(\ell_j - \tilde{\ell}_j) \geq 0. 
\end{equation*}
Combining the two previous displays yields the desired inequality. 
\end{proof}

\begin{lemma}\label{lem: Lambda contain}
    Suppose $\bdell = (\ell_M, \dots, \ell_N) \in \N_0^{N - M + 1}$ and $\bdell' = (\tilde{\ell}_{M'}, \ell_{M'+1}, \dots, \ell_N)  \in \N_0^{N - M' + 1}$ for some $1 \leq M \leq M' \leq N \leq n$ and $1 \leq \tilde{\ell}_{M'} \leq \ell_{M'}$. Then
    \begin{equation*}
        \Lambda_{\gamma}(\bdell; C) \subseteq \Lambda_{\gamma}(\bdell'; 2C).
    \end{equation*} 
\end{lemma}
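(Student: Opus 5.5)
The plan is a direct comparison of plates: I would show that every plate $\Theta_{\bdnu}^{\bdell}$ making up $\Lambda_\gamma(\bdell;C)$ is contained in a single plate $\Theta_{\bdnu'}^{\bdell'}$ of $\Lambda_\gamma(\bdell';2C)$. The proof has three steps: compare the principal radii, choose a nearby centre, and then Taylor expand in the centre variable. Throughout I use the modified centres of Definition~\ref{dfn: mod plates}, with the dyadic constant $c \le \kappa_B C^{-2}$.

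\emph{Step 1 (radii).} From \eqref{eq: ej exp}, for $M' \le j \le n$ the sum defining $D_j(\bdell')$ only involves $\ell_i$ with $i > M'$, so $D_j(\bdell') = D_j(\bdell)$. For $j < M'$ we have
\[
D_j(\bdell') = (M'-j)\tilde\ell_{M'} + \sum_{i>M'}(i-j)\ell_i \le D_j(\bdell).
\]
Hence $\rho_j(\bdell) \le \rho_j(\bdell')$ for all $j$, with equality for $j \ge M'$. Since the lower bounds in $\Theta_{M',N}$ are only imposed for $M' \le j \le N$, this is exactly where the radii coincide. I also record the identity $D_t(\bdell') - D_{t+1}(\bdell') = \sum_{m>t}\ell'_m$. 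For every $t$, this quantity is at most $\tilde\ell_{M'} + \sum_{m>M'}\ell_m =: L$.

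\emph{Step 2 (centres).} Given $\bdnu \in \fkN(\bdell)$, I would choose $\bdnu' \in \fkN(\bdell')$ with $\nu'_j = \nu_j$ for $j > M'$ and $\nu'_{M'}$ chosen by rounding. The aim is
\[
\Delta := |s_{\bdnu}^{\bdell} - s_{\bdnu'}^{\bdell'}| \le c\,2^{-L}.
\]
Indeed, the terms of $s_{\bdnu}^{\bdell}$ with index $j \le M'$ are resolved on the coarser grid $c2^{-L}\Z$, and the points $s^{\bdell'}_{\bdnu'}$ form a mixed-radix grid of spacing $c2^{-L}$.

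I expect the only genuinely fiddly point to be the constraint $|\nu'_{M'}| \le c^{-1}2^{\tilde\ell_{M'}}$, together with $|s_{\bdnu'}^{\bdell'}| \le 2$. A naive rounding loses a factor $O(n)$ here. I would handle this by selecting $\bdnu'$ as the nearest admissible grid point, adjusting the higher digits $\nu'_j$ (a carry) if necessary, using that the admissible centres cover the relevant interval at spacing $c2^{-L}$. This step is the main obstacle, although it is purely combinatorial.

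\emph{Step 3 (Taylor expansion).} Fix $x \in \Theta_{\bdnu}^{\bdell}$ and write $s = s_{\bdnu}^{\bdell}$, $s' = s_{\bdnu'}^{\bdell'}$. Taylor's theorem gives
\[
\gamma^{(j)}(s')\cdot x = \sum_{i=j}^{n-1}\frac{\gamma^{(i)}(s)\cdot x}{(i-j)!}(s'-s)^{i-j} + O_B\big(|x|\,\Delta^{n-j}\big).
\]
I then bound each term.
\begin{itemize}
\item For $i > j$, the term $|\gamma^{(i)}(s)\cdot x|\,\Delta^{i-j}$ is at most $C\rho_i(\bdell)\Delta^{i-j}$.
\item Using $\rho_i(\bdell) \le \rho_i(\bdell')$ and Step 1, we have $\rho_i(\bdell')/\rho_j(\bdell') = 2^{\sum_{t=j}^{i-1}(D_t-D_{t+1})(\bdell')} \le 2^{(i-j)L}$. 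Combined with $\Delta \le c2^{-L}$, each such term is at most $C c^{i-j}\rho_j(\bdell') \le \kappa_B\rho_j(\bdell')/C$.
\item The remainder is $O_B(C c\,\rho_j(\bdell'))$, using $|x| \lesssim_B C\rho_n = C$ from \eqref{eq: G class 4}.
\end{itemize}
Choosing $\kappa_B$ small, this gives $|\gamma^{(j)}(s')\cdot x| \le C\rho_j(\bdell) + \rho_j(\bdell')/C^{2} \le 2C\rho_j(\bdell')$ for all $j$. For $M' \le j \le N$, where $\rho_j(\bdell) = \rho_j(\bdell')$, it also gives $|\gamma^{(j)}(s')\cdot x| \ge (2C)^{-1}\rho_j - \kappa_B\rho_j/C \ge (4C)^{-1}\rho_j(\bdell')$. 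Hence $x \in \Theta_{M',N}(s', \bdrho(\bdell'), 2C) = \Theta_{\bdnu'}^{\bdell'}$ with constant $2C$. Taking the union over $\bdnu \in \fkN(\bdell)$ yields $\Lambda_\gamma(\bdell;C) \subseteq \Lambda_\gamma(\bdell';2C)$.
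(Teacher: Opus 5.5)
Your three steps are the paper's proof. You compare radii ($\rho_j(\bdell)\le\rho_j(\bdell')$, with equality for $j\ge M'$), pick a centre $s^{\bdell'}_{\bdnu'}$ within $c2^{-\|\bdell'\|}$ of $s^{\bdell}_{\bdnu}$, and Taylor expand using $\rho_i(\bdell')2^{-(i-j)\|\bdell'\|}\le\rho_j(\bdell')$. There is one slip in Step 3. Your expansion stops at order $n-1-j$, so for $j=n$ the remainder is just $O_B(|x|)$. That yields neither the upper bound $2C\rho_n$ nor, when $N=n$, the lower bound $(4C)^{-1}\rho_n$. Expand one order further, with remainder $O_B(|x|\Delta^{n+1-j})$, as the paper does.

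The substantive issue is Step 2, which the paper also merely asserts. The covering property you invoke fails at the edge of the range when $M<M'$. The digit bounds let $s^{\bdell}_{\bdnu}$ reach $\sum_{j=M}^{N}2^{-\sum_{i>j}\ell_i}$ (when this is at most $2$), while $s^{\bdell'}_{\bdnu'}$ reaches only $\sum_{j=M'}^{N}2^{-\sum_{i>j}\ell_i}$. The shortfall is at least $2^{-\sum_{i\ge M'}\ell_i}$, which equals $2^{-L}\gg c2^{-L}$ if $\tilde\ell_{M'}=\ell_{M'}$. No carry helps, because the extremal $\bdell'$-centre already has every digit saturated.

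With these definitions the inclusion itself fails. Take the following data:
\begin{itemize}
\item the moment curve in $\R^3$;
\item $\bdell=(0,\ell_3)$ with $M=2$, and $\bdell'=(\ell_3)$ with $M'=3$ and $\ell_3\ge1$;
\item the admissible centre $s=1+2^{-\ell_3}$ (every digit maximal);
\item $x$ with $(\gamma'(s)\cdot x,\gamma''(s)\cdot x,\gamma'''(s)\cdot x)=(C2^{-2\ell_3},-C2^{-\ell_3},C)$.
\end{itemize}
Then $x\in\Lambda_\gamma(\bdell;C)$, and every $\bdell'$-centre satisfies $s'\le1$. Since $\gamma'(s')\cdot x$ is exactly quadratic in $u=s'-s\le-2^{-\ell_3}$, we get $\gamma'(s')\cdot x\ge\frac52C2^{-2\ell_3}>2C\rho_1(\bdell')$. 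Hence $x\notin\Lambda_\gamma(\bdell';2C)$.

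What is needed is a harmless change in the bookkeeping of $\fkN(\bdell)$. For example, allow $|\nu_N|\le 2c^{-1}2^{\ell_N}$, so that every centre set is the full grid $c2^{-\|\bdell\|}\Z\cap[-2,2]$. Then nearest-point rounding gives $\Delta\le c2^{-L}/2$, and the rest of your argument goes through unchanged.
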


\begin{proof} Let $B \geq 1$ be the minimum value such that $\gamma \in \fkG(B)$. Let $x \in \Lambda_{\gamma}(\bdell; C)$, so that $x \in \Theta_{M, N}(s_{\bdnu}^{\bdell}, \bdrho(\bdell), C)$ for some $\bdnu \in \fkN(\bdell)$. Note that this implies $|x| \lesssim BC$. Recalling Definition~\ref{dfn: mod plates}, there exists $\bdnu' \in \fkN(\bdell')$ such that $|s_{\bdnu}^{\bdell} - s_{\bdnu'}^{\bdell'}| \leq c 2^{-\|\bdell'\|}$. For $1 \leq i \leq n$, by Taylor's theorem, 
\begin{align}
\nonumber
    \left|\gamma^{(i)}(s_{\bdnu'}^{\bdell'}) \cdot x - \gamma^{(i)}(s_{\bdnu}^{\bdell}) \cdot x\right| &\lesssim_B \sum_{j = i+1}^{n} \frac{|\gamma^{(j)}(s_{\bdnu}^{\bdell}) \cdot x|}{(j-i)!} |s_{\bdnu'}^{\bdell'} - s_{\bdnu}^{\bdell}|^{j-i} + C |s_{\bdnu'}^{\bdell'} - s_{\bdnu}^{\bdell}|^{n+1-i}\\
    \label{eq: Lambda contain 1}
    &\lesssim_B cC\left( \sum_{j = i+1}^n \rho_j(\bdell)2^{- (j-i) \|\bdell'\|} + 2^{-(n+1-i)\|\bdell'\|}\right).
\end{align}
For $i + 1 \leq j \leq n$, observe that
\begin{equation}\label{eq: Lambda contain 2}
   \rho_j(\bdell) 2^{-\|\bdell'\| (j-i)} \leq \rho_j(\bdell') 2^{-(j-i)\|\bdell'\|} \leq \rho_i(\bdell') \quad \textrm{and} \quad  2^{-(n+1-i)\|\bdell'\|} \leq \rho_i(\bdell'). 
\end{equation} 
Recall from Definition~\ref{dfn: mod plates} that $c \sim \kappa_B C^{-2}$. Provided $\kappa_B > 0$ is chosen sufficiently small, depending only on $n$ and $B$, combining \eqref{eq: Lambda contain 2} and \eqref{eq: Lambda contain 1} yields
\begin{equation}\label{eq: Lambda contain 3}
    \left|\gamma^{(i)}(s_{\bdnu'}^{\bdell'}) \cdot x - \gamma^{(i)}(s_{\bdnu}^{\bdell}) \cdot x\right| \leq (4C)^{-1} \rho_i(\bdell) \qquad \textrm{for $1 \leq i \leq n$.}
\end{equation}

On the other hand, $\rho_i(\bdell) = \rho_i(\bdell')$ for $M' \leq i \leq N$ and so
\begin{equation}\label{eq: Lambda contain 4}
   \left|\gamma^{(i)}(s_{\bdnu}^{\bdell}) \cdot x\right| \geq (2C)^{-1} \rho_i(\bdell) = (2C)^{-1} \rho_i(\bdell') \quad \textrm{for $M' \leq i \leq N$.}
\end{equation}
In view of \eqref{eq: Lambda contain 3} and \eqref{eq: Lambda contain 4},  it follows that $x \in \Theta_{M', N}(s_{\bdnu'}^{\bdell'}, \bdrho(\bdell'), 2C)$, and so $x \in \Lambda_{\gamma}(\bdell'; 2C)$, as required. 
\end{proof}




\subsection{Proof of Proposition~\ref{prop: chi conv}: Induction-on-scale}

Proposition~\ref{prop: low} and Proposition~\ref{prop: high} above do not effectively control all bandwidths: there is an intermediate range $2^{D_2(\bdell)} < R < 2^{D_1(\bdell)}$ where the estimates from either proposition are unfavourable. To complete the proof of Proposition~\ref{prop: chi conv}, we use an induction-on-scale mechanism, reminiscent of the \textit{high-low} argument of~\cite{GWZ2020}.

\begin{proof}[Proof (of Proposition~\ref{prop: chi conv})]  For $p = 1$, the result is trivial. Fix $1 < p < p_n$ and let $\bdC_{\gamma} \geq 1$ and $W \geq 1$ be chosen sufficiently large, depending only on $n$, $p$ and $\gamma$, so as to satisfy the forthcoming requirements of the proof. Fixing $3 \leq N \leq n$, our goal is to show 
\begin{equation}\label{eq: induct 1}
    \|\chi_{(C)}^{\bdell} \ast f\|_{L^{p'}(\R^n)} \leq \bdC_{\gamma} C^W 2^{-2D(\bdell)/p'} \|f\|_{L^p(\R^n)} \qquad \textrm{for all $\bdell \in \N_0^{N-M+1}$.}
\end{equation}
We shall prove \eqref{eq: induct 1} by induction on $\|\bdell\|$. \medskip

\noindent \textbf{Base case.} Given $\bdell \in \N_0^{N-M+1}$ for some $3 \leq M \leq N$, observe that
    \begin{equation*}
        \| \chi_{(C)}^{\bdell} \|_{L^{p'/2}(\R^n)} \leq \# \fkN(\bdell) \sup_{\nu \in \fkN(\bdell)} \| \chi_{\bdnu}^{\bdell} \|_{L^{p'/2}(\R^n)} \lesssim_\gamma C^{O(1)} 2^{\|\bl\| - 2D(\bdell)/p'}.
    \end{equation*}
Provided $\bdC_{\gamma} \geq 1$ and $W \geq 1$ are chosen sufficiently large, this, together with Young's convolution inequality, implies that \eqref{eq: induct 1} holds for $\bdell = \boldsymbol{0}$.\medskip

\noindent \textbf{Inductive step.} Fix $\bdell \in \N_0^{N-M+1} \setminus\{\boldsymbol{0}\}$ for some $3 \leq M \leq N$ and assume, as an induction hypothesis, that for all $3 \leq M' \leq N$ the inequality
\begin{equation}\label{eq: induct 2}
    \|\chi_{(C)}^{\bdell'} \ast f\|_{L^{p'}(\R^n)} \leq \bdC_{\gamma}C^W 2^{-2D(\bdell')/p'} \|f\|_{L^p(\R^n)} 
\end{equation}
holds for all $\bdell' \in \N_0^{N-M'+1}$ with $\|\bdell'\| < \|\bdell\|$. Our goal is now to show \eqref{eq: induct 1} holds for our fixed index $\bdell$.

Let $K \in \N$ be a parameter, which we shall later choose to be large, depending only on $n$, $\gamma$ and $p$. Note that $\max\{2/p - 1, 2/p'\} \leq 1$. By Proposition~\ref{prop: low} and Proposition~\ref{prop: high}, there exists some $W_1 >0$ such that
    \begin{equation} \label{eq: induct 3}
        \left\| \left(P_R \chi_{(C)}^{\bdell}\right) * f \right\|_{L^{p'}(\R^n)}
        \lesssim_{\gamma} C^{W_1} 2^K 2^{-2D(\bdell)/p'}  \| f \|_{L^p(\R^n)}
    \end{equation}
holds if either $0 < R \leq \min\{2^{D_2(\bdell) + K}, 2^{D_1(\bdell)}\}$ or $R \geq \max\{2^{D_1(\bdell) - K}, 2^{D_2(\bdell)}\}$. Thus, it remains to consider the intermediate range $2^{D_2(\bdell) + K} < R < 2^{D_1(\bdell) - K}$.\footnote{This range could well be empty, in which case we immediately pass to \eqref{eq: induct 9} below.} For this, we appeal to the induction hypothesis.

Fix $2^{D_2(\bdell) + K} < R < 2^{D_1(\bdell) - K}$. Consider the set
\begin{equation*}
   \cI(\bdell) \coloneq \left\{ \tbdell \in \N_0^{N - M + 1} : \tilde{\ell}_j \leq \ell_j \textrm{ for $M \leq j \leq N$} \right\}. 
\end{equation*}
We pick the largest element $\tbdell \in \cI(\bdell)$ satisfying $R  \geq 2^{D_1(\tbdell)}$, with respect to lexicographical order. It then follows that there exists some $M \leq M' \leq N$ and $0 \leq \tilde{\ell}_{M'} \leq \ell_{M'}$ such that $\tbdell = (0, \dots, 0, \tilde{\ell}_{M'}, \ell_{M'+1}, \dots, \ell_N)$. Furthermore, $D_1(\tbdell) < D_1(\bdell) - K$ and so
\begin{equation}\label{eq: induct 4}
    D_1(\bdell) - D_1(\tbdell) > K.
\end{equation}

Write $\bdell' \coloneq (\tilde{\ell}_{M'}, \ell_{M'+1}, \dots, \ell_N)$. By Lemma~\ref{lem: Lambda contain}, we have $\Lambda_{\gamma}(\bdell; C) \subseteq \Lambda_{\gamma}(\bdell'; 2C)$. Provided $2^K \geq 8BC$ where $B = O_{\gamma}(1)$ is chosen minimally so that $\gamma \in \fkG(B)$, we have 
\begin{equation}\label{eq: induct 5}
\rho \coloneq R^{-1} \leq 2^{-K} 2^{-D_2(\bdell)} \leq (8BC)^{-1} 2^{-D_j(\bdell')}  \quad \textrm{for $M' \leq j \leq N$.}
\end{equation}
Furthermore, $\rho \leq 2^{-D_1(\tbdell)} = 2^{D_1(\bdell')} \leq 2^{-D_j(\bdell')}$ for $1 \leq j \leq n$. It follows that 
\begin{equation*}
    \supp{} P_R \chi^{\bdell} \subseteq N(\Lambda_{\gamma}(\bdell'; 2C), \rho) \subseteq \Lambda_{\gamma}(\bdell'; C') \qquad \textrm{for $C' = O_{\gamma}(C)$.}
\end{equation*}
 Since $R > 2^{D_2(\bdell)}$, recalling the definition in \eqref{eq: M}, we have
\begin{equation*}
    M(\infty; \bdell; R)  = ( 1 + R^{-1} 2^{D_{M-1}(\bdell)}) R 2^{-D_1(\bdell)} \lesssim  R 2^{-D_1(\bdell)},
\end{equation*}
where we have used the condition $M \geq 3$ to bound $2^{D_{M-1}(\bdell)} \leq 2^{D_2(\bdell)} \leq R$. Combining this with Lemma~\ref{lem: low}, there exists some $W_2 > 0$ depending only on $n$ such that
    \begin{equation*}
        |P_R \chi_{(C)}^{\bdell}(x)| \lesssim_\gamma C^{W_2} R 2^{-D_1(\bdell)} \bdone_{\Lambda_{\gamma}(\bdell'; C')}(x).
    \end{equation*} 
    By the maximality of $\tbdell$, we have $R \lesssim 2^{D_1(\tbdell)} = 2^{D_1(\bdell')}$. Thus,
    \begin{equation}\label{eq: induct 6}
        \| P_R \chi_{(C)}^{\bdell} * f \|_{L^{p'}(\R^n)} \lesssim_{\gamma}  C^{W_2} 2^{-(D_1(\bdell) - D_1(\bdell'))} \| \bdone_{\Lambda_{\gamma}(\bdell'; C')} * |f| \|_{L^{p'}(\R^n)}.
    \end{equation}
    Since $\|\bdell'\| < \|\bdell\|$, we may apply our induction hypothesis \eqref{eq: induct 2} to deduce that
    \begin{equation}\label{eq: induct 7}
       \| \chi_{(C')}^{\bdell'} * |f| \|_{L^{p'}(\R^n)} \leq  \bdC_{\gamma}(C')^W 2^{-2D(\bdell')/p'} \|f\|_{L^p(\R^n)}.
    \end{equation}
    Combining \eqref{eq: induct 6} and \eqref{eq: induct 7} with the pointwise bound $\bdone_{\Lambda_{\gamma}(\bdell'; C')} \leq \chi_{(C')}^{\bdell'}$, we obtain
    \begin{equation*}
        \| P_R \chi_{(C)}^{\bdell} * f \|_{L^{p'}(\R^n)} \lesssim_{\gamma}  C^{W_2} \bdC_{\gamma} (C')^W  2^{-(D_1(\bdell) - D_1(\bdell'))}  2^{-2D(\bdell')/p'} \|f\|_{L^p(\R^n)}.
    \end{equation*}
    Note that $D_j(\bdell') = D_j(\tbdell)$ for $1 \leq j \leq n$. By Lemma~\ref{lem: D exp} and \eqref{eq: induct 4}, we have
    \begin{equation*}
     2^{-(D_1(\bdell) - D_1(\bdell'))}  2^{-2D(\bdell')/p'} =   2^{-(D_1(\bdell) - D_1(\tbdell))}  2^{-2D(\tbdell)/p'} \leq 2^{-\delta(p)K} 2^{-2D(\bdell)/p'}
    \end{equation*}
    and so
    \begin{equation}\label{eq: induct 8}
        \| P_R \chi_{(C)}^{\bdell} * f \|_{L^{p'}(\R^n)} \lesssim_{\gamma}  C^{W_2}  2^{-\delta(p)K} \bdC_{\gamma} (C')^W 2^{-2D(\bdell)/p'} \|f\|_{L^p(\R^n)}.
    \end{equation}

Since $2 \leq p' < \infty$, the Littlewood--Paley formula \eqref{eq: LP 1} holds in the sense of $L^{p'}$ convergence. This, together with Minkowski's inequality, gives
\begin{equation*}
    \left\|\chi_{(C)}^{\bdell} \ast f \right\|_{L^{p'}(\R^n)} \leq \int_1^2\left\| \sum_{k \in \Z} \left(P_{t2^k}\chi^{\bdell}\right) \ast \big( P_{t2^k}f \big) \right\|_{L^{p'}(\R^n)}\frac{\ud t}{t}.
\end{equation*}
We may therefore use the classical Littlewood--Paley inequality and a second application of Minkowski's inequality (again using $p' \geq 2$) to deduce that
\begin{equation}\label{eq: induct 9}
    \left\|\chi_{(C)}^{\bdell} \ast f \right\|_{L^{p'}(\R^n)} \lesssim_p  \int_1^2\left(\sum_{k \in \Z}\left\|  \left(P_{t2^k}\chi^{\bdell}\right) \ast \big( P_{t2^k}f \big)\right\|_{L^{p'}(\R^n)}^2\right)^{1/2}\frac{\ud t}{t}.
\end{equation}
Similarly, since $1 < p \leq 2$, for all $1 \leq t \leq 2$ we have
\begin{equation}\label{eq: induct 10}
   \left(\sum_{k \in \Z}\left\| P_{t2^k}f\right\|_{L^{p}(\R^n)}^2\right)^{1/2} \leq \left\| \left(\sum_{k \in \Z} |P_{t2^k}f|^2 \right)^{1/2}\right\|_{L^{p}(\R^n)} \lesssim \|f\|_{L^p(\R^n)}.
\end{equation}
 Comparing the constants in \eqref{eq: induct 3} and \eqref{eq: induct 8}, let
\begin{equation*}
    M_{\gamma, p} \coloneq \max\left\{2^K C^{W_1 - W}\bdC_{\gamma}^{-1}, 2^{-\delta(p)K} (C'/C)^W C^{W_2} \right\}. 
\end{equation*}
Thus, combining \eqref{eq: induct 9} and \eqref{eq: induct 10} with our frequency localised estimates,
\begin{equation}\label{eq: induct 11}
  \|\chi_{(C)}^{\bdell} \ast f \|_{L^{p'}(\R^n)} \leq C_{\gamma, p}  M_{\gamma, p} \bdC_{\gamma}C^W 2^{-2D(\bdell)/p'} \|f\|_{L^p(\R^n)},
\end{equation}
where here $C_{\gamma, p} \geq 1$ is a constant depending only on $n$, $\gamma$  and $p$, which arises as an amalgamation of the various implied constants featured above. 

We are free to choose $K$ large, depending on $p$, $n$, $\gamma$ and $C$, so as to ensure both \eqref{eq: induct 5} and
\begin{equation*}
    1/2 \leq C_{\gamma, p}2^{-\delta(p)K} C^{W_2}(C'/C)^W \leq 1
\end{equation*}
hold. We now choose $W \coloneq W_1 + W_2/\delta(p)$ so that
\begin{equation*}
    2^K C^{W_1 - W}\bdC_{\gamma}^{-1} \lesssim_{\gamma} (C'/C)^{W/\delta(p)}\bdC_{\gamma}^{-1}.
\end{equation*}
Finally, since $C'/C = O_{\gamma}(1)$, we are free to choose $\bdC_{\gamma} \geq 1$ large, depending only on $n$, $p$ and $\gamma$, so as to ensure 
 \begin{equation*}
    C_{\gamma, p}2^KC^{W_1 - W}\bdC_{\gamma}^{-1} \leq 1. 
 \end{equation*}
With these choices, $C_{\gamma, p} M_{\gamma, p}  \leq 1$ and \eqref{eq: induct 11} implies that \eqref{eq: induct 1} holds for our choice of $\bdell$. This closes the induction and concludes the proof. 
\end{proof}




\subsection{Proof of Proposition~\ref{prop: Lambda conv}} Here we derive Proposition~\ref{prop: Lambda conv} as a consequence of Proposition~\ref{prop: chi conv}. 

\begin{proof}[Proof (of Proposition~\ref{prop: Lambda conv})] Let $\bdell = (\ell_M, \dots, \ell_N) \in \N_0^{N - M + 1}$ for $1 \leq M \leq N \leq n$. Recall from Definition~\ref{dfn: geometric kernel} that $\bdone_{\Lambda_{\gamma}(\bdell; C)} \leq \chi^{\bdell}_{(C)}$. Furthermore, if $M \geq 3$, then recall from \eqref{eq: D and E} that $D(\bdell) = E(\bdell)$. Thus, in the case $M \geq 3$, it immediately follows from positivity and Proposition~\ref{prop: chi conv} that
\begin{equation*}
        \|\bdone_{\Lambda_{\gamma}(\bdell; C)} * f \|_{L^{p'}(\R^n)} \leq \|\chi_{(C)}^{\bdell} * |f| \|_{L^{p'}(\R^n)} \lesssim_{\gamma, C} 2^{-2E(\bdell)/p'} \| f \|_{L^p(\R^n)}.
\end{equation*}

It remains to establish the case $M \in \{1,2\}$, which is a consequence of the above result. Assuming $M \in \{ 1, 2 \}$, define $\bdell' = (\ell_3, \dots, \ell_N) \in \N_0^{N - 3 + 1}$. By Lemma~\ref{lem: Lambda contain}, we have $\Lambda_{\gamma}(\bdell; C) \subseteq \Lambda_{\gamma}(\bdell'; 2C)$. Using positivity and applying the previous case of the proposition,
\begin{equation*}
    \|\bdone_{\Lambda_{\gamma}(\bdell; C)} * f \|_{L^{p'}(\R^n)} \leq \|\bdone_{\Lambda_{\gamma}(\bdell'; 2C)} * |f| \|_{L^{p'}(\R^n)}\lesssim_{\gamma, C} 2^{-2E(\bdell')/p'} \| f \|_{L^p(\R^n)}.
\end{equation*}
Recalling \eqref{eq: D and E}, we see that $E(\bdell') = E(\bdell)$, concluding the proof.
\end{proof}




\section{Proof of \texorpdfstring{$L^p$-improving}{Lp improving} IV: Conclusion}\label{sec: Lp improv IV}

We now draw together results from \S\S\ref{sec: Lp improv I}--\ref{sec: Lp improv III} to conclude the proof of Theorem~\ref{thm: symbolconvolutionbound}. For the reader's convenience, we reproduce the statement of this theorem below.

\symbolconvolutionbound*

Recall, in \S\ref{subsec: duality and interpolation}, we showed that Theorem~\ref{thm: symbolconvolutionbound} implies Theorem~\ref{thm: main}. 

\begin{proof}[Proof (of Theorem~\ref{thm: symbolconvolutionbound})] Suppose $a \in \cS(A)$ and $\gamma \in \fkG(B)$ for some $A$, $B \geq 1$. We break the argument into steps.\medskip

\noindent \textit{Step 1: Symbol decomposition}. We apply Lemma~\ref{lem: initial decomp} to write $a = \sum_{k=0}^{\infty} a^k$ where 
\begin{enumerate}[\normalfont(a)]
    \item $a^0 \in C_t^{\infty}L_x^{\infty}(\R \times \R^n)$ satisfies
    \begin{equation*}
        \|\cK_{\gamma}[a^0]\|_{L^{\infty}(\R^n)} \lesssim_A 1 \qquad \textrm{and} \qquad \supp{} a^0 \subseteq [-1,1] \times \{x \in \R^n : |x| \leq 2\};
    \end{equation*}
    \item $a^k \in \cS_{\gamma}(O(A); n; 2^k)$ for all $k \in \N$. 
\end{enumerate}
By Proposition~\ref{prop: fine decomp}, there exists some $C \geq 1$ and $K \in \N$ with $C = O_{\gamma, A}(1)$ and $K = O_{\gamma, A}(1)$ such that the following holds. For $\cL(k)$ as defined in \eqref{eq: indicies}, we have
\begin{equation*}
    \cK_{\gamma}[a] = \sum_{k=0}^{K-1} \cK_{\gamma}[a^k] + \sum_{k = K}^{\infty} \sum_{\bdell \in \cL(k)} \cK_{\gamma}[a^{k, \bdell}]
\end{equation*}
where, for $k \geq K$ and $\bdell \in \cL(k)$, the symbols $a^{k, \bdell}$ satisfy\footnote{Here we may revert back to the original definition of the $\Lambda_{\gamma}(k; \bdell; C)$ sets as in Definition~\ref{dfn: plate family}.} 
\begin{enumerate}[\normalfont(i)]
    \item Kernel estimate: $\|\cK_{\gamma}[a^{k, \bdell}]\|_{L^{\infty}(\R^n)} \lesssim_{\gamma, A} 2^{-\|\bdell\|}$;\vspace{2pt}
 \item Support condition: $ \supp{} \cK_{\gamma}[a^{k, \bdell}] \subseteq \Lambda_{\gamma}(k; \bdell; C)$.
\end{enumerate}

Since the supports of the symbols $a^k$ are finitely-overlapping for $k \in \N_0$, by the Cauchy--Schwarz inequality,
\begin{equation}\label{eq: concluding 1}
    |\cK_{\gamma}[a]|^2 \lesssim_{\gamma} \sum_{k=0}^{K-1} |\cK_{\gamma}[a^k]|^2 + \sum_{k = K}^{\infty} k^n \sum_{\bdell \in \cL(k)} |\cK_{\gamma}[a^{k, \bdell}]|^2,
\end{equation}
where we have also noted that $\#  \cL(k) \lesssim k^n$ for each $k \geq K$.\medskip

\noindent \textit{Step 2: Domination by geometric averaging operators}. The kernel estimate and support conditions for $\cK_{\gamma}[a^{k, \bdell}]$, together with \eqref{eq: concluding 1}, imply
\begin{equation} \label{eq: concluding 2}
    |T_{\gamma}[a]f| \lesssim_{\gamma, A} \sum_{k = 0}^{K-1} T_{\gamma}[a^k] |f| + \sum_{k = K}^{\infty} k^n \sum_{\bdell \in \cL(k)} 2^{-2 \|\bl\|} \, \bdone_{\Lambda_{\gamma}(k; \bdell; C)} * |f|.
\end{equation}
We individually estimate the operators corresponding to the terms of this sum. \medskip

\noindent \textit{Step 3: Estimating the geometric averaging operators}. For each $1 \leq k \leq K-1$, the kernel $\cK_{\gamma}[a^k]$ satisfies $\| \cK_{\gamma}[a^k] \|_{L^\infty(\R^n)} \lesssim_A 1$ and is supported in a ball of radius $O_{\gamma, A}(1)$. Consequently, we may trivially estimate
\begin{equation}\label{eq: concluding 3}
    \|T_{\gamma}[a^k]|f|\|_{L^{p'}(\R^n)} \lesssim_{\gamma, A} \|f\|_{L^p(\R^n)} \qquad \textrm{for $0 \leq k \leq K-1$.}
\end{equation}
In remains to analyse the terms in \eqref{eq: concluding 2} corresponding to $k \geq K$.

If $\bdell \in \N_0^{n-M+1}$ for $1 \leq M \leq n$, then Proposition \ref{prop: Lambda conv} implies that
    \begin{equation}\label{eq: concluding 4}
        \| \bdone_{\Lambda_{\gamma}(\bdell; C)} * f \|_{L^{p'}(\R^n)} \lesssim_{\gamma} 2^{- 2 E(\bdell)/p'} \| f \|_{L^{p}(\R^n)}.
    \end{equation} 
    Let $\bdell \in \cL(k)$ and note that
    \begin{equation*}
      \bdone_{\Lambda_{\gamma}(k; \bdell; C)} * f(x) = 2^{nk}\, ( \bdone_{\Lambda_{\gamma}(\bdell; C)} * f_{-k} )(2^{-k}x) \quad \textrm{where} \quad f_{-k}(y) \coloneq f(2^ky).
    \end{equation*}
    Applying this to \eqref{eq: concluding 4} together with a change of variables,
    \begin{equation} \label{eq: concluding 5}
        \| \bdone_{\Lambda_{\gamma}(k; \bdell; C)} * f \|_{L^{p'}(\R^n)} \lesssim_{\gamma} 2^{2kn/p'} 2^{-2 E(\bdell)/p'} \| f \|_{L^p(\R^n)}.
    \end{equation}

    Given $\bdell \in \cL(k)$, we claim that 
    \begin{equation}\label{eq: concluding 6}
        \frac{2(kn - E(\bl))}{p'} \leq 2 \|\bl\| - \delta(p) k + O(1) \quad \textrm{for} \quad \delta(p) \coloneq \frac{2}{p_n'} - \frac{2}{p'} > 0.
    \end{equation} 
    Assuming this is the case, it then follows from \eqref{eq: concluding 5} that
    \begin{equation} \label{eq: concluding 7}
        \| \bdone_{\Lambda_{\gamma}(k; \bdell; C)} * f \|_{L^{p'}(\R^n)} \lesssim_{\gamma, A} 2^{-k \delta(p)} 2^{2 \|\bl\|} \| f \|_{L^{p}(\R^n)}
    \end{equation}
    Taking $L^p$-norms of both sides of \eqref{eq: concluding 2} and apply \eqref{eq: concluding 3} and \eqref{eq: concluding 7}, we conclude that
    \begin{align*}
        \| T_{\gamma}[a] f \|_{L^{p'}(\R^n)} &\lesssim_{\gamma, A} \|f\|_{L^p(\R^n)} +  \sum_{k = K}^\infty k^n \sum_{\bdell \in \cL(k)} 2^{-2 \|\bl\|} \| \bdone_{\Lambda_{\gamma}(k; \bdell; C)} * f \|_{L^{p'}(\R^n)}\\
        &\lesssim \left( 1 + \sum_{k = K}^\infty k^n 2^{- \delta(p) k} \right) \| f \|_{L^{p}(\R^n)} \lesssim_p \| f \|_{L^{p}(\R^n)},
    \end{align*}
which is the desired estimate.\medskip

\noindent \textit{Step 4: Bounding the exponents.} It remains to prove \eqref{eq: concluding 6}. For this, we write 
\begin{equation}\label{eq: concluding 8}
    \frac{2(kn - E(\bl))}{p'} = \frac{2(kn - E(\bl))}{p_n'} - \delta(p)(kn - E(\bl))
\end{equation}
and estimate both terms on the right-hand side individually. 

Given $1 \leq M \leq n$ and $\bdell = (\ell_M, \dots, \ell_n) \in \cL(M, n; k)$, by Definition~\ref{dfn: indices 2} () we have 
\begin{equation}\label{eq: concluding 9}
k - \sum_{j=M+1}^N j \ell_j \leq M\ell_M = ML_M(N; k, \ell_{M+1}, \dots, \ell_N) <
k - \sum_{j=M+1}^N j \ell_j + M,   
\end{equation}
where the $M$ term on the right-hand side corresponds to a potential `rounding error' from the ceiling function. Recalling the definition \eqref{eq: D and E} and applying the first inequality in \eqref{eq: concluding 9} to bound $k$, we have
  \begin{equation*}
        kn - E(\bl) \leq \sum_{j=M}^2 n j \ell_j + \sum_{j = \max\{M,3\}}^n \frac{j(2n-j+1)}{2} \ell_j \leq \frac{n(n+1)}{2}\|\bdell\| = p_n'\|\bdell\|;
    \end{equation*}
here the second inequality follows by noting that $n < 2n \leq n(n+1)/2$, since $n \geq 3$, and that $x(2n-x+1)$ is increasing on the domain $3 \leq x \leq n$. Consequently, 
\begin{equation}\label{eq: concluding 10}
    \frac{2(kn - E(\bl))}{p_n'} \leq 2 \|\bdell\|.
\end{equation}

On the other hand, by the second inequality in \eqref{eq: concluding 9}, we have
\begin{equation*}
    E(\bdell) = \sum_{j=\max\{3, M\}}^n \frac{j(j-1)}{2} \ell_j \leq \frac{n-1}{2}\sum_{j=M}^n j \ell_j < (n-1)(k + M). 
\end{equation*}
Thus, $kn - E(\bdell) > k - (n-1)M$ and so 
\begin{equation}\label{eq: concluding 11}
    \delta(p)(kn - E(\bdell)) > k\delta(p) - O(1).
\end{equation}

Substituting \eqref{eq: concluding 10} and \eqref{eq: concluding 11} into \eqref{eq: concluding 8} gives \eqref{eq: concluding 6}, as required.
\end{proof}




\section{Necessary conditions} \label{sec: nec}

In this section, we shall prove the following proposition, which establishes necessary conditions for orthonormal restriction bounds for curves.

\begin{proposition}\label{prop: nec cond}
    Suppose $2 \leq q \leq \infty$ and $1 \leq r \leq \infty$, and that the restriction estimate 
    \begin{equation} \label{eq: nec cond}
        \left\| \sum_{j \in J} |\cE_{\gamma} g_j|^2 \right\|_{L^{q/2}(\R^n)} \lesssim_\gamma \big(\#J\big)^{1/r}
    \end{equation}
    holds whenever $(g_j)_{j \in J}$ are orthonormal functions in $L^2(\R)$ indexed by a finite set~$J$. Then $q \geq n(n+1)$ and $r \leq q/2n$.
\end{proposition}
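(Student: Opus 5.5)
We prove the two necessary conditions separately by testing \eqref{eq: nec cond} against explicit orthonormal families.

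\emph{The condition $q \geq n(n+1)$.} Take $J$ to be a singleton. Then \eqref{eq: nec cond} becomes the classical bound $\|\cE_\gamma g\|_{L^q(\R^n)} \lesssim \|g\|_{L^2(\R)}$. Test it with a Knapp-type example $g = \delta^{-1/2}\bdone_{[s_0, s_0+\delta]}$, where $s_0$ is a point with $\alpha(s_0) > 0$ and $\delta$ is small. Taylor expansion of $\gamma$ about $s_0$ shows that the phase $x \cdot (\gamma(t) - \gamma(s_0))$ is $O(1/100)$ for $t$ in the interval, provided
\[
|\gamma^{(j)}(s_0)\cdot x| \leq c\,\delta^{-j}, \qquad 1 \leq j \leq n.
\]
This set is a parallelepiped $P_\delta$ of volume $\sim \delta^{-n(n+1)/2}$, by non-degeneracy. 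On $P_\delta$ there is no cancellation, so $|\cE_\gamma g| \gtrsim \delta^{1/2}$. Hence
\[
\delta^{1/2}\,\delta^{-n(n+1)/(2q)} \lesssim 1 \quad \text{for all small } \delta,
\]
which forces $q \geq n(n+1)$.

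\emph{The condition $r \leq q/2n$.} Partition the support of $\alpha$, near a point where $\alpha \gtrsim 1$, into $\sim \delta^{-1}$ intervals $I_j$ of length $\delta$, and set $g_j \coloneq \delta^{-1/2}\bdone_{I_j}$. These functions have disjoint supports, so they are orthonormal, and $\#J \sim \delta^{-1}$. The key observation is that every $|\cE_\gamma g_j|$ is large on a common set: the ball $B$ of radius $c\,\delta^{-1}$ about the origin. Indeed, for $|x| \leq c\delta^{-1}$ and $t \in I_j$, the phase $x\cdot(\gamma(t) - \gamma(t_j))$ is at most $c B$ in absolute value, so $|\cE_\gamma g_j(x)| \gtrsim \delta^{1/2}$. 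Therefore
\[
\sum_j |\cE_\gamma g_j|^2 \gtrsim \delta^{-1}\cdot\delta = 1 \quad \text{on } B,
\]
and the left-hand side of \eqref{eq: nec cond} is at least $|B|^{2/q} \sim \delta^{-2n/q}$. The right-hand side is $(\#J)^{1/r} \sim \delta^{-1/r}$. Letting $\delta \to 0$ gives $2n/q \leq 1/r$, that is, $r \leq q/2n$.

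Both arguments are routine. The only care needed is the uniform lower bound on $|\cE_\gamma g|$, which we get by choosing the constant $c$ small so that the phases stay within $\pi/4$ of a constant. The real point is identifying the right examples: the anisotropic Knapp plate for the first condition, and the common isotropic ball for the second. The case $q = \infty$ is handled in the same way.
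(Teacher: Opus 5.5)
Your proof is correct and uses the same two test configurations as the paper. Your Knapp parallelepiped $P_\delta$ and your common ball $B(0,c\delta^{-1})$ are exactly the paper's single $\delta$-plate and its bush of plates through the origin after the rescaling $R=\delta^{-n/(n-1)}$, with your interval length playing the role of $R^{-1/n}$. The only difference is packaging: the paper first converts orthonormal restriction into a Kakeya-type multiplicity bound for direction-separated plate families (Lemma~\ref{lem: rest vs Kakeya}, which it also uses in the companion paper) and then tests that bound, whereas you test \eqref{eq: nec cond} directly.
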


This result shows that Theorem~\ref{thm: main} is sharp up to the endpoint for $n = 3$, but admits the possibility of improved estimates for $n \geq 4$. To prove the proposition, we show orthonormal restriction estimates imply certain Kakeya multiplicity bounds. In a companion paper \cite{CDH}, we study such Kakeya bounds, and thereby provide further evidence for Conjecture~\ref{conj: CDH}.

Fix a smooth, non-degenerate curve $\gamma \colon [-2,2] \to \R^n$. For $\delta ~\in~(0,1)$, a \textit{$\delta$-plate associated to $\gamma$} is a set of the form
\begin{equation*}
    T = \big\{ x \in \R^n : \big|(x-x_T) \cdot \gamma^{(r)}(s_T)\big| < \delta^{(n-r)/(n-1)} \textrm{ for $1 \leq r \leq n$} \big\}
\end{equation*}
for some $x_T \in \R^n$ and $s_T \in [-1/2,1/2]$. We refer to $x_T$ as the \textit{centre} $T$ and $s_T$ as the \textit{direction} of $T$. If $\T$ is a family of $\delta$-plates associated to $\gamma$, we say $\T$ is \textit{direction-separated} if
\[
|s_{T_1} - s_{T_2}| \geq \delta^{1/(n-1)}\quad \mbox{whenever } T_1, T_2 \in \T \mbox{ are distinct}.
\]
Note that in this case, we must have $\#\T \lesssim \delta^{-1/(n-1)}$. 

\begin{lemma}\label{lem: rest vs Kakeya}
    Let $2 \leq q \leq \infty$ and $1 \leq r \leq \infty$. Suppose the restriction estimate 
    \begin{equation}\label{eq: ON rest}
        \left\| \sum_{j \in J}|\cE_{\gamma} g_j|^2 \right\|_{L^{q/2}(\R^n)} \lesssim_\gamma \big(\#J\big)^{1/r}
    \end{equation}
    holds whenever $(g_j)_{j \in J}$ are orthonormal functions in $L^2(\R)$. Define
    \begin{equation*}
        s \coloneq \frac{q}{2} \qquad \textrm{and} \qquad e(n, s) \coloneq \frac{n^2}{n-1} \cdot \frac{1}{s} - \frac{1}{n-1}. 
    \end{equation*}
    Then for all $\delta\in(0,1)$ the multiplicity bound
    \begin{equation}\label{eq: multiplicity I}
        \left\| \sum_{T \in \T} \bdone_T \right\|_{L^s(\R^n)} \lesssim_\gamma \delta^{e(n, s)} (\# \T)^{1/r}
    \end{equation}
    holds whenever $\T$ is a direction-separated family of $\delta$-plates associated to $\gamma$. 
\end{lemma}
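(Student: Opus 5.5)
We construct, for each plate $T \in \T$, a normalised function $g_T \in L^2(\R)$ whose extension $\cE_\gamma g_T$ is essentially a wave packet concentrated on a rescaled copy of $T$, and then feed these into \eqref{eq: ON rest}. Because the plates are direction-separated, the $g_T$ will have disjoint supports, so they are automatically orthonormal.

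\textbf{Step 1 (the wave packets).} Set $\sigma \coloneq \delta^{1/(n-1)}$. For $T$ with direction $s_T$ and centre $x_T$, we define
\begin{equation*}
    g_T(t) \coloneq \sigma^{-1/2}\, \phi\big(\sigma^{-1}(t - s_T)\big)\, e^{-2\pi i\, \lambda x_T\cdot \gamma(t)},
\end{equation*}
where $\phi$ is a fixed bump supported in $[-1/4,1/4]$ and $\lambda \coloneq \sigma^{-(n-1)}=\delta^{-1}$ is a scaling parameter. For distinct $T_1,T_2$ we have $|s_{T_1}-s_{T_2}|\ge\sigma$, so the supports of $g_{T_1}$ and $g_{T_2}$ are disjoint and $\{g_T\}$ is orthonormal. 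Since $\alpha\gtrsim1$ near the relevant $s$ (we may assume this after translating the parameter interval), $\cE_\gamma g_T(x)$ is an oscillatory integral over an interval of length $\sigma$ around $s_T$. Taylor expanding $(x-\lambda x_T)\cdot(\gamma(t)-\gamma(s_T))$ gives $\sum_r \frac{(t-s_T)^r}{r!}\,\gamma^{(r)}(s_T)\cdot(x-\lambda x_T)$. This phase is $O(1)$, hence there is no cancellation and $|\cE_\gamma g_T(x)|\gtrsim \sigma^{1/2}$, whenever $|\gamma^{(r)}(s_T)\cdot(x-\lambda x_T)|\le c\,\sigma^{-r}$ for all $1\le r\le n$. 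To see this, take $\phi\ge0$ and a small constant $c$; the remainder term is controlled by $|x-\lambda x_T|\sigma^{n+1}\lesssim \sigma^{-n}\sigma^{n+1}$, which is small.

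The set $\{|\gamma^{(r)}(s_T)\cdot (y)|\le c\sigma^{-r}\}$ equals $\lambda\cdot c'$ times the plate shape, because $\lambda\,\delta^{(n-r)/(n-1)}=\sigma^{-(n-1)}\sigma^{n-r}=\sigma^{1-r}$. Hence we should take $\lambda=\sigma^{-n}$ rather than $\sigma^{-(n-1)}$. With $\lambda\coloneq\sigma^{-n}=\delta^{-n/(n-1)}$ the set is exactly $\lambda\, c(T - x_T)$ centred at $\lambda x_T$. So $|\cE_\gamma g_T|^2\gtrsim \sigma\,\bdone_{\lambda cT}$, after replacing $T$ by its concentric shrink $cT$. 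A plate is covered by $O(1)$ translates of its shrinks, so we can reduce to this case, using pigeonholing on the translates and then the triangle inequality.

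\textbf{Step 2 (plugging in).} Applying \eqref{eq: ON rest} with $J=\T$ and using positivity gives
\begin{equation*}
    \sigma\,\Big\|\sum_{T}\bdone_{\lambda cT}\Big\|_{L^{s}}\lesssim (\#\T)^{1/r}.
\end{equation*}
Scaling $x\mapsto\lambda x$ gives $\|\sum\bdone_{\lambda cT}\|_{L^s}=\lambda^{n/s}\|\sum\bdone_{cT}\|_{L^s}$. Therefore
\begin{equation*}
    \Big\|\sum_T\bdone_{cT}\Big\|_{L^s}\lesssim \sigma^{-1}\lambda^{-n/s}(\#\T)^{1/r}=\delta^{\frac{n^2}{(n-1)s}-\frac{1}{n-1}}(\#\T)^{1/r},
\end{equation*}
and this exponent is exactly $e(n,s)$.

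The main obstacle is Step 1: we need the lower bound $|\cE_\gamma g_T|\gtrsim\sigma^{1/2}$ uniformly on the rescaled plate. This requires careful control of the Taylor remainder and of the cutoff $\alpha$, and it requires choosing constants so that the phase variation stays below, say, $1/10$. The remaining steps are bookkeeping.
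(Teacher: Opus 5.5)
Your proof is correct and follows the paper's argument: disjointly supported (hence orthonormal) wave packets of width $\sim\delta^{1/(n-1)}$ about $s_T$, modulated by $e^{-2\pi i \lambda x_T\cdot\gamma(t)}$ with $\lambda=\delta^{-n/(n-1)}$ (the paper's $R$; state this corrected value from the outset rather than first trying $\lambda=\delta^{-1}$), a Taylor expansion showing the phase is $O(c)$ on the rescaled plate, and the dilation $x\mapsto\lambda x$, which produces $\delta^{e(n,s)}$. The only difference is where the small constant sits: the paper shrinks the bump's support to $[-c_\gamma,c_\gamma]$, so the lower bound holds on all of $RT$ at once, whereas you shrink the plate and recover $T$ by covering it with $O(1)$ translated shrinks; this step needs only the triangle inequality (no pigeonholing), because each translated family is again a direction-separated family of $\delta$-plates.
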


Assuming this lemma, it is a straightforward matter to prove Proposition~\ref{prop: nec cond}. 

\begin{proof}[Proof of Proposition~\ref{prop: nec cond}]
    Suppose \eqref{eq: multiplicity I} holds for some choice of $1 \leq r$, $s < \infty$.  

    Let $\T = \{T\}$ consist of a single $\delta$-plate associated to~$\gamma$. In this case, \eqref{eq: multiplicity I} implies that
    \begin{equation*}
        \delta^{n/(2s)} \lesssim_{\gamma} \delta^{e(n, s)}. 
    \end{equation*}
    Taking $\delta \to 0$, we conclude that $q \coloneq 2s$ must satisfy $q \geq n(n+1)$. In light of Lemma~\ref{lem: rest vs Kakeya}, this implies that \eqref{eq: nec cond} can only hold for $q \geq n(n+1)$

  Now $\T$ consist of maximal direction-separated family of $\delta$-plates associated to $\gamma$ satisfying $x_T = 0$ for all $T \in \T$. It follows that there exists some constant $ 0 < c_{\gamma} < 1$ such that $B(0,c_{\gamma} \delta) \subseteq T$ for all $T \in \T$. So
    \begin{equation*}
        \sum_{T \in \T} \bdone_T (x) \geq \#\T \qquad \textrm{for all $x \in B(0, c_{\gamma}\delta)$.}
    \end{equation*}
    Consequently, \eqref{eq: multiplicity I} implies that
    \begin{equation*}
        \delta^{n/s} \#\T \lesssim_{\gamma} \delta^{e(n, s)} (\#\T)^{1/r} \qquad \textrm{where} \qquad \#\T \sim \delta^{-1/(n-1)}. 
    \end{equation*}
    Taking $\delta \to 0$, we conclude that $q \coloneq 2s$ must satisfy $r \leq q/2n$. In light of Lemma~\ref{lem: rest vs Kakeya}, this implies that \eqref{eq: nec cond} can only hold for $r \leq q/2n$.
\end{proof}

\begin{proof}[Proof of Lemma~\ref{lem: rest vs Kakeya}] We assume that the cutoff $\alpha \in C^{\infty}_c(\R)$ is choses so that $0 \leq \alpha \leq 1$, with $\alpha(s) = 1$ for $|s| \leq 3/4$. Let $0 < c_\gamma < 1/4$ be chosen small enough for the purposes of the forthcoming argument, depending only on $n$ and $\gamma$. Fix a family of direction-separated $\delta$-plates $\T$ associated to $\gamma$. Let $\zeta \in C^{\infty}_c(\R)$ be a non-negative bump function supported on $[-c_\gamma, c_\gamma]$ with $\int_{\R} \zeta(t)^2\, \ud t = 1$. Define $R \coloneq \delta^{-n/(n-1)}$ and let
\begin{equation*}
    g_T(s) \coloneq R^{1/(2n)}e^{-2\pi i R x_T \cdot \gamma(s)} \zeta(R^{1/n}(s - s_T)) \qquad \textrm{for all $T \in \T$.}
\end{equation*}
Then $g_T$ is normalised in $L^2(\R)$. Moreover, since $\T$ is direction-separated, it follows that the functions $(g_T)_{T \in \T}$ have pairwise disjoint supports. Consequently, $(g_T)_{T \in \T}$ is an orthonormal fmaily. Thus, assuming \eqref{eq: ON rest} holds, we have
\begin{equation}\label{eq: rest vs Kak 1}
    \left\| \sum_{T \in \T}|\cE_{\gamma} g_T|^2 \right\|_{L^{q/2}(\R^n)} \lesssim \big(\#\T\big)^{1/r}.
\end{equation}
Fixing $T \in \T$, by a change of variables we have
\begin{equation*}
    |\cE g_T(R x)| = R^{-1/(2n)} \left|\int_{\R} e^{2 \pi i R (x - x_T)\cdot (\gamma(s_T + R^{-1/n} s) - \gamma(s_T))} \zeta(s)\,\ud s\right|. 
\end{equation*}
If $x \in T$, then Taylor's theorem implies that
\begin{equation*}
    \big|(x - x_T)\cdot (\gamma(s_T + R^{-1/n} s) - \gamma(s_T))\big| \lesssim_{\gamma} R^{-1}|s|.
\end{equation*}
Provided $c_\gamma > 0$ is chosen sufficienlty small, 
\begin{equation*}
    \cos\Big(2\pi R(x - x_T)\cdot (\gamma(s_T + R^{-1/n} s) - \gamma(s_T)) \Big) \gtrsim 1 \qquad \textrm{for $x \in T$}.
\end{equation*}
Therefore
\begin{equation*}
    |\cE g_T(R x)| \gtrsim R^{-1/(2n)} \int_{\R}  \zeta(s)\,\ud s \gtrsim_{\gamma} R^{-1/(2n)} \qquad \textrm{for $x \in T$.}
\end{equation*}
Thus, by a change of variables, we deduce that
\begin{equation} \label{eq: rest vs Kak 2}
    \left\| \sum_{T \in \T} \bdone_T \right\|_{L^s(\R^n)} \lesssim_{\gamma}  \delta^{e(n, s)}
  \left\|\sum_{T \in \T}|\cE_{\gamma} g_T|^2\right\|_{L^{q/2}(\R^n)}.
\end{equation}
The desired result now follows by combining \eqref{eq: rest vs Kak 1} and \eqref{eq: rest vs Kak 2}. 
\end{proof}




\bibliography{Reference}
\bibliographystyle{amsplain}

\end{document}